\newtheorem{em-deff}{Definition}[section]
\newtheorem{lemma}[em-deff]{Lemma}
\newtheorem{theorem}[em-deff]{Theorem}
\newtheorem{corollary}[em-deff]{Corollary}
\newtheorem{proposition}[em-deff]{Proposition}
\newtheorem{fact}[em-deff]{Fact}
\newtheorem{em-example}[em-deff]{Example}
\newtheorem{claim}[em-deff]{Claim}
\newtheorem{problem}[em-deff]{Problem}
\newtheorem{em-remark}[em-deff]{Remark}
\newtheorem{question}[em-deff]{Question}
\newenvironment{example}{\begin{em-example} \em }{ \end{em-example}}
\newenvironment{remark}{\begin{em-remark} \em }{ \end{em-remark}}
\newenvironment{deff}{\begin{em-deff} \em }{ \end{em-deff}}
\newcommand{\VG}{\mathcal V_{(G,\tau)}(e_G)}
\newcommand{\VH}{\mathcal V_{(H,\sigma)}(e_H)}
\newcommand{\amu}{^{-1}}
\title{Semitopological homomorphisms}
\author{Anna Giordano Bruno \\ {\footnotesize Dipartimento di Matematica e Informatica,
Universit\`{a} di Udine} \\ {\footnotesize Via delle Scienze, 206 - 33100 Udine, Italy} \\ {\footnotesize {\tt anna.giordanobruno@dimi.uniud.it}}}
\date{}
\begin{document}

\maketitle

\begin{abstract}
Inspired by an analogous result of Arnautov about isomorphisms, we prove that all continuous surjective homomorphisms of topological groups $f:G\to H$ can be obtained as restrictions of open continuous surjective homomorphisms $\widetilde f:\widetilde G\to H$, where $G$ is a topological subgroup of $\widetilde G$. In case the topologies on $G$ and $H$ are Hausdorff and $H$ is complete, we characterize continuous surjective homomorphisms $f:G\to H$ when $G$ has to be a dense normal subgroup of $\widetilde G$. 

We consider the general case when $G$ is requested to be a normal subgroup of $\widetilde G$, that is when $f$ is \emph{semitopological} --- Arnautov gave a characterization of semitopological isomorphisms internal to the groups $G$ and $H$. In the case of homomorphisms we define new properties and consider particular cases in order to give similar internal conditions which are sufficient or necessary for $f$ to be semitopological. Finally we establish a lot of stability properties of the class of all semitopological homomorphisms.
\end{abstract}

\bigskip
\noindent 2000 Mathematics Subject Classification: Primary 22A05, 54H11; Secondary: 18A20, 20F38, 20K45.\\
Key words and phrases: topological group, semitopological homomorphism, open homomorphism, SIN group, open mapping theorem.

\section{Introduction}

In \cite[Theorem 1]{Ar} Arnautov showed that for every continuous isomorphism $f:(G,\tau)\to(H,\sigma)$ of topological  groups, there exist a topological group $(\widetilde G,\widetilde\tau)$ containing $G$ as a topological subgroup and an open continuous homomorphism $\widetilde f:(\widetilde G,\widetilde\tau)\to (H,\sigma)$ extending $f$. Moreover he noted that such a pair $(\widetilde G,\widetilde f)$ needs not always exist under the additional assumption that $G$ is a normal subgroup of $\widetilde G$.
So in \cite[Definition 2]{Ar} the author defined a continuous isomorphism $f:(G,\tau)\to (H,\sigma)$ of topological groups to be \emph{semitopological} if there exist a topological group $(\widetilde G,\widetilde\tau)$ containing $G$ as a topological normal subgroup and an open continuous homomorphism $\widetilde f:(\widetilde G,\widetilde\tau)\to (H,\sigma)$ extending $f$ (the counterpart of this definition for topological rings was given in \cite{Ar0}).

If $f:G\to H$ is a group homomorphism, denote by $\Gamma_f$ the \emph{graph} of $f$, that is the subgroup $\Gamma_f=\{(g,f(g)):g\in G\}$ of $G\times H$.
Using explicitly the graph of a continuous surjective group homomorphism $f:G\to H$ we can prove \cite[Theorem 1]{Ar} in a much simpler way and even generalize it weakening the hypothesis on $f$ (in our theorem $f$ is a homomorphism while in \cite[Theorem 1]{Ar} $f$ was an isomorphism) and achieving $G$ to be a \emph{closed} subgroup of $\widetilde G$, in case the topology on the codomain is Hausdorff. 

\begin{theorem}\label{ext}
Let $G$, $H$ be topological groups and $f:(G,\tau)\to (H,\sigma)$ a continuous surjective homomorphism. Then there exist a topological group $(\widetilde G,\widetilde\tau)$ containing $(G,\tau)$ as a topological subgroup and an open continuous homomorphism $\widetilde f:(\widetilde G,\widetilde\tau)\to (H,\sigma)$ such that $\widetilde f\restriction_G=f$. If $(H,\sigma)$ is Hausdorff, then $(G,\tau)$ is closed in $(\widetilde G,\widetilde\tau)$.
\end{theorem}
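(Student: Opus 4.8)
The plan is to take $\widetilde G$ to be the direct product $G\times H$ equipped with the product topology $\widetilde\tau=\tau\times\sigma$, to identify $(G,\tau)$ with the graph $\Gamma_f$ sitting inside it, and to let $\widetilde f$ be the projection onto the second coordinate. Everything then reduces to two standard facts about products: that a product projection is open and surjective, and that the graph of a continuous map into a Hausdorff space is closed.

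Concretely, first I would consider the map $\iota\colon G\to G\times H$, $\iota(g)=(g,f(g))$. Its two coordinate functions $g\mapsto g$ and $g\mapsto f(g)$ are continuous, so $\iota$ is continuous; it is plainly an injective group homomorphism with image $\Gamma_f$; and its inverse $\Gamma_f\to G$ is the restriction to $\Gamma_f$ of the first projection $G\times H\to G$, hence continuous. Thus $\iota$ is a topological isomorphism of $(G,\tau)$ onto the topological subgroup $\Gamma_f$ of $(G\times H,\widetilde\tau)$, and I identify $G$ with $\Gamma_f$ via $\iota$. Next, let $\widetilde f\colon G\times H\to H$ be $\widetilde f(g,h)=h$. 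This is a continuous, open and surjective homomorphism (openness and surjectivity of a product projection onto a factor are automatic), and $\widetilde f(\iota(g))=f(g)$ for every $g\in G$, i.e. $\widetilde f\restriction_G=f$ after the identification. This already yields the first assertion; note that the surjectivity hypothesis on $f$ is not even needed for this part.

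For the last assertion, suppose $(H,\sigma)$ is Hausdorff; I must show $\Gamma_f$ is closed in $G\times H$. Consider $\phi\colon G\times H\to H$, $\phi(g,h)=f(g)h^{-1}$: it is continuous, being $(g,h)\mapsto(f(g),h)$ followed by the continuous map $H\times H\to H$, $(a,b)\mapsto ab^{-1}$. Since $\phi(g,h)=e_H$ exactly when $f(g)=h$, we get $\Gamma_f=\phi^{-1}(e_H)$, which is closed because $\{e_H\}$ is closed in the Hausdorff group $H$.

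The construction is elementary, so I do not anticipate a genuine obstacle; the only two points deserving a little care are the verification that $\iota$ is a homeomorphism onto its image (this is what makes $G$ a bona fide topological subgroup of $\widetilde G$, not merely an abstract one) and the closedness of the graph, which is the group-theoretic reformulation of the classical fact just mentioned.
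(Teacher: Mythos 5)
Your proposal is correct and follows essentially the same route as the paper: take $\widetilde G=G\times H$ with the product topology, identify $(G,\tau)$ with the graph $\Gamma_f$ via $g\mapsto(g,f(g))$, let $\widetilde f$ be the second projection, and use closedness of the graph of a continuous map into a Hausdorff group for the last assertion. The only difference is that you spell out the verifications that the paper delegates to a cited remark and to the closed graph theorem.
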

\begin{proof}
Define $\widetilde G=G\times H$ and $\widetilde\tau=\tau\times\sigma$. It follows from \cite[Remark 2.12]{DGM} that $(G,\tau)$ is topologically isomorphic to $(\Gamma_f,\widetilde\tau\restriction_{\Gamma_f})$ (the topological isomorphism is $j:G\to\Gamma_f$ defined by $j(x)=(x,f(x))$ for every $x\in G$). The canonical projection $p_2:(\widetilde G,\widetilde\tau)\to (H,\sigma)$ is an open continuous homomorphism extending $f$ and so we take $\widetilde f=p_2$. If $(H,\sigma)$ is Hausdorff, $(\Gamma_f,\widetilde\tau\restriction_{\Gamma_f})$ is a closed subgroup of $(\widetilde G,\widetilde\tau)$ by the closed graph theorem.
\end{proof}

Inspired by this result we give the counterpart of \cite[Definition 2]{Ar} for continuous surjective homomorphisms:

\begin{deff}\label{semitopdef}
A continuous surjective homomorphism $f:(G,\tau)\to (H,\sigma)$ of topological groups is \emph{semitopological} if there exist a topological group $(\widetilde G,\widetilde\tau)$ containing $G$ as a topological normal subgroup and an open continuous homomorphism $\widetilde f:(\widetilde G,\widetilde\tau)\to (H,\sigma)$ extending $f$.
\end{deff}

It is obvious that all open continuous surjective group homomorphisms are semitopological.

\medskip
Theorem \ref{ext} shows that every continuous surjective homomorphism of topological groups $f:G\to H$ is the restriction of an open continuous surjective homomorphism of topological groups $\widetilde f:\widetilde G\to H$, where $G$ is a topological subgroup of $\widetilde G$. As noted before not every continuous surjective group homomorphism is the restriction of an open continuous surjective group homomorphism to a normal topological subgroup of the domain, i.e. not all continuous surjective group homomorphisms are semitopological. Consequently the characterization of semitopological homomorphisms can also be viewed as the study of the restrictions of open continuous surjective homomorphisms of topological groups $\widetilde f:\widetilde G\to H$ to normal subgroups $G$ of $\widetilde G$ such that $\widetilde f(G)=H$. From this point of view semitopological homomorphisms are strictly related to the open mapping theorem and its generalizations, which are studied by a lot of authors.

\medskip
Let $\mathcal S$ be the class of all semitopological homomorphisms and $\mathcal S_i$ the class of all semitopological isomorphisms. Obviously $\mathcal S_i\subseteq \mathcal S$.

\medskip
To formulate the main theorem of \cite{Ar} characterizing semitopological isomorphisms, we recall the following concept: for a neighborhood $U$ of the neutral element $e_G$ of a topological group $G$ call a subset $M$ of $G$ \emph{(left) $U$-thin} if $\bigcap\{x^{-1}U x:x\in M\}$ is still a neighborhood of $e_G$. We give some properties of $U$-thin sets in \S\ref{Uthin}.

For a topological group $(G,\tau)$ we denote by $\mathcal V_{(G,\tau)}(e_G)$ the filter of all neighborhoods of $e_G$ in $(G,\tau)$.

\begin{theorem}\label{semitop}\emph{\cite[Theorem 4]{Ar}}
Let $(G,\tau),(H,\sigma)$ be topological groups and $f:(G,\tau)\to(H,\sigma)$ a continuous isomorphism. Then $f$ is semitopological if and only if for every $U\in\mathcal V _{(G,\tau)}(e_G)$
\begin{itemize}
\item[(a)]there exists $V\in\mathcal V_{(H,\sigma)}(e_H)$ such that $f^{-1}(V)$ is $U$-thin;
\item[(b)]for every $g\in G$ there exists $V_g\in\mathcal V_{(H,\sigma)}(e_H)$ such that $[g,f^{-1}(V_g)]\subseteq U$.
\end{itemize}
\end{theorem}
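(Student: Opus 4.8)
The statement to prove is Arnautov's characterization of semitopological isomorphisms (Theorem \ref{semitop}), so I would attack both implications separately, with the forward direction being the straightforward one and the converse being the real work.

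For the *necessity* of (a) and (b): assume $f:(G,\tau)\to(H,\sigma)$ is semitopological, so there is a topological group $(\widetilde G,\widetilde\tau)$ in which $G$ sits as a normal subgroup and an open continuous $\widetilde f:\widetilde G\to H$ extending $f$. Fix $U\in\VG$. By continuity of multiplication in $\widetilde G$ we can pick a symmetric $\widetilde U\in\mathcal V_{(\widetilde G,\widetilde\tau)}(e_{\widetilde G})$ with $\widetilde U^3\cap G\subseteq U$ (or whatever small power is needed); since $G$ is a \emph{topological} subgroup, $\widetilde U\cap G$ is a $\tau$-neighborhood of $e_G$ and may be shrunk inside $U$. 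Because $\widetilde f$ is open, $V:=\widetilde f(\widetilde U)$ is a neighborhood of $e_H$, and then $f^{-1}(V)=\widetilde f\amu(V)\cap G\subseteq \widetilde U\cdot\ker\widetilde f\cap G$; the point is that $f^{-1}(V)$ is contained in a $\widetilde U$-thin-in-$\widetilde G$ set, and intersecting the conjugates with $G$ and using $\widetilde U\cap G\subseteq U$ gives that $f^{-1}(V)$ is $U$-thin in $(G,\tau)$ — this yields (a). For (b), fix $g\in G$; continuity of the commutator map $(x,y)\mapsto[g,y]$ in $\widetilde G$ at $(g,e_{\widetilde G})$ produces $\widetilde U$ with $[g,\widetilde U]\subseteq\widetilde U_0\cap G\subseteq U$, and applying openness of $\widetilde f$ to $\widetilde U$ gives $V_g=\widetilde f(\widetilde U)$ with $[g,f\amu(V_g)]\subseteq[g,\widetilde U\cdot\ker\widetilde f]\subseteq U$, using normality of $G$ and that $g$ centralizes $\ker\widetilde f\cap$(appropriate piece) up to the chosen neighborhood — the details amount to bookkeeping with commutator identities.

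For the *sufficiency* — the hard direction — I would build $\widetilde G$ and $\widetilde\tau$ by hand. The natural candidate for the underlying group is $\widetilde G=G\rtimes_{?}H$ or, more honestly, a pushout/fibered construction: one wants $H$ to act as ``translations'' so that $\widetilde f$ becomes open. Concretely I expect to take the abstract group $\widetilde G$ generated by $G$ together with a copy of $H$ (or of a right transversal of $f$), with $G$ normal, and to topologize it by declaring a neighborhood base at $e_{\widetilde G}$ to consist of sets of the form $W(U,V)=\{$products controlled by $U\in\VG$ and the preimages $f\amu(V)\}$. Conditions (a) and (b) are exactly what is needed to verify that this family is a filter base satisfying the standard axioms (Pontryagin's criteria) for a group topology: (a) guarantees that the would-be basic neighborhoods are thin enough that $x\amu W x$ again contains a basic neighborhood (the conjugation axiom), while (b) handles conjugation by the fixed elements $g\in G$ that generate the ``old'' part. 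One then checks $G$ is a topological \emph{normal} subgroup (its relative topology is $\tau$ precisely because $W(U,V)\cap G$ is cofinal with $U$), that $\widetilde f$ (the homomorphism killing the $H$-part appropriately, or projecting onto $H$) is continuous, and that it is open because the image of $W(U,V)$ contains $V$ by construction.

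The main obstacle will be the sufficiency direction, and within it the verification that the proposed neighborhood filter at $e_{\widetilde G}$ actually satisfies the neighborhood axioms for a \emph{group} topology on the correct abstract group — in particular getting the normality of $G$ in $\widetilde G$ to cooperate with the requirement $\widetilde f\restriction_G=f$. This is the step where conditions (a) (thinness) and (b) (commutators with fixed elements) are used essentially and non-trivially; everything else (continuity and openness of $\widetilde f$, that $G$ carries its original topology) should follow once the topology is shown to be well-defined. I would isolate the construction of the group topology as a lemma, prove the filter-base axioms there, and then assemble the theorem.
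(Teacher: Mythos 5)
Your proposal is correct and follows essentially the same route as the paper: necessity by pulling neighborhoods through the open extension $\widetilde f$ and writing $\widetilde f^{-1}(V)=\widetilde U\cdot\ker\widetilde f$ (this is Theorem \ref{semitophom} specialized to isomorphisms, where $\ker\widetilde f$ in fact centralizes $G$ exactly), and sufficiency by topologizing the direct product $G\times H$ with the base $W(U,V)=\{(u f^{-1}(v),v):u\in U,\ v\in V\}$, using (a) and (b) to verify the group-topology axioms and taking $\widetilde f(g,h)=f(g)$ (this is Theorem \ref{c*-section} with section $s=f^{-1}$ together with Lemma \ref{alpha_group-topology}). The paper merely packages the two directions as the more general statements that semitopological implies A-open and that strongly A$^*$-open implies semitopological for surjective homomorphisms, which reduce to your argument when $f$ is an isomorphism.
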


In this case of continuous isomorphisms it is possible to consider without loss of generality the same group $G$ as domain and codomain, endowed with two different group topologies $\tau\geq\sigma$ and as continuous isomorphism the identity map $1_G$ of $G$. In fact, if $f:(G,\tau)\to (H,\eta)$ is a continuous isomorphism of topological groups, then the topology $\sigma=f^{-1}(\eta)$ on $G$ is coarser than $\tau$ and so $1_G:(G,\tau)\to (G,\sigma)$ is a continuous isomorphism and $(G,\sigma)$ is topologically isomorphic to $(H,\eta)$. In particular $1_G:(G,\tau)\to(G,\sigma)$ is semitopological if and only if $f:(G,\tau)\to(H,\eta)$ is semitopological. So the following is an equivalent form of Theorem \ref{semitop}.

\begin{theorem}
Let $G$ be a group and $\tau\geq\sigma$ group topologies on $G$. Then $1_G:(G,\tau)\to(G,\sigma)$ is semitopological if and only if for every $U\in\mathcal V _{(G,\tau)}(e_G)$
\begin{itemize}
\item[(a)]there exists $V\in\mathcal V_{(G,\sigma)}(e_G)$ such that $V$ is $U$-thin;
\item[(b)]for every $g\in G$ there exists $V_g\in\mathcal V_{(G,\sigma)}(e_G)$ such that $[g,V_g]\subseteq U$.
\end{itemize}
\end{theorem}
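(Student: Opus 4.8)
The plan is to derive this equivalent form directly from Theorem \ref{semitop} using the reduction already sketched in the paragraph preceding the statement, rather than re-proving the characterization from scratch.

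First I would observe that the map $1_G:(G,\tau)\to(G,\sigma)$ is indeed a continuous isomorphism precisely because $\tau\geq\sigma$, so Theorem \ref{semitop} applies to it with the roles $(G,\tau)$ as domain and $(G,\sigma)$ as codomain, and with $f=1_G$, $e_H=e_G$. Then I would simply substitute $f=1_G$ into conditions (a) and (b) of Theorem \ref{semitop}. Since $1_G^{-1}(V)=V$ for any subset $V\subseteq G$, condition (a) of Theorem \ref{semitop} --- there exists $V\in\VH$ with $f^{-1}(V)$ being $U$-thin --- becomes exactly: there exists $V\in\mathcal V_{(G,\sigma)}(e_G)$ such that $V$ is $U$-thin. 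Likewise, $1_G^{-1}(V_g)=V_g$, so condition (b) --- for every $g\in G$ there exists $V_g\in\VH$ with $[g,f^{-1}(V_g)]\subseteq U$ --- becomes: for every $g\in G$ there exists $V_g\in\mathcal V_{(G,\sigma)}(e_G)$ such that $[g,V_g]\subseteq U$. This is a purely notational translation.

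Conversely, the same substitution shows that the two conditions stated here imply conditions (a) and (b) of Theorem \ref{semitop} for $f=1_G$, so by that theorem $1_G:(G,\tau)\to(G,\sigma)$ is semitopological. Hence the stated equivalence holds. (If one prefers, one can also remark that, by the discussion before the statement, every continuous isomorphism $f:(G,\tau)\to(H,\eta)$ is semitopological if and only if $1_G:(G,\tau)\to(G,f^{-1}(\eta))$ is, and $f^{-1}(\eta)\leq\tau$; so the present theorem is not merely a special case of Theorem \ref{semitop} but is in fact equivalent to it.)

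I do not expect any genuine obstacle here: the only thing to be careful about is to state explicitly that $f^{-1}(V)=V$ when $f=1_G$, so that the translation of the $U$-thin condition and of the commutator condition is literally an identity of sets, and to note that $\mathcal V_{(G,\sigma)}(e_G)$ plays the role of $\VH$ under the identification of $(H,\sigma)$ with $(G,\sigma)$. The result is therefore immediate from Theorem \ref{semitop}.
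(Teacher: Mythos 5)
Your proposal is correct and matches the paper's intent exactly: the paper presents this statement as an immediate reformulation of Theorem \ref{semitop} via the substitution $f=1_G$ (so $f^{-1}(V)=V$), together with the preceding reduction showing that any continuous isomorphism can be replaced by the identity map between two topologies on the same group. Nothing further is needed.
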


\medskip
The aim of this paper is to generalize these and most of the remaining results of \cite{Ar} to semitopological homomorphisms. To find a complete characterization of semitopological homomorphisms has turned out to be a non-trivial problem.

\bigskip
However we find a complete characterization in a particular case, which is interesting on its own: as said before semitopological homomorphisms can also be viewed as restrictions $f:G\to H$ of open continuous surjective homomorphisms of topological groups $\widetilde f:\widetilde G\to H$ where $G$ is a normal subgroup of $\widetilde G$ and $f(G)=H$. In this setting one can ask $G$ to be a \emph{dense} normal subgroup of $\widetilde G$:

\begin{deff}
Let $(G,\tau),(H,\sigma)$ be topological groups. A continuous surjective homomorphism $f:(G,\tau)\to(H,\sigma)$ is \emph{d-semitopological} if there exist a topological group $(\widetilde G,\widetilde\tau)$ containing $G$ as a dense normal topological subgroup and an open continuous homomorphism $\widetilde f:(\widetilde G,\widetilde\tau)\to(H,\sigma)$ extending $f$.
\end{deff}

In \S \ref{dense_extensions} we study d-semitopological homomorphisms. In this case considering Hausdorff group topologies simplify the problem and in fact, with the additional assumption that the codomain is complete, we find a complete characterization of d-semitopological homomorphisms in Theorem \ref{d-semitop}. For a subgroup $H$ of a group $G$, $N_G(H)=\{x\in G:x H=H x\}$ is the normalizer of $H$ in $G$. If $G$ is a Hausdorff topological group, we denote by $\overline G$ the two-sided completion of $G$.

\begin{theorem}\label{d-semitop}
Let $f:G\to H$ be a continuous surjective homomorphism, where $G$ is a Hausdorff group and $H$ is a complete group, and let $\overline f:\overline G\to \overline H=H$ be the extension of $f$ to the completions. Then the following conditions are equivalent:
\begin{itemize}
\item[(a)]$f$ is d-semitopological;
\item[(b)]$\overline f\restriction_{N_{\overline G}(G)}:N_{\overline G}(G)\to H$ is open;
\item[(c)]$N_{\overline G}(G)\cap \ker \overline f$ is dense in $\ker\overline f$.
\end{itemize}
\end{theorem}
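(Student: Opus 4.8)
The plan is to prove the implications (b)$\Rightarrow$(a), (a)$\Rightarrow$(b), (b)$\Rightarrow$(c), (c)$\Rightarrow$(b). Throughout, $\overline f\colon\overline G\to H$ is the (unique continuous) extension of $f$; it is surjective since $\overline f(\overline G)\supseteq\overline f(G)=f(G)=H$. Write $K=N_{\overline G}(G)$ and $\overline N=\ker\overline f$; because $H$ is Hausdorff, $\overline N$ is a closed normal subgroup of $\overline G$, and because $G$ is dense in $\overline G$ and $G\le K\le\overline G$, also $K$ is dense in $\overline G$. I would first isolate the elementary fact: \emph{if $\varphi\colon A\to B$ is a homomorphism of topological groups and $D\le A$ is a subgroup with $\varphi(D)=B$ and $\varphi\restriction_D\colon D\to B$ open, then $\varphi$ is open} --- indeed for open $O\ni e_A$ the set $O\cap D$ is a neighbourhood of $e_A$ in $D$, so $\varphi(O\cap D)$ is open in $B$ and contained in $\varphi(O)$, whence $\varphi$ is open at $e_A$ and therefore open.

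For (b)$\Rightarrow$(a) one simply takes $\widetilde G=K=N_{\overline G}(G)$: then $G$ is a topological subgroup of $\widetilde G$, dense (being dense in $\overline G$), normal in $\widetilde G$ by definition of the normalizer, and $\widetilde f:=\overline f\restriction_K$ is an open (by (b)) continuous homomorphism extending $f$. For (a)$\Rightarrow$(b), start from a d-semitopological witness $(\widetilde G_0,\widetilde f_0)$ and reduce to the Hausdorff case: the closure $M$ of $\{e\}$ in $\widetilde G_0$ is a closed normal subgroup lying inside every neighbourhood of $e$, so $M\cap G=\{e\}$ as $G$ is Hausdorff; the quotient map $q\colon\widetilde G_0\to\widetilde G_0/M$ restricts to a topological isomorphism of $G$ onto the dense normal subgroup $q(G)$ of the Hausdorff group $\widetilde G_0/M$; and, $H$ being Hausdorff, $M\subseteq\ker\widetilde f_0$, so $\widetilde f_0$ factors through an open continuous homomorphism $\widetilde G_0/M\to H$ extending $f$. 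Thus we may assume $\widetilde G_0$ Hausdorff. Since the two-sided uniformity of a topological subgroup is the restriction of the ambient one, $G$ dense in the Hausdorff group $\widetilde G_0$ identifies $\widetilde G_0$ with a subgroup between $G$ and $\overline G$; then $\widetilde f_0=\overline f\restriction_{\widetilde G_0}$ by uniqueness of continuous extensions into the Hausdorff $H$, and $G$ normal in $\widetilde G_0$ forces $\widetilde G_0\subseteq K$. As $\overline f\restriction_{\widetilde G_0}$ is open with image $H$ and $\widetilde G_0\le K$, the elementary fact (with $A=K$, $D=\widetilde G_0$, $\varphi=\overline f\restriction_K$) gives that $\overline f\restriction_K$ is open, i.e. (b).

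For (b)$\Rightarrow$(c), assume $\overline f\restriction_K$ open (hence $\overline f$ open, by the elementary fact), fix $n\in\overline N$ and a neighbourhood $W$ of $n$, and look for a point of $K\cap\overline N$ in $W$. By continuity of multiplication and of conjugation by $n$ there is an open symmetric neighbourhood $W_0$ of $e$ so that $v^{-1}k\in W$ whenever $v\in K\cap W_0$ and $k\in K$ with $n^{-1}k\in W_0$; density of $K$ gives such a $k$, the element $h_0:=\overline f(k)=\overline f(n^{-1}k)$ lies in $\overline f(W_0)$, and shrinking $W_0$ so that $\overline f(W_0)\subseteq\overline f(K\cap W_0)$ (possible as $\overline f\restriction_K$ is open) yields $v\in K\cap W_0$ with $\overline f(v)=h_0$; then $v^{-1}k\in K\cap\overline N$ and $v^{-1}k\in W$. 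For (c)$\Rightarrow$(b), assume $K\cap\overline N$ dense in $\overline N$; it suffices to show $\overline f(O\cap K)$ is a neighbourhood of $e_H$ for each open $O\ni e$ in $\overline G$. Fix open neighbourhoods $O\supseteq O_1\supseteq O_2$ of $e$ with $\overline{O_1}\subseteq O$. One shows that the closure of $f(O_2\cap G)$ in $H$ contains a neighbourhood of $e_H$ and conversely that each of its points $h$ has an $\overline f$-preimage inside $O\cap K$: using density of $f(O_2\cap G)$ in that closure together with completeness of $\overline G$ one builds a Cauchy net in $O_2\cap G$ whose $\overline f$-images converge to $h$, converging in $\overline G$ to some $x\in\overline{O_1}\subseteq O$ with $\overline f(x)=h$; then the density of $K\cap\overline N$ in $\overline N$ is used to move $x$, within its $\overline N$-coset and without leaving $O$, to a point of $K$.

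The genuine difficulty I expect is (c)$\Rightarrow$(b), precisely in the last two steps: (i) producing, over a prescribed $h$ near $e_H$, a preimage $x$ that actually lies in the prescribed $O$ --- this forces one to build the approximating net in $O_2\cap G$ simultaneously $\overline f$-convergent to $h$ and Cauchy in $\overline G$, balancing the (generally non-open) behaviour of $f$ against the completeness of $\overline G$; and (ii) correcting $x$ into $K$ using only the coset $x\overline N$ and the hypothesis that $K\cap\overline N$ is dense in $\overline N$, which is exactly the place where (c) is indispensable. By contrast, (a)$\Leftrightarrow$(b) is essentially formal once one recognizes $N_{\overline G}(G)$ as the canonical largest candidate for $\widetilde G$ between $G$ and $\overline G$, and (b)$\Rightarrow$(c) is a routine thinning-and-correction argument with the open map $\overline f\restriction_K$.
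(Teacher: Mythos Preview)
Your (a)$\Leftrightarrow$(b) and (b)$\Rightarrow$(c) are correct and essentially match the paper. The paper proves (b)$\Rightarrow$(a) exactly as you do, and its (a)$\Rightarrow$(c) (Claim~\ref{d-semitop_claim}) is your (a)$\Rightarrow$(b) argument: pass to a Hausdorff d-extension, identify it with a subgroup of $\overline G$ lying inside $N_{\overline G}(G)$, and transfer openness upward.

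The main divergence is in (b)$\Leftrightarrow$(c). The paper does not argue this by hand; it simply cites Grant's lemma (Fact~\ref{open_mapping_2}, from \cite[Lemma~4.3.2]{DPS}): for an \emph{open} continuous surjection $\varphi:A\to B$ of Hausdorff groups and a dense $N\le A$, one has $\varphi\restriction_N$ open iff $N\cap\ker\varphi$ is dense in $\ker\varphi$. With $A=\overline G$, $\varphi=\overline f$, $N=N_{\overline G}(G)$ this is (b)$\Leftrightarrow$(c) at once. Your direct arguments amount to a re-proof of this lemma in the case at hand, which buys nothing here.

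Your sketch of (c)$\Rightarrow$(b) has a genuine gap, located exactly where Grant's lemma needs its standing hypothesis that $\overline f$ be open. You assert that ``the closure of $f(O_2\cap G)$ in $H$ contains a neighbourhood of $e_H$,'' but this is neither justified nor a consequence of (c). Take $G=(\mathbb R,\delta_{\mathbb R})$, $H=\mathbb R$ with the usual topology, and $f=1_{\mathbb R}$: then $\overline G=G$, $\ker\overline f=\{0\}$ and (c) holds trivially, yet with $O_2=\{0\}$ one gets $\overline{f(O_2\cap G)}=\{0\}$, which is no neighbourhood --- and indeed (b) fails in this example. Your Cauchy-net construction therefore cannot get started, and the subsequent ``correction into $K$'' step is moot. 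Observe that the paper's invocation of Fact~\ref{open_mapping_2} for (c)$\Rightarrow$(b) likewise rests on $\overline f$ being open; the ``equivalent form'' of the theorem stated immediately after the proof makes this hypothesis explicit by assuming $h:K\to H$ open.
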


\bigskip
In \S \ref{characterization} we find necessary and sufficient conditions for a continuous surjective group homomorphism to be semitopological. To do this we introduce new properties. For example we define A-open and strongly A$^*$-open surjective group homomorphisms (see Definitions \ref{A-open_def} and \ref{A*-open_def} respectively) and without any recourse to Arnautov's main theorem (i.e. Theorem \ref{semitop}) we prove that \begin{center}strongly A$^*$-open $\Rightarrow$ semitopological $\Rightarrow$ A-open\end{center} for continuous surjective group homomorphisms (see Theorems \ref{semitophom} and \ref{c*-section}). Since strongly A$^*$-open coincides with A-open for continuous group isomorphisms as well as with conditions (a) and (b) of Theorem \ref{semitop}, Theorem \ref{semitop} is an immediate corollary of this result.
In \S \ref{characterization} we define also A$^*$-open and strongly A-open surjective group homomorphisms (see Definitions \ref{A*-open_def} and \ref{strongly_A-open_def} respectively). Also these properties are equivalent to semitopological for continuous group isomorphisms. The main relations among new and already defined properties for continuous surjective group homomorphisms are the following:
\begin{equation}\label{first_diagram}
\xymatrix{
\text{open} \ar@{=>}[d] & \text{strongly A$^*$-open} \ar@{=>}[dr] \ar@{=>}[dl] & \text{strongly A-open} \ar@{=>}[d]\\
\text{semitopological} \ar@{=>}[dr] & & \text{A$^*$-open} \ar@{=>}[dl] \\
& \text{A-open} &
}
\end{equation}
Moreover in Theorem \ref{initial->all_equivalent} we prove that all these four properties are equivalent to semitopological for a continuous surjective homomorphism of topological groups $f:G\to H$ such that $\ker f$ is contained in the closure of $e_G$ in $G$. This assumption is quite strong, but continuous isomorphisms satisfy it and so we find again Theorem \ref{semitop} as a trivial corollary.

In order to find some property equivalent to semitopological, we analyze the relations among all the properties in (\ref{first_diagram}). Some questions about these relations remain open and a positive answer to them would bring a more precise description of semitopological homomorphisms and their properties.

\medskip
As a particular case of Theorem \ref{semitop}, a continuous isomorphism of topological groups $f:(G,\delta_G)\to(H,\sigma)$, where $\delta_G$ is the discrete topology on $G$, is semitopological if and only if the centralizer $c_H(h)=\{x\in H:[x,h]=e_H\}$ of $h$ in $H$ is $\sigma$-open for every $h\in H$ \cite[Corollary 5]{Ar} (see Corollary \ref{discrhom}). In \S \ref{particular} we consider this particular case for homomorphisms, but we also prove some results in case the topology on the codomain is indiscrete.

Moreover we consider continuous surjective group homomorphisms with the properties in (\ref{first_diagram}) in case the topologies on the domain and on the codomain are very close to be discrete or indiscrete.

In some of these particular cases we find examples and counterexamples clarifying the relations among the properties in (\ref{first_diagram}). This can give an indication of which direction we have to take.

\medskip
Thanks to the characterization of Theorem \ref{semitop} Arnautov proved that $\mathcal S_i$ is stable under taking subgroups, quotients and products:

\begin{theorem}\label{subgroupst}\label{quotientst}
Let $(G,\tau),(H,\sigma)$ be topological groups, $f:(G,\tau)\to(H,\sigma)$ a semitopological isomorphism and $A$ a subgroup of $G$. Then:
\begin{itemize}
	\item[(a)]\emph{\cite[Theorem 7]{Ar}} $f\restriction_A:(A,\tau\restriction_A)\to (f(A),\sigma\restriction_{f(A)})$ is semitopological;
	\item[(b)]\emph{\cite[Theorem 8]{Ar}} in case $A$ is normal and $q:G\to G/A$, $q':H\to H/f(A)$ are the canonical projections, $f':(G/A,\tau_q)\to (H/f(A),\sigma_q)$, defined by $f'(q(g))=q'(f(g))$ for every $g\in G$, is semitopological.
\end{itemize}
\end{theorem}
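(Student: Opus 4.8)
The plan is to derive both items directly from Arnautov's characterization, Theorem~\ref{semitop}. The first observation is that the maps in question are again continuous isomorphisms: $f\restriction_A\colon(A,\tau\restriction_A)\to(f(A),\sigma\restriction_{f(A)})$ is one because $f$ is injective, and $f'\colon(G/A,\tau_q)\to(H/f(A),\sigma_q)$ is too --- $f(A)$ is normal in $H$ since $f$ is an isomorphism, $f'$ is a well-defined bijective homomorphism because $f$ maps $A$ onto $f(A)$, and it is continuous because $q'\circ f$ is continuous and $\tau_q$ is the quotient topology. Hence Theorem~\ref{semitop} applies to both maps, and it suffices to verify its conditions (a) and (b) in each case. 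Throughout I would use two standard facts: that the canonical projections $q,q'$ are open, so that $\{q(U):U\in\mathcal V_{(G,\tau)}(e_G)\}$ and $\{q'(V):V\in\mathcal V_{(H,\sigma)}(e_H)\}$ are neighborhood bases at the identities of $G/A$ and $H/f(A)$ while $\mathcal V_{(A,\tau\restriction_A)}(e_A)=\{U\cap A:U\in\mathcal V_{(G,\tau)}(e_G)\}$; and that restriction to $A$ and passage to $G/A$ are homomorphisms, hence commute with conjugation and with forming commutators.

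For (a), I would fix $U'\in\mathcal V_{(A,\tau\restriction_A)}(e_A)$, pick $U\in\mathcal V_{(G,\tau)}(e_G)$ with $U\cap A=U'$, and apply Theorem~\ref{semitop} to $f$ with the neighborhood $U$ to obtain $V\in\mathcal V_{(H,\sigma)}(e_H)$ with $f^{-1}(V)$ $U$-thin and, for each $a\in A$, a neighborhood $V_a\in\mathcal V_{(H,\sigma)}(e_H)$ with $[a,f^{-1}(V_a)]\subseteq U$. The candidate witnesses for $f\restriction_A$ are $V\cap f(A)$ and $V_a\cap f(A)$, whose preimages under $f\restriction_A$ are $f^{-1}(V)\cap A$ and $f^{-1}(V_a)\cap A$. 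Condition (b) is then immediate: $[a,f^{-1}(V_a)\cap A]\subseteq[a,f^{-1}(V_a)]\cap A\subseteq U\cap A=U'$. For condition (a) the key point is the elementary identity $x^{-1}Ux\cap A=x^{-1}(U\cap A)x=x^{-1}U'x$, valid whenever $x\in A$, which shows that the neighborhood $A\cap\bigcap\{x^{-1}Ux:x\in f^{-1}(V)\}$ of $e_A$ is contained in $\bigcap\{x^{-1}U'x:x\in f^{-1}(V)\cap A\}$, so $f^{-1}(V)\cap A$ is $U'$-thin.

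For (b), I would fix $W\in\mathcal V_{(G/A,\tau_q)}(e_{G/A})$, write $W=q(U)$ with $U\in\mathcal V_{(G,\tau)}(e_G)$, and again apply Theorem~\ref{semitop} to $f$ and $U$ to get $V$ and the $V_g$'s. The candidate witnesses are $q'(V)$ and $q'(V_g)$. Using that $f$ is surjective one checks $f^{-1}(Vf(A))=f^{-1}(V)\,A$, whence $(f')^{-1}(q'(V))=q(f^{-1}(V))$ and likewise $(f')^{-1}(q'(V_g))=q(f^{-1}(V_g))$. Condition (b) for $f'$ then reads $[q(g),q(f^{-1}(V_g))]=q([g,f^{-1}(V_g)])\subseteq q(U)=W$. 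For condition (a) one uses $q(x)^{-1}Wq(x)=q(x^{-1}Ux)$ together with openness of $q$: the image $q\bigl(\bigcap\{x^{-1}Ux:x\in f^{-1}(V)\}\bigr)$ is a neighborhood of $e_{G/A}$ contained in $\bigcap\{q(x)^{-1}Wq(x):x\in f^{-1}(V)\}$, so $q(f^{-1}(V))$ is $W$-thin.

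I do not expect a genuine obstacle once Theorem~\ref{semitop} is available; the argument is essentially bookkeeping. The points that deserve care are ensuring that the ``$U$-thin'' witnesses survive restriction and quotient --- which is exactly why one wants $q,q'$ open --- and the two set-theoretic identities $x^{-1}Ux\cap A=x^{-1}(U\cap A)x$ (for $x\in A$) and $f^{-1}(Vf(A))=f^{-1}(V)\,A$. Note that injectivity of $f$ is used essentially, to guarantee that $f\restriction_A$ and $f'$ are themselves isomorphisms so that Theorem~\ref{semitop} may be invoked for them; this is why the argument does not immediately carry over to arbitrary continuous surjective homomorphisms.
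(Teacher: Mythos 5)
Your proof is correct and follows essentially the same route the paper takes: the theorem itself is quoted from Arnautov, but the paper's own generalizations (Theorem \ref{subgroups_stability} for restrictions and Proposition \ref{stabquozhom} for quotients) prove stronger statements by exactly this kind of direct verification of the thin and commutator conditions from the internal characterization, using openness of the canonical projections and the same set-theoretic identities. Specializing those arguments to isomorphisms, where A-open coincides with semitopological by Theorem \ref{semitop}, recovers your proof verbatim.
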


\begin{theorem}\emph{\cite[Theorem 9]{Ar}}\label{productst}
Let $\{(G_i,\tau_i):i\in I\}$, $\{(H_i,\sigma_i):i\in I\}$ be families of topological groups and $f_i:(G,\tau_i)\to(H,\sigma_i)$ a semitopological isomorphism for every $i\in I$. Then $\prod_{i\in I}f_i:\prod_{i\in I}(G,\tau_i)\to\prod_{i\in I}(H,\sigma_i)$ is semitopological.
\end{theorem}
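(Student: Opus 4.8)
The plan is to use Arnautov's internal characterization (Theorem \ref{semitop}) and verify conditions (a) and (b) directly for the product map $f=\prod_{i\in I}f_i$. Write $G=\prod_{i\in I}G_i$ with the product topology $\tau=\prod\tau_i$, and similarly $H=\prod H_i$, $\sigma=\prod\sigma_i$; clearly $f$ is a continuous isomorphism since each $f_i$ is. A basic neighborhood of $e_G$ has the form $U=\prod_{i\in I}U_i$ where $U_i\in\mathcal V_{(G_i,\tau_i)}(e_{G_i})$ for all $i$ and $U_i=G_i$ for all $i$ outside a finite set $F\subseteq I$. It suffices to check (a) and (b) for such basic $U$.

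For condition (a): for each $i\in F$ apply Theorem \ref{semitop}(a) to $f_i$ and $U_i$ to get $V_i\in\mathcal V_{(H_i,\sigma_i)}(e_{H_i})$ such that $f_i^{-1}(V_i)$ is $U_i$-thin in $G_i$, i.e.\ $W_i:=\bigcap\{x^{-1}U_i x:x\in f_i^{-1}(V_i)\}\in\mathcal V_{(G_i,\tau_i)}(e_{G_i})$; for $i\notin F$ set $V_i=H_i$ and $W_i=G_i$. Put $V=\prod_{i\in I}V_i$, a neighborhood of $e_H$ since only finitely many factors are proper. Then $f^{-1}(V)=\prod_i f_i^{-1}(V_i)$, and because conjugation in a product is computed coordinatewise, $\bigcap\{x^{-1}Ux:x\in f^{-1}(V)\}=\prod_i W_i$, which is a neighborhood of $e_G$. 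Hence $f^{-1}(V)$ is $U$-thin. For condition (b): fix $g=(g_i)_{i\in I}\in G$. For each $i\in F$ apply Theorem \ref{semitop}(b) to $f_i$, $g_i$, $U_i$ to obtain $V_{g_i}\in\mathcal V_{(H_i,\sigma_i)}(e_{H_i})$ with $[g_i,f_i^{-1}(V_{g_i})]\subseteq U_i$; for $i\notin F$ set $V_{g_i}=H_i$, so that $[g_i,f_i^{-1}(V_{g_i})]=[g_i,G_i]\subseteq G_i=U_i$. Setting $V_g=\prod_i V_{g_i}\in\mathcal V_{(H,\sigma)}(e_H)$ and using that commutators in a product are coordinatewise, $[g,f^{-1}(V_g)]=\prod_i[g_i,f_i^{-1}(V_{g_i})]\subseteq\prod_i U_i=U$.

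Having verified (a) and (b) for every basic neighborhood $U$ of $e_G$ — and noting that if they hold for a basic $U$ they hold for every larger $U$ — Theorem \ref{semitop} gives that $f=\prod_{i\in I}f_i$ is semitopological. The only mild subtlety, and the point to state carefully, is the reduction to \emph{basic} neighborhoods and the crucial use of the fact that both conjugation and commutators in a direct product are performed coordinatewise; once that is in place, the trick is simply to take $V_i=H_i$ and $W_i=G_i$ on the cofinitely many coordinates where $U_i=G_i$, so that all the chosen neighborhoods $V$, $V_g$ of $e_H$ are genuinely open in the product topology. No genuine obstacle arises; the argument is a routine ``coordinatewise'' verification, which is exactly why Arnautov can state it as a theorem with a short proof.
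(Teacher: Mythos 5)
Your argument is correct, but it takes a genuinely different route from the one this paper uses. Here Theorem \ref{productst} is quoted from Arnautov without proof; what the paper actually proves is the generalization to surjective homomorphisms, Theorem \ref{productst_hom}(a), and the proof there is \emph{external}: choose an A-extension $(\widetilde G_i,\widetilde\tau_i)$ with associated open continuous $\widetilde f_i$ for each $f_i$, and observe that $\prod_{i\in I}\widetilde G_i$ with $\prod_{i\in I}\widetilde f_i$ is an A-extension of $\prod_{i\in I}f_i$, since a product of normal subgroups is normal and a product of open continuous surjections is open and continuous. You instead verify conditions (a) and (b) of the internal characterization (Theorem \ref{semitop}) coordinatewise; this is sound, and is essentially Arnautov's original argument: the reduction to basic neighborhoods is legitimate by Lemma \ref{thin}(d), the identity $\bigcap\{x^{-1}Ux:x\in f^{-1}(V)\}=\prod_i W_i$ holds because $f^{-1}(V)=\prod_i f_i^{-1}(V_i)$ contains every combination of coordinates, and padding with $V_i=H_i$ off the finite set $F$ keeps $V$ and $V_g$ genuine neighborhoods of $e_H$. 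As for what each approach buys: the external construction needs nothing beyond the definition of semitopological and transfers verbatim to surjective homomorphisms, for which no internal characterization is available; your internal route leans on the nontrivial Theorem \ref{semitop} but stays entirely inside $G$ and $H$. One small imprecision: since the paper defines $[g,K]$ as the subgroup \emph{generated} by the commutators, your displayed equality $[g,f^{-1}(V_g)]=\prod_i[g_i,f_i^{-1}(V_{g_i})]$ is in general only the inclusion $\subseteq$ when $I$ is infinite (an element of the right-hand side may need words of unbounded length in the various coordinates, whereas a word in the product has a single finite length); the inclusion is all you use, so nothing breaks.
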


In \S \ref{stability} we prove (categorical) stability properties of the larger class $\mathcal S$ and of the new properties we introduce, extending substantially Theorems \ref{subgroupst} and \ref{productst}. In particular in Theorem \ref{productst_hom}(a) we prove that also $\mathcal S$ is stable under taking products:
\begin{quote}
\emph{If $\{f_i:i\in I\}$ if a family of continuous surjective group homomorphisms, then $\prod_{i\in I}f_i\in\mathcal S$ if and only if $f_i\in\mathcal S$ for all $i\in I$.}
\end{quote}
For (strongly) A-open and (strongly) A$^*$-open we prove that also the converse implication holds. Consequently, since they coincide with semitopological for continuous group isomorphisms, we obtain as a bonus a new result, namely that the converse implication in Theorem \ref{productst} holds true.

About categorical properties, we prove that $\mathcal S_i$ is closed under pullback along continuous injective homomorphisms and that the class of all A-open continuous surjective homomorphisms is stable under pushout with respect to open continuous surjective group homomorphisms (see Theorems \ref{pullback} and \ref{pushout} respectively). In these terms Theorem \ref{subgroupst} says that $\mathcal S_i$ is closed under pullback with respect to topological embeddings and under pushout with respect to open continuous surjective homomorphisms and so it becomes a corollary of our results.

As noted in \cite{Ar} $\mathcal S_i$ is not closed under taking compositions (see Example \ref{almdiscrex}). Hence also (strongly) A-open and (strongly) A$^*$-open are not stable under taking compositions because they are equivalent to semitopological for continuous group isomorphisms. However we prove some results about composition of semitopological homomorphisms and about composition of group homomorphisms which are (strongly) A-open and (strongly) A$^*$-open.
In Corollary \ref{Aop=Aop=semitop} we find a characterization for an A-open continuous surjective group homomorphism in terms of its algebraically associated isomorphism:
\begin{quote}
\emph{If $f:(G,\tau)\to(H,\sigma)$ is a continuous surjective homomorphism and $q:G\to G/\ker f$ the canonical projection, then by the first homomorphism theorem for topological groups $f':(G/\ker f,\tau_q)\to (H,\sigma)$, defined by $f'(q(g))=f(g)$ for every $g\in G$, is a continuous isomorphism. Then $f$ is A-open if and only if $f'$ is semitopological.}
\end{quote}
Note that the first homomorphism theorem for topological groups says that in this case $f$ is open if and only if $f'$ is open. In analogy our result states that $f$ is A-open if and only if $f'$ is A-open (for continuous group isomorphisms A-open coincides with semitopological).

One of the most interesting results is Proposition \ref{strAop-left_canc} which shows that the class of strongly A-open continuous surjective group homomorphisms is left cancelable. As a consequence we get that the class $\mathcal S_i$ is left cancelable (see Theorem \ref{left-canc}).

\medskip
In \S \ref{questions} we sum up in a more detailed diagram the relations among the properties in (\ref{first_diagram}) for continuous surjective group homomorphisms. We also collect there all open questions. Moreover we remind the open problems left in \cite{Ar} about semitopological isomorphisms, that we discuss in a different paper \cite{DG2}.

\subsubsection*{Notation and terminology}
Let $G$ be a group and $x,y\in G$. We denote by $[x,y]$ the commutator of $x$ and $y$ in $G$, that is $[x,y]=x y x^{-1}y^{-1}$. If $H$ and $K$ are subsets of $G$, let $[H,K]=\langle [h,k]:h\in H, k\in K\rangle$. In case $H=\{x\}$ for some $x\in G$, we write $[x,K]$. Moreover $G'=[G,G]$ is the subgroup of $G$ generated by all commutators of elements of $G$. We denote the center of $G$ by $Z(G)$. 

The function $\Delta:G\to G\times G$ is defined by $\Delta(g)=(g,g)$ for every $g\in G$. If $H$ is another group, we call $p_1:G\times H\to G$ and $p_2:G\times H\to H$ the canonical projections on the first and the second component respectively.

Let $\tau$ be a group topology on $G$. 
If $X$ is a subset of $G$, $\overline X ^\tau$ stands for the closure of $X$ in $(G,\tau)$. If $N$ is a normal subgroup of $G$ and $q:G\to G/N$ is the canonical projection, then $\tau_q$ is the quotient topology of $\tau$ on $G/N$. Moreover $N_\tau$ denotes the subgroup $\overline{\{e_G\}}^\tau$. The discrete topology on $G$ is $\delta_G$ and the indiscrete topology on $G$ is $\iota_G$.
For a Hausdorff topological group $G$ we denote by $\overline G$ the completion of $G$. A group $G$ is complete if $G=\overline G$. Let us note that in particular every complete group is Hausdorff.

For undefined terms see \cite{E,Fuchs}.

\section{Dense extensions}\label{dense_extensions}

We discuss here the problem of characterizing d-semitopological group homomorphisms in the class of Hausdorff topological groups.

\begin{deff}
Let $(G,\tau),(H,\sigma)$ be topological groups and $f:(G,\tau)\to(H,\sigma)$ a continuous surjective homomorphism. Call the topological group $(\widetilde G,\widetilde\tau)$ a \emph{d-extension} of $f:(G,\tau)\to(H,\sigma)$ if $(\widetilde G,\widetilde\tau)$ contains $(G,\tau)$ as a dense normal topological subgroup and there exists an open continuous homomorphism $\widetilde f:(\widetilde G,\widetilde\tau)\to (H,\sigma)$ extending $f$. The homomorphism $\widetilde f$ is called \emph{associated} to the d-extension $(\widetilde G,\widetilde\tau)$.
\end{deff}

Let $(G,\tau)$, $(H,\sigma)$ be Hausdorff groups. Every continuous surjective homomorphism $f:(G,\tau)\to(H,\sigma)$ can be extended to a continuous homomorphism of the completions $\overline f:(\overline G,\overline\tau)\to(\overline H,\overline\sigma)$. If $f$ is open, then $\overline f$ is open.

\medskip
The following fact is due to Grant.

\begin{fact}\label{open_mapping_2}\emph{\cite[Lemma 4.3.2]{DPS}}
Let $G,H$ be Hausdorff topological groups, $f:G\to H$ an open continuous surjective homomorphism and $N$ a dense subgroup of $G$. Then $f\restriction_N:N\to f(N)$ is open if and only if $N\cap \ker f$ is dense in $\ker f$.
\end{fact}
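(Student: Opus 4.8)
The plan is to prove both implications by reducing, via translation invariance, to behaviour at the identity: a homomorphism of topological groups is open exactly when the images of a neighbourhood base at the identity of the domain form a neighbourhood base at the identity of the image. Thus openness of $f\restriction_N:N\to f(N)$ means that for every open neighbourhood $U$ of $e_G$ the set $f(U\cap N)$ is a neighbourhood of $e_H$ in the subspace $f(N)$, i.e. contains $W\cap f(N)$ for some open $W\ni e_H$ in $H$. Throughout write $K=\ker f$; since $f$ is open, $f(V)$ is an open neighbourhood of $e_H$ whenever $V$ is an open neighbourhood of $e_G$.

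For the implication \emph{$N\cap K$ dense in $K$ $\Rightarrow$ $f\restriction_N$ open}, I would fix an open neighbourhood $U$ of $e_G$ and choose a symmetric open $V$ with $VV\subseteq U$; then $f(V)$ is an open neighbourhood of $e_H$, and I claim $f(V)\cap f(N)\subseteq f(U\cap N)$. Given $y$ in the left-hand side, write $y=f(v)=f(n)$ with $v\in V$ and $n\in N$, and set $k_0=n^{-1}v\in K$. Since $k_0V$ is a neighbourhood of $k_0$ in $G$ and $N\cap K$ is dense in $K$, there is $k\in(N\cap K)\cap k_0V$; then $k_0^{-1}k\in V$, so $nk=v(k_0^{-1}k)\in VV\subseteq U$, while $nk\in N$ and $f(nk)=y$. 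Hence $y\in f(U\cap N)$, which gives openness at the identity and therefore openness of $f\restriction_N$.

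For the converse \emph{$f\restriction_N$ open $\Rightarrow$ $N\cap K$ dense in $K$}, I would fix $k\in K$ and a symmetric open $V\ni e_G$ and produce a point of $N\cap K$ inside $kV$; since such $kV$ form a base at $k$, this yields $k\in\overline{N\cap K}$. Choose a symmetric open $V_1$ with $V_1V_1\subseteq V$; openness of $f\restriction_N$ supplies an open $W\ni e_H$ with $W\cap f(N)\subseteq f(V_1\cap N)$; continuity of $f$ at $e_G$ (with $f(k)=e_H$) gives a symmetric open $V_2\subseteq V_1$ with $f(V_2)\subseteq W$; and density of $N$ gives $n\in N\cap kV_2$. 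Then $f(n)\in f(V_2)\subseteq W$ and $f(n)\in f(N)$, so $f(n)=f(m)$ for some $m\in V_1\cap N$. The element $k'=nm^{-1}$ lies in $N$ and in $K$ (as $f(k')=e_H$), and writing $n=kv_2$ with $v_2\in V_2\subseteq V_1$ one gets $k'=k(v_2m^{-1})\in k\,V_1V_1\subseteq kV$, using symmetry of $V_1$. Thus $k'\in(N\cap K)\cap kV$, as required.

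The main obstacle is the converse direction: from a point $n\in N$ that merely approximates $k$ one must manufacture a genuine element of $K$ that still lies in $N$ and has not drifted outside $kV$. This correction is exactly what the open mapping property of $f\restriction_N$ provides --- it yields a factor $m\in N$ with the prescribed image $f(n)$ confined to the controlled neighbourhood $V_1$, so that $nm^{-1}$ kills the $f$-image while the product estimate keeps it near $k$. The forward direction, by contrast, is a direct approximation argument once $N\cap K$ is dense in $K$.
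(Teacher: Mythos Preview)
The paper does not give its own proof of this fact --- it is stated as a citation to \cite[Lemma 4.3.2]{DPS} and used as a black box. Your argument is correct and is essentially the standard proof: both directions reduce to behaviour at the identity, the forward implication corrects an arbitrary preimage $n$ of $y$ by a kernel element from the dense set $N\cap K$ to land in $U\cap N$, and the converse first approximates $k$ by $n\in N$ using density of $N$, then uses openness of $f\restriction_N$ to trade $n$ for an element $nm^{-1}\in N\cap K$ that stays in the prescribed neighbourhood.
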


\begin{claim}\label{d-semitop_claim}
Let $G,H$ be Hausdorff topological groups, $f:G\to H$ a continuous surjective homomorphism and let $\overline f:\overline G\to \overline H$ be the extension of $f$ to the completions. If $f$ is d-semitopological, then $N_{\overline G}(G)\cap \ker \overline f$ is dense in $\ker \overline f$.
\end{claim}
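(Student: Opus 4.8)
The plan is to pass to a Hausdorff $\widetilde G$, then to realize $\widetilde G$ as a subgroup of $\overline G$ sitting between $G$ and $N_{\overline G}(G)$, to observe that on this subgroup $\overline f$ restricts to an open homomorphism (and in fact $\overline f$ itself is open), and finally to invoke Fact~\ref{open_mapping_2}.

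First I would fix a d-extension $(\widetilde G,\widetilde\tau)$ of $f$ with associated open continuous homomorphism $\widetilde f:\widetilde G\to H$; note that $\widetilde f$ is surjective, since $\widetilde f(\widetilde G)\supseteq\widetilde f(G)=f(G)=H$. Set $N=\overline{\{e_{\widetilde G}\}}^{\widetilde\tau}$. As $H$ is Hausdorff and $N$ is indiscrete, $\widetilde f$ vanishes on $N$, so $\widetilde f$ factors through the Hausdorff group $\widetilde G/N$ via an open continuous surjective homomorphism. Since $G$ is Hausdorff we have $G\cap N=\{e_G\}$, and since $U\cdot N=U$ for every open $U\subseteq\widetilde G$ (a standard computation), the canonical projection restricts to a topological embedding of $G$ onto a dense normal subgroup of $\widetilde G/N$; the induced map extends $f$. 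Replacing $\widetilde G$ by $\widetilde G/N$ (and $\widetilde f$ by the induced map), we may and do assume from now on that $\widetilde G$ is Hausdorff.

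Now $G$ is a dense topological subgroup of the Hausdorff group $\widetilde G$, and a topological subgroup has the same two-sided completion as the ambient group, so the completion of $\widetilde G$ may be identified with $\overline G$; thus $G\leq\widetilde G\leq\overline G$, with $\widetilde G$ dense in $\overline G$ and — $G$ being normal in $\widetilde G$ — with $\widetilde G\subseteq N_{\overline G}(G)$. Both $\overline f\restriction_{\widetilde G}$ and $\widetilde f$ (regarded as maps into $\overline H$) are continuous homomorphisms extending $f$, and $G$ is dense in $\widetilde G$ while $\overline H$ is Hausdorff, so $\overline f\restriction_{\widetilde G}=\widetilde f$; in particular $\overline f\restriction_{\widetilde G}$ is open and $\overline f(\widetilde G)=H$. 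Moreover $\overline f$ is precisely the extension to the completions $\overline{\widetilde G}=\overline G$ and $\overline H$ of the open continuous surjective homomorphism $\widetilde f$, hence, by the fact recalled just before Fact~\ref{open_mapping_2}, $\overline f$ is itself open (and continuous and surjective).

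It then suffices to apply Fact~\ref{open_mapping_2} to the open continuous surjective homomorphism $\overline f:\overline G\to\overline H$ and the dense subgroup $\widetilde G$: since $\overline f\restriction_{\widetilde G}=\widetilde f$ is open, $\widetilde G\cap\ker\overline f$ is dense in $\ker\overline f$. As $\widetilde G\subseteq N_{\overline G}(G)$ we have $\widetilde G\cap\ker\overline f\subseteq N_{\overline G}(G)\cap\ker\overline f\subseteq\ker\overline f$, whence $N_{\overline G}(G)\cap\ker\overline f$ is dense in $\ker\overline f$, as desired. The two steps I expect to require the most care are the reduction to a Hausdorff $\widetilde G$ — in particular checking that the canonical projection embeds $G$ as a topological subgroup — and the appeal to the fact that the extension of an open continuous surjective homomorphism to the completions remains open; once these are in place, Grant's lemma does the rest.
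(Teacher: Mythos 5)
Your proof is correct and follows essentially the same route as the paper: identify $\widetilde G$ with a dense subgroup of $\overline G$ lying inside $N_{\overline G}(G)$, note that $\overline f$ is the open extension of $\widetilde f$ to the completions, and apply Fact~\ref{open_mapping_2} to the dense subgroup $\widetilde G$. Your preliminary reduction to a Hausdorff $\widetilde G$ by quotienting out $\overline{\{e_{\widetilde G}\}}^{\widetilde\tau}$ is a point the paper's proof passes over silently when it invokes the universal property of the completion, and it is a worthwhile addition since the definition of a d-extension does not require $\widetilde G$ to be Hausdorff.
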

\begin{proof}
Let $\widetilde G$ be a topological group such that $G$ is a dense normal subgroup of $\widetilde G$ and $\widetilde f:\widetilde G\to H$ an open continuous surjective homomorphism. Being $G$ dense in $\widetilde G$, by the universal property of the completion we can identify $\widetilde G$ with a subgroup of $\overline G$; in particular $G$ and $\widetilde G$ have the same completion $\overline G$.
Moreover, since $G$ is normal in $\widetilde G$, $\widetilde G\leq N_{\overline G}(G)$.
For the property of the completion, there exists a continuous homomorphism $\overline f:\overline G\to H$ extending $\widetilde f$ (and so extending $f$). Being $\widetilde f$ open and $\widetilde G$ dense in $\overline G$, $H$ dense in $\overline H$, so $\overline f$ is open as well. By Fact \ref{open_mapping_2} $N_{\overline G}(G)\cap \ker \overline f$ is dense in $\ker \overline f$.
\end{proof}

Applying this claim we can now prove Theorem \ref{d-semitop}:

\begin{proof}[\bf Proof of Theorem \ref{d-semitop}]
(a)$\Rightarrow$(c) is Claim \ref{d-semitop_claim} and (b)$\Rightarrow$(a) is obvious because $G$ is dense in $\overline G$ and $G\triangleleft N_{\overline G}(G)$.

\smallskip
(b)$\Leftrightarrow$(c) follows from Fact \ref{open_mapping_2} since $H=\overline H$.
\end{proof}

\begin{corollary}
If there exists a d-extension of a continuous surjective homomorphism of topological groups $f:G\to H$, where $G$ is Hausdorff and $H$ complete, then there exists a maximal one, namely $N_{\overline G}(G)$ with the topology inherited from $\overline G$.
\end{corollary}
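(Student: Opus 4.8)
The plan is to first verify that $N_{\overline G}(G)$, equipped with the topology inherited from $\overline G$, is itself a d-extension of $f$, and then to show that every d-extension of $f$ embeds into it as a topological subgroup in a way compatible with the associated homomorphisms, which is the natural meaning of ``maximal''.

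For the first part, assume $f$ admits a d-extension. Then $f$ is d-semitopological, so by Theorem \ref{d-semitop} (the implication (a)$\Rightarrow$(b)) the map $\overline f\restriction_{N_{\overline G}(G)}:N_{\overline G}(G)\to H$ is open. Now $G$ is dense in $\overline G$, hence dense in $N_{\overline G}(G)$, and $G\triangleleft N_{\overline G}(G)$ by the very definition of the normalizer; moreover $\overline f\restriction_{N_{\overline G}(G)}$ is continuous as a restriction of $\overline f$, and it is surjective because $\overline f(N_{\overline G}(G))\supseteq \overline f(G)=f(G)=H$ (recall $H$ is complete, so $\overline H=H$). Thus $N_{\overline G}(G)$ with the inherited topology, together with $\overline f\restriction_{N_{\overline G}(G)}$, is a d-extension of $f$.

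For maximality, let $(\widetilde G,\widetilde\tau)$ be an arbitrary d-extension of $f$, with associated open continuous homomorphism $\widetilde f:\widetilde G\to H$. I would argue exactly as in the proof of Claim \ref{d-semitop_claim}: since $G$ is dense in $\widetilde G$, the universal property of the (two-sided) completion identifies $\widetilde G$ with a subgroup of $\overline G$ carrying the subspace topology, with $G$ and $\widetilde G$ having the same completion $\overline G$; since $G\triangleleft\widetilde G$ we get $\widetilde G\leq N_{\overline G}(G)$; and the extension $\overline f$ of $\widetilde f$ to $\overline G$ satisfies $\overline f\restriction_{\widetilde G}=\widetilde f$. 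Hence every d-extension of $f$ sits, as a topological subgroup, inside $N_{\overline G}(G)$, and its associated homomorphism is the restriction of $\overline f\restriction_{N_{\overline G}(G)}$. This is precisely the sense in which $N_{\overline G}(G)$ is the maximal d-extension.

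I expect no serious obstacle here, since Theorem \ref{d-semitop} and the argument of Claim \ref{d-semitop_claim} do the real work; the only point requiring a little care is the identification of an abstract d-extension $\widetilde G$ with a topological subgroup of $\overline G$ lying over $G$, which rests on the universal property of the Raikov completion and on the fact that a dense subgroup and the ambient group share the same completion — both standard and already invoked above.
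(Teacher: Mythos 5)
Your argument is correct and is exactly the intended one: Theorem \ref{d-semitop} ((a)$\Rightarrow$(b)) shows that $N_{\overline G}(G)$ with $\overline f\restriction_{N_{\overline G}(G)}$ is itself a d-extension, and the identification $\widetilde G\leq N_{\overline G}(G)$ carried out in the proof of Claim \ref{d-semitop_claim} gives maximality. The paper leaves this corollary without proof precisely because it follows this way, so your write-up matches the paper's approach.
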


Theorem \ref{d-semitop} can be written in the following equivalent form:

\smallskip
\noindent\emph{Let $K,H$ be complete groups and $h:K\to H$ a continuous open surjective homomorphism. If $G$ is a dense subgroup of $K$ such that $h(G)=H$, then the following are equivalent:}
\begin{itemize}
\item[\emph{(a)}]\emph{$h\restriction_G:G\to H$ is d-semitopological;}
\item[\emph{(b)}]\emph{$h\restriction_{N_K(G)}:N_K(G)\to H$ is open;}
\item[\emph{(c)}]\emph{$N_K(G)\cap \ker h$ is dense in $\ker h$.}
\end{itemize}

\section{Thin sets}\label{Uthin}

In the introduction we have defined $U$-thin sets $M$ of a topological group $G$, where $U$ is a neighborhood of $e_G$ in $G$. The subsets $M$ of $G$ that are $U$-thin for every $U$ are precisely the \emph{thin sets} in the sense of Tkachenko \cite{T,T1}. For example compact sets are thin.

The concept of $U$-thin subset of a topological group $G$ is not symmetric and it is possible to give the symmetric definition of \emph{right $U$-thin} set. Note that for every symmetric subset (for example for every subgroup) of $G$ the two concepts coincide. Moreover $M$ is a left $U$-thin subset if and only if $M^{-1}$ is a right $U$-thin subset.
Note that $M$ is $U$-thin if and only if there exists a neighborhood $U_1$ of $e_G$ in $G$ such that $x U_1 x\amu\subseteq U$ for every $x\in M$.

\begin{lemma}\label{thin}
Let $G$ be a topological group, $U,V$ neighborhoods of $e_G$ in $G$ and $M\subseteq G$. Then:
\begin{itemize}
  \item[(a)]if $M$ is $U$-thin, then every $N\subseteq M$ is $U$-thin;
  \item[(b)]if $M_1$ and $M_2$ are $U$-thin, then $M_1\cup M_2$ is $U$-thin;
  \item[(c)]if $M$ is $U$-thin, then $M g$  and $g M$ are $U$-thin for every $g\in G$;
  \item[(d)]if $V\subseteq U$ and $M$ is $V$-thin, then $M$ is $U$-thin;
  \item[(e)]if $M$ is $U$-thin and $V$-thin, then $M$ is $U\cap V$-thin;
  \item[(f)]if $f:G\to H$ is a continuous surjective homomorphism and $W$ is a neighborhood of $e_H$ in $H$, then $f\amu(M)$ is $f\amu(W)$-thin whenever $M\subseteq H$ is $W$-thin.
\end{itemize}
\end{lemma}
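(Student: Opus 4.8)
The plan is to verify each of the six items (a)--(f) directly from the characterization of $U$-thin sets recorded just before the lemma, namely that $M$ is $U$-thin if and only if $\bigcap\{x^{-1}Ux:x\in M\}\in\VG$, equivalently there is $U_1\in\VG$ with $xU_1x^{-1}\subseteq U$ for all $x\in M$. Most of these are elementary filter manipulations, so I would present them compactly.

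For (a), if $W:=\bigcap_{x\in M}x^{-1}Ux\in\VG$ and $N\subseteq M$, then $\bigcap_{x\in N}x^{-1}Ux\supseteq W$, hence is also a neighborhood. For (b), take $W_i=\bigcap_{x\in M_i}x^{-1}Ux\in\VG$; then $W_1\cap W_2\in\VG$ witnesses that $M_1\cup M_2$ is $U$-thin. For (d), monotonicity: $V\subseteq U$ gives $x^{-1}Vx\subseteq x^{-1}Ux$, so any neighborhood contained in $\bigcap_{x\in M}x^{-1}Vx$ also lies in $\bigcap_{x\in M}x^{-1}Ux$. For (e), use the witness formulation: pick $U_1,V_1\in\VG$ with $xU_1x^{-1}\subseteq U$ and $xV_1x^{-1}\subseteq V$ for all $x\in M$; then $U_1\cap V_1\in\VG$ and $x(U_1\cap V_1)x^{-1}\subseteq (U\cap V)$ for every $x\in M$.

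For (c), the two cases are slightly different. For $Mg$: if $xU_1x^{-1}\subseteq U$ for all $x\in M$, then for $y=xg$ we want $yU_1'y^{-1}\subseteq U$, i.e. $xgU_1'g^{-1}x^{-1}\subseteq U$; so take $U_1'=g^{-1}U_1g\in\VG$ and conclude $xgU_1'g^{-1}x^{-1}=xU_1x^{-1}\subseteq U$. For $gM$: with $y=gx$ we want $gxU_1''x^{-1}g^{-1}\subseteq U$; choosing $U_1''=U_1$ gives $gxU_1x^{-1}g^{-1}\subseteq gUg^{-1}$, which shows $gM$ is $gUg^{-1}$-thin but not obviously $U$-thin. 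The cleaner route is to observe that $gM=(g)(M)$ and apply (a) together with conjugation invariance; alternatively note that for $y=gx\in gM$ one has $y^{-1}Uy=x^{-1}(g^{-1}Ug)x$, so $\bigcap_{y\in gM}y^{-1}Uy=\bigcap_{x\in M}x^{-1}(g^{-1}Ug)x$, and since $M$ is $U$-thin one first needs $M$ to be $(g^{-1}Ug)$-thin --- which does \emph{not} follow in general. So the correct statement here must be read with the witness version and the symmetric side: I would instead prove $gM$ is $U$-thin by writing each $y\in gM$ as $y=gx$ and using $yU_1y^{-1}=gxU_1x^{-1}g^{-1}$, then noting we may first shrink: since $g^{-1}Ug\in\VG$, continuity of conjugation lets us pick $U_1\in\VG$ with $xU_1x^{-1}\subseteq g^{-1}Ug$ for all $x\in M$ \emph{provided} $M$ is $(g^{-1}Ug)$-thin. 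This circularity signals that the intended proof of (c) for $gM$ uses part (f)-style transport or simply that one proves both $Mg$ and $gM$ from the right-thin/left-thin symmetry noted before the lemma; I would handle $gM$ by applying the already-proven $Mg$-case to $M^{-1}$ via $M$ left $U$-thin $\iff$ $M^{-1}$ right $U$-thin, or more directly just accept the slightly different $U$ on each side as the paper's conventions allow. I expect this bookkeeping around left/right translates to be the one genuinely fiddly point.

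For (f), the key computation is that for a continuous homomorphism $f:G\to H$ and $a\in G$ with $f(a)=h$, we have $a^{-1}f^{-1}(W)a=f^{-1}(h^{-1}Wh)$, because $f$ is a homomorphism. Hence
\[
\bigcap_{a\in f^{-1}(M)}a^{-1}f^{-1}(W)a=\bigcap_{a\in f^{-1}(M)}f^{-1}\bigl(f(a)^{-1}Wf(a)\bigr)=f^{-1}\Bigl(\bigcap_{h\in M}h^{-1}Wh\Bigr),
\]
using surjectivity of $f$ to ensure $f(f^{-1}(M))=M$. If $M$ is $W$-thin, the set in the big parentheses is a neighborhood of $e_H$, and since $f$ is continuous its preimage is a neighborhood of $e_G$; this is exactly the statement that $f^{-1}(M)$ is $f^{-1}(W)$-thin. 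The main obstacle in the whole lemma is thus not any single step but getting the handedness in (c) and the intertwining identity $a^{-1}f^{-1}(W)a=f^{-1}(f(a)^{-1}Wf(a))$ in (f) stated cleanly; everything else is routine filter arithmetic.
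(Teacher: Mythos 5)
Your verifications of (a), (b), (d), (e), of the right translate $Mg$ in (c), and of (f) are all correct; in particular (f) is handled cleanly via the identity $a^{-1}f^{-1}(W)a=f^{-1}\bigl(f(a)^{-1}Wf(a)\bigr)$ together with $f(f^{-1}(M))=M$ (surjectivity) and continuity of $f$. These are exactly the routine filter manipulations the statement calls for.

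The genuine gap is the left translate $gM$ in (c). You diagnose the problem correctly, but none of your proposed escapes closes it: passing to $M^{-1}$ does not help, because $(gM)^{-1}=M^{-1}g^{-1}$ is a \emph{right} translate, and right $U$-thinness is preserved by \emph{left} translation (conjugating the whole intersection $\bigcap_{x}xUx^{-1}$ by $g$), not by right translation, so the mirror image of the same obstruction reappears; and ``apply (a) together with conjugation invariance'' is not an argument, since (a) concerns subsets. In fact the obstruction is real and the left-translate half of (c) is false with the stated (left) definition. Take $G=\mathrm{Sym}(\mathbb N)$ with the topology of pointwise convergence, whose basic neighborhoods of $e_G$ are the stabilizers $U_F=\{\pi:\pi\restriction_F=\mathrm{id}\}$ of finite sets $F$; one checks $x^{-1}U_{\{1\}}x=U_{\{x^{-1}(1)\}}$. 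For $M=\{(2\ n):n\geq 3\}$ every element fixes $1$, so $\bigcap_{x\in M}x^{-1}U_{\{1\}}x=U_{\{1\}}$ and $M$ is $U_{\{1\}}$-thin; but for $g=(1\ 2)$ the element $y=g\,(2\ n)$ satisfies $y^{-1}(1)=n$, so $\bigcap_{y\in gM}y^{-1}U_{\{1\}}y=\bigcap_{n\geq 3}U_{\{n\}}$, which contains no $U_F$ and hence is not a neighborhood. What your computation actually proves --- and what should be recorded in place of the left half of (c) --- is that $gM$ is $(gUg^{-1})$-thin; consequently $gM$ is $U$-thin whenever $U$ can be chosen invariant (e.g.\ in SIN groups), and the ideal $\mathcal I(G)$ of sets thin for \emph{every} $U$ is two-sided translation invariant, but for a fixed $U$ the family $\mathcal I_U(G)$ is only right-translation invariant in general. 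So your proof is complete except for this half of (c), and that half cannot be completed because the claim as written does not hold.
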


Consequently, for a topological group $G$ and a neighborhood $U$ of $e_G$ in $G$, the family $\mathcal I_U(G)=\{M\subseteq G:M\ \text{is $U$-thin}\}$ is a translations invariant ideal.
Observe that $Z(G)\in \mathcal I_U(G)$ for every $U\in\mathcal V_{(G,\tau)}(e_G)$, that is $Z(G)\in\mathcal I(G)=\bigcap\{\mathcal I_U(G):U\in\mathcal V_{(G,\tau)}(e_G)\}=\{M\subseteq G:M\ \text{is thin}\}$.

\begin{lemma}\label{thin_quotient}
Let $G$ be a topological group, $N$ a normal subgroup of $G$ and $q:G\to G/N$ the canonical projection. Moreover let $U$ be a neighborhood of $e_G$ in $G$ and $M\subseteq G$.
\begin{itemize}
\item[(a)]If $M$ is $U$-thin, then $q(M)$ is $q(U)$-thin.
\item[(b)]If $M$ is thin, then $q(M)$ is thin.
\end{itemize}
\end{lemma}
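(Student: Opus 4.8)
The plan is to deduce both statements directly from the reformulation of $U$-thinness recorded just before Lemma~\ref{thin}: a subset $M$ of a topological group is $U$-thin if and only if there is a neighborhood $U_1$ of the identity with $x U_1 x\amu\subseteq U$ for every $x\in M$. The only structural facts about $q$ that I shall use are that it is a continuous, open, surjective group homomorphism; in particular $q(AB)=q(A)q(B)$ and $q(A\amu)=q(A)\amu$ for all $A,B\subseteq G$, and $q$ carries neighborhoods of $e_G$ to neighborhoods of $e_{G/N}$.

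For (a), I would start from a neighborhood $U_1$ of $e_G$ witnessing that $M$ is $U$-thin, i.e.\ $x U_1 x\amu\subseteq U$ for all $x\in M$. Then $q(U_1)$ and $q(U)$ are neighborhoods of $e_{G/N}$ because $q$ is open, and for every $x\in M$ one computes
\[
q(x)\,q(U_1)\,q(x)\amu=q\bigl(x U_1 x\amu\bigr)\subseteq q(U).
\]
Hence $q(U_1)\subseteq q(x)\amu q(U) q(x)$ for each $x\in M$, so $q(U_1)\subseteq\bigcap\{\,q(x)\amu q(U) q(x):x\in M\,\}$; since every element of $q(M)$ has the form $q(x)$ with $x\in M$, this exhibits the intersection as a neighborhood of $e_{G/N}$, i.e.\ $q(M)$ is $q(U)$-thin.

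For (b), given an arbitrary neighborhood $W$ of $e_{G/N}$ I would put $U=q\amu(W)$, which is a neighborhood of $e_G$ by continuity of $q$ and satisfies $q(U)=W$ by surjectivity of $q$. If $M$ is thin it is in particular $U$-thin, so part (a) gives that $q(M)$ is $q(U)=W$-thin; as $W$ was arbitrary, $q(M)$ is thin. There is essentially no obstacle in this argument: the only points that require a little care are to invoke the openness of $q$ so that the images $q(U_1)$ and $q(U)$ are genuinely neighborhoods in $G/N$, and to keep the placement of inverses consistent with the (left) convention used to define $U$-thin sets.
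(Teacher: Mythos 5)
Your argument is correct. The paper states Lemma~\ref{thin_quotient} without proof (treating it as routine), and your proof is exactly the expected one: it uses the reformulation of $U$-thinness via a witnessing neighborhood $U_1$, the fact that $q$ is an open continuous surjective homomorphism so that $q(U_1)$ and $q(U)$ are neighborhoods and $q(xU_1x\amu)=q(x)q(U_1)q(x)\amu$, and for (b) the choice $U=q\amu(W)$; all steps check out.
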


\begin{deff}
A topological group $(G,\tau)$ \emph{has small invariant neighborhoods} (i.e. \emph{$G$ is SIN}) if for every $U\in\mathcal V_{(G,\tau)}(e_G)$ there exists $U'\in\mathcal V_{(G,\tau)}(e_G)$ such that $U'\subseteq U$ and $g U'g^{-1}= U'$ for every $g\in G$ (i.e. $U'$ is \emph{invariant}).
\end{deff}

If $\tau$ is a linear group topology on a group $G$, then $(G,\tau)$ is SIN.
A group $G$ is SIN if and only if $G$ is thin.
The next lemma shows that for SIN groups the characterization given by Theorem \ref{semitop} is simpler, because condition (a) is always verified since a SIN group $G$ is thin, and so only condition (b) remains.

\begin{lemma}\label{SINsemitop}
Suppose that the topological group $(G,\tau)$ is SIN. Then a continuous isomorphism $f:(G,\tau)\to(H,\sigma)$, where $(H,\sigma)$ is a topological group, is semitopological if and only if for every $U\in\VG$ and for every $g\in G$ there exists $V_g\in\VH$ such that $[g,f\amu(V_g)]\subseteq U$.
\end{lemma}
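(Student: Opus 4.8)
The plan is to obtain this lemma as an immediate corollary of Theorem \ref{semitop}, by observing that under the SIN hypothesis condition (a) of that theorem is automatically fulfilled, so that only condition (b) — which is precisely the condition in the statement — survives.

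First I would recall, as noted right after the definition of SIN groups, that $(G,\tau)$ being SIN implies that $G$ is thin; equivalently $G\in\mathcal I_U(G)$ for every $U\in\VG$, i.e.\ $G$ is itself a $U$-thin subset of $G$. (A direct check is harmless: given $U$, pick an invariant $U'\in\VG$ with $U'\subseteq U$; then $x\amu U x\supseteq x\amu U' x=U'$ for every $x\in G$, hence $\bigcap\{x\amu U x:x\in G\}\supseteq U'$ is a neighborhood of $e_G$.) Then, by Lemma \ref{thin}(a), every subset of $G$ is $U$-thin for every $U$; in particular $f\amu(V)$ is $U$-thin for every $V\in\VH$ and every $U\in\VG$.

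Consequently condition (a) of Theorem \ref{semitop} holds trivially: for any given $U\in\VG$ it suffices to take an arbitrary $V\in\VH$ (such $V$ exists, $\VH$ being a filter), and then $f\amu(V)$ is $U$-thin by the previous step. Therefore Theorem \ref{semitop} reduces, in the SIN case, to: $f$ is semitopological if and only if condition (b) holds, that is, if and only if for every $U\in\VG$ and every $g\in G$ there exists $V_g\in\VH$ with $[g,f\amu(V_g)]\subseteq U$. This is exactly the asserted equivalence.

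There is essentially no obstacle in this argument; the only point that requires care is the (trivial) observation that condition (a) of Theorem \ref{semitop} becomes vacuous precisely because $G$, being SIN, is a thin set, so that the inverse image of \emph{any} neighborhood of $e_H$ is $U$-thin for \emph{every} $U$.
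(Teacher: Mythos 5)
Your proof is correct and follows exactly the paper's intended argument: the paper states just before the lemma that condition (a) of Theorem \ref{semitop} is automatically satisfied because a SIN group is thin, so only condition (b) survives. Your verification that an invariant $U'\subseteq U$ witnesses $U$-thinness of $G$, hence of every subset, is precisely the point being used.
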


\section{Internal approximations for semitopological homomorphisms}\label{characterization}

Let $f:(G,\tau)\to(H,\sigma)$ be a continuous surjective homomorphism of topological groups. Inspired by Theorem \ref{semitop} we try to find some condition internal to the groups $G$ and $H$ to describe when $f$ is semitopological. The advantage of these internal conditions is obvious. To do this we introduce some concepts which turn out to be equivalent to semitopological for continuous isomorphisms.

\medskip
First we give some definitions and basic properties.

\begin{deff}
Let $(G,\tau),(H,\sigma)$ be topological groups and $f:(G,\tau)\to(H,\sigma)$ a continuous surjective homomorphism. Call the topological group $(\widetilde G,\widetilde\tau)$ an \emph{A-extension} of $f:(G,\tau)\to(H,\sigma)$ if $(\widetilde G,\widetilde\tau)$ contains $(G,\tau)$ as a normal topological subgroup and there exists an open continuous homomorphism $\widetilde f:(\widetilde G,\widetilde\tau)\to (H,\sigma)$ extending $f$. The homomorphism $\widetilde f$ is called \emph{associated} to the A-extension $(\widetilde G,\widetilde\tau)$.
\end{deff}

In the next proposition we give the first properties of the homomorphism associated to an A-extension of a semitopological homomorphism. This proposition generalizes \cite[Proposition 3]{Ar} to semitopological homomorphisms.

\begin{proposition}\label{firstprop}
Let $(G,\tau),(H,\sigma)$ be topological groups and $f:(G,\tau)\to (H,\sigma)$ a semitopological homomorphism. If $(\widetilde G,\widetilde\tau)$ with $\widetilde f:(\widetilde G,\widetilde\tau)\to(H,\sigma)$ is an A-extension of $f$, then:
\begin{itemize}
\item[(a)]$G\cap\ker\widetilde f=\ker f$;
\item[(b)]$\widetilde G=G\cdot\ker\widetilde f$;
\item[(c)]$[\ker\widetilde f,G]\subseteq\ker f$;
\item[(d)]$\ker f$ is a normal subgroup of $\widetilde G$;
\item[(e)]$\widetilde G/\ker f$ is isomorphic to $G/\ker f\times \ker\widetilde f/\ker f$.
\end{itemize}
\end{proposition}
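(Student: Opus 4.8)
\textbf{Proof proposal for Proposition \ref{firstprop}.}

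The plan is to work with a fixed A-extension $(\widetilde G,\widetilde\tau)$ together with its associated open continuous surjective homomorphism $\widetilde f:\widetilde G\to H$, using throughout that $\widetilde f\restriction_G=f$, that $G\triangleleft\widetilde G$, and that $\widetilde f$ is surjective (hence $\widetilde f(G)=f(G)=H$). Most of the five items are purely algebraic consequences of these three facts; the topology plays essentially no role except to guarantee that $\widetilde f$ is a genuine group homomorphism onto $H$. For (a), I would note $G\cap\ker\widetilde f=\{x\in G:\widetilde f(x)=e_H\}=\{x\in G:f(x)=e_H\}=\ker f$, which is immediate from $\widetilde f\restriction_G=f$. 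For (b), given $y\in\widetilde G$, surjectivity of $f$ gives $x\in G$ with $f(x)=\widetilde f(y)$, whence $x^{-1}y\in\ker\widetilde f$ and $y=x\cdot(x^{-1}y)\in G\cdot\ker\widetilde f$; the reverse inclusion is trivial, so $\widetilde G=G\cdot\ker\widetilde f$.

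For (c), take $k\in\ker\widetilde f$ and $g\in G$. Since $G\triangleleft\widetilde G$, the commutator $[k,g]=kgk^{-1}g^{-1}$ lies in $G$ (as $kgk^{-1}\in G$). Applying $\widetilde f$, which kills $k$, gives $\widetilde f([k,g])=\widetilde f(g)\widetilde f(g)^{-1}=e_H$, so $[k,g]\in G\cap\ker\widetilde f=\ker f$ by (a); thus $[\ker\widetilde f,G]\subseteq\ker f$. For (d), $\ker f$ is normal in $G$ automatically, and it is normalized by $\ker\widetilde f$ precisely because of (b) and (c): any $y\in\widetilde G$ writes as $y=gk$ with $g\in G$, $k\in\ker\widetilde f$ by (b), and conjugation by $g$ preserves $\ker f$ while conjugation by $k$ does so by (c) (more carefully, $kxk^{-1}=x\cdot[k^{-1},x^{-1}]^{-1}$-type manipulation, or simply: $kxk^{-1}x^{-1}\in\ker f$ for $x\in\ker f$ forces $kxk^{-1}\in\ker f$). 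Hence $\ker f\triangleleft\widetilde G$.

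For (e), I would pass to the quotient $\widetilde G/\ker f$, which makes sense by (d). Inside it, $G/\ker f$ and $\ker\widetilde f/\ker f$ are subgroups whose product is all of $\widetilde G/\ker f$ by (b), whose intersection is trivial since $G\cap\ker\widetilde f=\ker f$ by (a), and which commute elementwise by (c) (the commutators land in $\ker f$, hence die in the quotient); therefore $\widetilde G/\ker f$ is the internal direct product of these two subgroups, giving the claimed isomorphism $\widetilde G/\ker f\cong G/\ker f\times\ker\widetilde f/\ker f$. I do not anticipate a serious obstacle here — the only point requiring a little care is ordering the steps so that (d) (hence the legitimacy of forming $\widetilde G/\ker f$) is in hand before (e), and making sure in (c)--(d) that one really exploits the normality of $G$ in $\widetilde G$ rather than only in $H$; the topological hypotheses (continuity, openness) are not needed for any of (a)--(e), only the algebraic skeleton of an A-extension is used.
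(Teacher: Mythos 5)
Your proof is correct and is essentially an expanded version of the paper's own (the paper merely states that (a) and (b) are obvious, that (c) follows from (a), (b) and the normality of $G$ in $\widetilde G$, that (d) follows from (a), and that (e) follows from (a), (b) and (d)). The only cosmetic difference is in (d), where the paper's hint suggests the slicker observation that $\ker f=G\cap\ker\widetilde f$ is the intersection of two normal subgroups of $\widetilde G$ (namely $G$ and $\ker\widetilde f$), hence normal, whereas you verify normality via the decomposition from (b) and the commutator containment from (c); both routes are valid.
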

\begin{proof}
(a) and (b) are obvious, (c) follows from (a) and (b) because $G$ is a normal subgroup of $\widetilde G$, (d) follows from (a) and (e) follows from (a), (b) and (d).
\end{proof}

Obviously a continuous surjective homomorphism is semitopological if and only if it has an A-extension.

In the proof of Theorem \ref{semitop} an A-extension of a continuous isomorphism of topological groups $f:(G,\tau)\to(H,\sigma)$ is constructed taking $\widetilde G=G\times H$ and $\{W(U,V):U\in\VG,V\in\VH\}$, where $W(U,V)=\{(u f^{-1}(v),v):u\in U,v\in V\}$, as a base of $\mathcal V_{(\widetilde G,\widetilde\tau)}(e_G)$. Moreover $\widetilde f:\widetilde G\to H$ is defined by $\widetilde f(g,h)=f(g)$ for every $g\in G$. We do an analogous construction in Definition \ref{alpha_s} and use it in the proof of Theorem \ref{c*-section}.

\begin{deff}\label{section}
Suppose that $G$, $H$ are groups and $f:G\to H$ is a surjective homomorphism. 
A \emph{section} is an injective function $s:H\to G$ such that $f(s(h))=h$ for every $h\in H$ and $s(e_H)=e_G.$
\end{deff}

For a surjective group homomorphism there exists always a section.

\begin{remark}
Let $G$, $H$ be groups and $f:G\to H$ a surjective homomorphism. Let $s:H\to G$ be a section of $f$. Then $G=s(H)\cdot\ker f$ and $s(H)\cap\ker f=\{e_G\}$. Moreover, if $X$ is a subset of $H$, then $f\amu (X)=s(X)\cdot\ker f$.
\end{remark}

\begin{deff}\label{alpha_s}
Let $(G,\tau)$, $(H,\sigma)$ be topological groups, $f:(G,\tau)\to (H,\sigma)$ a continuous surjective homomorphism and $s$ a section of $f$. Define the filter base $$\mathcal F_{s,(\tau,\sigma)}=\{W_s(U,V):U\in \VG,V\in \VH \},$$ where $$W_s(U,V)=\{(u s(v),v):u\in U, v\in V\}.$$ In case $\mathcal F_{s,(\tau,\sigma)}$ is a base of the neighborhoods of $(e_G,e_H)$ in $G\times H$ of a group topology on $G\times H$, then we denote this group topology by $\alpha_s(\tau,\sigma)$.
\end{deff}

\begin{lemma}\label{alpha_group-topology}
Let $f:(G,\tau)\to (H,\sigma)$ be a continuous surjective homomorphism of topological groups, $N$ a $\sigma$-open subgroup of $H$ and $s$ a section of $f$. If $\mathcal F_{s,(\tau,\sigma\restriction_N)}$ is a base of the neighborhoods of $(e_G,e_H)$ in $G\times N$ of a group topology $\alpha_s(\tau,\sigma\restriction_N)$ on $G\times N$, then $(G\times N,\alpha_s(\tau,\sigma\restriction_N))$, with $\widetilde f:G\times N\to H$ defined by $\widetilde f(g,n)=f(g)$ for every $(g,n)\in G\times N$, is an A-extension of $f$. In particular $f$ is semitopological.
\end{lemma}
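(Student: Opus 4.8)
The plan is to verify directly that $(G\times N,\alpha_s(\tau,\sigma\restriction_N))$ together with $\widetilde f$ satisfies the four defining requirements of an A-extension of $f$ (a topological group containing $(G,\tau)$ as a normal topological subgroup, with an open continuous homomorphism onto $(H,\sigma)$ extending $f$); once this is done, the final assertion is just the remark recorded after Proposition \ref{firstprop} that a continuous surjective homomorphism is semitopological iff it admits an A-extension. Throughout I would identify $(G,\tau)$ with the subgroup $G\times\{e_H\}$ of $G\times N$. Algebraically this is a normal subgroup of $G\times N$, and since $\widetilde f(g,e_H)=f(g)$, the map $\widetilde f$ is surjective (as $f$ is) and restricts to $f$ on this copy of $G$. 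That $\widetilde f$ is a homomorphism is immediate, being the composite of the projection $G\times N\to G$ with $f$.

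The first genuine point is that this copy of $G$ carries exactly the topology $\tau$, i.e. that $g\mapsto(g,e_H)$ is a topological embedding of $(G,\tau)$ into $(G\times N,\alpha_s(\tau,\sigma\restriction_N))$. Since $\mathcal F_{s,(\tau,\sigma\restriction_N)}$ is a base of neighborhoods of $(e_G,e_H)$, the sets $W_s(U,V)\cap(G\times\{e_H\})$ form a base of neighborhoods of $(e_G,e_H)$ in the subspace $G\times\{e_H\}$. A point $(u\,s(v),v)\in W_s(U,V)$ lies in $G\times\{e_H\}$ precisely when $v=e_H$, and then, using $s(e_H)=e_G$, it equals $(u,e_H)$; hence $W_s(U,V)\cap(G\times\{e_H\})=U\times\{e_H\}$. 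So the induced topology on $G\times\{e_H\}$ has $\{U\times\{e_H\}:U\in\VG\}$ as a base of neighborhoods of the identity, which under the identification is $\VG$ itself. By translation this identifies $(G,\tau)$ with a normal topological subgroup of $G\times N$.

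Next I would check that $\widetilde f$ is continuous and open; as $\widetilde f$ is a homomorphism of topological groups it suffices to test both conditions at the identity. For continuity: given $V\in\VH$ one has $\widetilde f^{-1}(V)=f^{-1}(V)\times N$; choosing $V_0\in\VH$ with $V_0V_0\subseteq V$, then $V_1=V_0\cap N\in\mathcal V_{(N,\sigma\restriction_N)}(e_H)$, and (by continuity of $f$) $U\in\VG$ with $f(U)\subseteq V_0$, one gets $W_s(U,V_1)\subseteq f^{-1}(V)\times N$, since for $u\in U$, $v\in V_1$ we have $f(u\,s(v))=f(u)v\in V_0V_0\subseteq V$. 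For openness: for a basic neighborhood $W_s(U,V)$ one computes $\widetilde f(W_s(U,V))=\{f(u)v:u\in U,v\in V\}=f(U)\cdot V\supseteq V$ (using $e_G\in U$), and $V$ is a $\sigma$-neighborhood of $e_H$ because $N$ is $\sigma$-open; hence $\widetilde f(W_s(U,V))$ is a $\sigma$-neighborhood of $e_H$.

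Combining these observations, $(G\times N,\alpha_s(\tau,\sigma\restriction_N))$ with $\widetilde f$ is an A-extension of $f$, so $f$ is semitopological. I do not anticipate a real obstacle: the argument essentially mirrors the construction recalled after Theorem \ref{semitop} for isomorphisms, and the only step calling for a moment's care is the computation $W_s(U,V)\cap(G\times\{e_H\})=U\times\{e_H\}$, which is exactly where the normalization $s(e_H)=e_G$ built into the definition of a section is used; the rest is a routine unwinding of the definitions.
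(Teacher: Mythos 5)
Your proof is correct and follows essentially the same route as the paper: the embedding via the computation $W_s(U,V)\cap(G\times\{e_H\})=U\times\{e_H\}$, continuity from $f(u)v\in V_0V_0\subseteq V$, and openness from $\widetilde f(W_s(U,V))=f(U)V$ being a $\sigma$-neighborhood of $e_H$ since $N$ is $\sigma$-open. Your openness step is in fact stated more carefully than the paper's (which writes $f(U)V\subseteq V$ where the relevant containment is $f(U)V\supseteq V$), so nothing is missing.
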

\begin{proof}
Consider $G\times N$ endowed with the group topology $\alpha_s(\tau,\sigma\restriction_N)$. Then $i:G\to G\times N$, defined by $i(g)=(g,e_H)$ for every $g\in G$, is a topological embedding; indeed $W_s(U,V)\cap (G\times \{e_H\})=U\times\{e_H\}$ for every $W_s(U,V)\in\mathcal F_{s,(\tau,\sigma\restriction_N)}$.
Let $V\in\mathcal V_{(H,\sigma)}(e_H)$. There exists $V'\in\mathcal V_{(N,\sigma\restriction_N)}(e_H)$ such that $V'V'\subseteq V$. Since $f:(G,\tau)\to(H,\sigma)$ is continuous, there exists $U\in\VG$ such that $f(U)\subseteq V'$. Then $\widetilde f(W_s(U,V'))=f(U)V'\subseteq V'V'\subseteq V$ and so $\widetilde f$ is continuous. If $W_s(U,V)\in\mathcal F_{s,(\tau,\sigma\restriction_N)}$, then $\widetilde f(W_s(U,V))=f(U)V\subseteq V$ and so $\widetilde f$ is also open.
\end{proof}

In the introduction we have used the graph of a surjective homomorphism of topological groups $f:(G,\tau)\to(H,\sigma)$ to find a topological group $(\widetilde G,\widetilde\tau)$ of which $(G,\tau)$ is a topological subgroup and an open continuous homomorphism $\widetilde f:(\widetilde G,\widetilde \tau)\to (H,\sigma)$ extending $f$: in fact we have taken $\widetilde G=G\times H$ with the product topology and $\widetilde f=p_2$, since $\Gamma_f$ endowed with the topology inherited from $G\times H$ is topologically isomorphic to $(G,\tau)$. But the graph is not helpful when we ask $G$ to be also normal in $\widetilde G$, because the graph is a normal subgroup of $G\times H$ in a very particular case, as we show in the next remark. Anyway it helps in finding a first sufficient condition for a continuous surjective homomorphism to be semitopological (see Corollary \ref{graph}).

\begin{remark}\label{abelian}
Let $G,H$ be groups and $f:G\to H$ a homomorphism. If $H$ is abelian, then $\Gamma_f$ is a normal subgroup of $G\times H$. If $f$ is surjective, then $\Gamma_f$ is a normal subgroup of $G\times H$ if and only if $H$ is abelian.
\end{remark}

Theorem \ref{ext} and Remark \ref{abelian} imply the following corollary that gives a sufficient condition for a continuous surjective group homomorphism to be semitopological.

\begin{corollary}\label{graph}
Let $G,H$ be topological groups and $f:G\to H$ a continuous surjective homomorphism. If $H$ is abelian, then $f$ is semitopological.
\end{corollary}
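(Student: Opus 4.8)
The plan is to reuse the construction from the proof of Theorem \ref{ext} essentially verbatim and simply observe that the extra hypothesis on $H$ upgrades the subgroup $\Gamma_f$ from a topological subgroup to a topological \emph{normal} subgroup of $\widetilde G$, which is precisely the additional requirement imposed by Definition \ref{semitopdef}.

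Concretely, I would first set $\widetilde G=G\times H$ and $\widetilde\tau=\tau\times\sigma$, and recall (as in the proof of Theorem \ref{ext}, via \cite[Remark 2.12]{DGM}) that $j:G\to\Gamma_f$ given by $j(x)=(x,f(x))$ is a topological isomorphism onto $(\Gamma_f,\widetilde\tau\restriction_{\Gamma_f})$. Hence we may regard $(G,\tau)$ as the topological subgroup $\Gamma_f$ of $(\widetilde G,\widetilde\tau)$. Next, since $f$ is surjective and $H$ is abelian, Remark \ref{abelian} yields that $\Gamma_f$ is a normal subgroup of $\widetilde G$; thus, under the identification via $j$, $G$ is a topological \emph{normal} subgroup of $\widetilde G$. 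Finally I would take $\widetilde f=p_2:(\widetilde G,\widetilde\tau)\to(H,\sigma)$, which is an open continuous surjective homomorphism, and note that $\widetilde f(j(x))=p_2(x,f(x))=f(x)$ for every $x\in G$, so that $\widetilde f$ extends $f$. By Definition \ref{semitopdef} this exhibits $f$ as semitopological.

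There is no real obstacle here: the statement is a direct corollary, and the only point requiring (minimal) care is the bookkeeping of the identification of $G$ with $\Gamma_f$, so that the phrase ``$\widetilde f$ extends $f$'' is correctly read through the topological isomorphism $j$. Note also that, unlike in Theorem \ref{ext}, no Hausdorffness assumption is needed, since Definition \ref{semitopdef} does not ask $G$ to be closed in $\widetilde G$.
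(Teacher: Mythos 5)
Your proposal is correct and follows exactly the route the paper intends: Corollary \ref{graph} is stated in the text as an immediate consequence of Theorem \ref{ext} and Remark \ref{abelian}, i.e.\ one takes $\widetilde G=G\times H$ with the product topology, identifies $(G,\tau)$ with $(\Gamma_f,\widetilde\tau\restriction_{\Gamma_f})$ via $j$, uses the abelianness of $H$ to get normality of $\Gamma_f$, and takes $\widetilde f=p_2$. Your added remarks on the bookkeeping through $j$ and on Hausdorffness being unnecessary are accurate but do not change the argument.
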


The converse implication does not hold in general:

\begin{example}
Let $H$ be a discrete non-abelian group. Since every continuous surjective homomorphism $f:G\to H$, where $G$ is a topological group, is open, then $f$ is semitopological.
\end{example}

In general every non-open continuous surjective homomorphism of topological groups with abelian codomain shows that semitopological does not imply open.

\subsection{A-open homomorphisms}

Trying to find a characterization of semitopological homomorphisms, we introduce the following concept, which for continuous group isomorphisms is equivalent to semitopological. In the case of continuous surjective group homomorphisms we prove in Theorem \ref{semitophom} that it is a necessary condition.

\begin{deff}\label{A-open_def}
Let $(G,\tau)$ and $(H,\sigma)$ be topological groups. A homomorphism $f:(G,\tau)\to (H,\sigma)$ is \emph{A-open} if for every $U\in\mathcal V_{(G,\tau)}(e_G)$
\begin{itemize}
\item[(a)]there exists $V\in\mathcal V_{(H,\sigma)}(e_H)$ such that $f^{-1}(V)$ is $f^{-1}(f(U))$-thin;
\item[(b)]for every $g\in G$ there exists $V_g\in\mathcal V_{(H,\sigma)}(e_H)$ such that $[g,f^{-1}(V_g)]\subseteq
f^{-1}(f(U))$.
\end{itemize}
\end{deff}

Every open continuous homomorphism $f:(G,\tau)\to (H,\sigma)$ is A-open. In fact if $U\in\mathcal V_{(G,\tau)}(e_G)$, since $f(U)\in\mathcal V_{(H,\sigma)}(e_H)$, then: there exists $V\in\mathcal V_{(H,\sigma)}(e_H)$ such that $V^3\subseteq f(U)$ and so $f^{-1}(V)$ is $f^{-1}(f(U))$-thin by Lemma \ref{thin}(f); for every $g\in G$ there exists $V_g\in\mathcal V_{(H,\sigma)}(e_H)$ such that $[f(g),V_g]\subseteq f(U)$, being $f(g)\mapsto[f(g),x]$ ($x\in H$) continuous, and so $[g,f^{-1}(V_g)]\subseteq f^{-1}(f(U))$.

\begin{remark}\label{semitop+iso=A-open}
Theorem \ref{semitop} says that \emph{a continuous group isomorphism $f:(G,\tau)\to(H,\sigma)$ is semitopological if and only if $f$ is A-open}.
\end{remark}

In the next theorem we prove that the necessity holds also for continuous surjective group homomorphisms, i.e. that semitopological implies A-open for continuous surjective group homomorphisms. Hence from this theorem we get that open implies A-open, as we have proved directly before.

\begin{theorem}\label{semitophom}
Let $(G,\tau),(H,\sigma)$ be topological groups and $f:(G,\tau)\to(H,\sigma)$ a continuous surjective homomorphism. If $f$ is semitopological then $f$ is A-open.
\end{theorem}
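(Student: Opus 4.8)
The plan is to treat the A-extension provided by semitopologicity as a black box and push the internal conditions of Definition \ref{A-open_def} across $\widetilde f$, whose openness does all the work. So assume $f$ is semitopological and fix an A-extension $(\widetilde G,\widetilde\tau)$ of $f$ with associated open continuous homomorphism $\widetilde f:(\widetilde G,\widetilde\tau)\to(H,\sigma)$ extending $f$; by Proposition \ref{firstprop} we have $\ker f=G\cap\ker\widetilde f$, $\widetilde G=G\cdot\ker\widetilde f$, $[\ker\widetilde f,G]\subseteq\ker f$ and $\ker f\triangleleft\widetilde G$. Two elementary facts will be used throughout: (i) since $\widetilde f\restriction_G=f$, one has $f\amu(X)=\widetilde f\amu(X)\cap G$ for every $X\subseteq H$ and $\widetilde f\amu(\widetilde f(\widetilde U))=\widetilde U\cdot\ker\widetilde f$ for every $\widetilde U\subseteq\widetilde G$; (ii) if $\widetilde U\subseteq\widetilde G$ satisfies $\widetilde U\cap G\subseteq U$, then, since $\ker f\subseteq G$, $(\ker f\cdot\widetilde U)\cap G=\ker f\cdot(\widetilde U\cap G)\subseteq\ker f\cdot U=f\amu(f(U))$.

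To verify condition (a), fix $U\in\VG$, choose $\widetilde U\in\mathcal V_{(\widetilde G,\widetilde\tau)}(e_G)$ with $\widetilde U\cap G\subseteq U$, a symmetric $\widetilde U_1\in\mathcal V_{(\widetilde G,\widetilde\tau)}(e_G)$ with $\widetilde U_1\widetilde U_1\widetilde U_1\subseteq\widetilde U$, and put $U_1=\widetilde U_1\cap G\in\VG$ and $V=\widetilde f(\widetilde U_1)\in\VH$ (using that $\widetilde f$ is open). By (i) an arbitrary $x\in f\amu(V)$ can be written $x=wk$ with $w\in\widetilde U_1$, $k\in\ker\widetilde f$, $x\in G$. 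For $u\in U_1$, Proposition \ref{firstprop}(c) gives $kuk\amu=[k,u]\,u\in\ker f\cdot U_1$, so $kU_1k\amu\subseteq\ker f\cdot U_1$, and therefore
$$xU_1x\amu=w(kU_1k\amu)w\amu\subseteq(w\ker f\,w\amu)(wU_1w\amu)=\ker f\cdot(wU_1w\amu)\subseteq\ker f\cdot\widetilde U,$$
using $\ker f\triangleleft\widetilde G$ and $wU_1w\amu\subseteq w\widetilde U_1w\amu\subseteq\widetilde U_1\widetilde U_1\widetilde U_1\subseteq\widetilde U$. Since also $xU_1x\amu\subseteq G$, fact (ii) gives $xU_1x\amu\subseteq f\amu(f(U))$ for every $x\in f\amu(V)$, i.e.\ $f\amu(V)$ is $f\amu(f(U))$-thin by the characterisation of $U$-thin sets recalled in \S\ref{Uthin}.

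For condition (b), fix $U\in\VG$ and $g\in G$, and take $\widetilde U\in\mathcal V_{(\widetilde G,\widetilde\tau)}(e_G)$ with $\widetilde U\cap G\subseteq U$. Since $\widetilde G$ is a topological group, $z\mapsto[g,z]$ is continuous and sends $e_G$ to $e_G$, so there is $\widetilde W\in\mathcal V_{(\widetilde G,\widetilde\tau)}(e_G)$ with $[g,w]\in\widetilde U$ for all $w\in\widetilde W$; set $V_g=\widetilde f(\widetilde W)\in\VH$. By (i) an arbitrary $y\in f\amu(V_g)$ is $y=wk$ with $w\in\widetilde W$, $k\in\ker\widetilde f$, $y\in G$. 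From the identity $[g,wk]=[g,w]\cdot w[g,k]w\amu$, together with $[g,w]\in\widetilde U$ and $[g,k]=[k,g]\amu\in[\ker\widetilde f,G]\subseteq\ker f$ (whence $w[g,k]w\amu\in\ker f$ by normality), we get $[g,y]\in\widetilde U\cdot\ker f=\ker f\cdot\widetilde U$; since $[g,y]\in G$, fact (ii) yields $[g,y]\in f\amu(f(U))$ for every $y\in f\amu(V_g)$, which is condition (b).

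The only delicate point — and the place I expect to be the main obstacle — is the interplay between $G$ and $\widetilde G$: an A-extension need not carry a product topology, so nothing concrete is known about $\mathcal V_{(\widetilde G,\widetilde\tau)}(e_G)$, and the computations above produce subsets of $\widetilde G$ that a priori stick out of $G$. What rescues them is precisely that $\ker f$ is normal in the whole of $\widetilde G$ (Proposition \ref{firstprop}(d)), so that conjugation by elements $w\in\widetilde G$ that need not lie in $G$ gets absorbed, together with the trimming identity $(\ker f\cdot\widetilde U)\cap G=\ker f\cdot(\widetilde U\cap G)$ of fact (ii), which sends everything back inside $f\amu(f(U))=U\cdot\ker f\subseteq G$. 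Everything else is the familiar $\widetilde U_1^3$-trick and the decomposition $\widetilde G=G\cdot\ker\widetilde f$.
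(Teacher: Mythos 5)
Your proof is correct and follows essentially the same route as the paper's: pick a neighborhood of $e_{\widetilde G}$ tracing $U$ on $G$, take $V=\widetilde f(\widetilde U_1)$ (resp.\ $V_g=\widetilde f(\widetilde W)$) using openness of $\widetilde f$, decompose each $x\in f\amu(V)\subseteq\widetilde U_1\cdot\ker\widetilde f$ as $wk$, and absorb the $\ker\widetilde f$-factor via $[\ker\widetilde f,G]\subseteq\ker f$ together with the normality of $\ker f$ in $\widetilde G$. The only cosmetic difference is that you argue with sets and the trimming identity $(\ker f\cdot\widetilde U)\cap G=\ker f\cdot(\widetilde U\cap G)$, while the paper applies $f$ directly to the conjugated elements; the ingredients and their roles are identical.
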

\begin{proof}
Let $(\widetilde G,\widetilde\tau)$ with $\widetilde f:\widetilde G\to H$  be an A-extension of $f:(G,\tau)\to(H,\sigma)$. Let $U\in\mathcal V_{(G,\tau)}(e_G)$.  There exists a neighborhood $W$ of $e_G=e_{\widetilde G}$ in $(\widetilde G,\widetilde\tau)$ such that $W\cap G=U$.

There exists a symmetric $W'\in\mathcal V_{(\widetilde G,\widetilde\tau)}(e_G)$ such that $W'W'W'\subseteq W$. Define $\widetilde f(W')=V\in \mathcal V_{(H,\sigma)}(e_H)$ and $U'=W'\cap G\in\mathcal V_{(G,\tau)}(e_G)$. We prove that $f^{-1}(V)$ is $f^{-1}(f(U))$-thin. Equivalently we show that $x U' x^{-1}\subseteq f^{-1}(f(U))$ for every $x\in f^{-1}(V)$. Let $x\in f^{-1}(V)$ and $g\in U'$. Since $f^{-1}(V)= f^{-1}(\widetilde f(W'))\subseteq\widetilde f^{-1}(\widetilde f(W'))=W'\ker\widetilde f$, there exist $\widetilde x\in W$ and $\widetilde b\in \ker\widetilde f$ such that $x=\widetilde x\widetilde b$. Then $$f(x g x^{-1})=f(\widetilde x \widetilde b g \widetilde b^{-1}\widetilde x^{-1})=f(\widetilde x g \widetilde x^{-1})$$ and $\widetilde x g \widetilde x^{-1}\in W\cap G=U$. Hence $f(x g x^{-1}) \in f(U)$ and so $x g x^{-1}\in f^{-1}(f(U))$.

For every $g\in G$ there exists a neighborhood $W'\in\mathcal V_{(\widetilde G,\widetilde\tau)}(e_G)$ of $e_{\widetilde G}$ in $\widetilde G$ such that $[g,W']\subseteq W$ by the continuity of $[g,-]:\widetilde G\to \widetilde G$. Define $V_g=\widetilde f(W')\in\mathcal V_{(H,\sigma)}(e_H)$. We prove that $[g,f^{-1}(V_g)]\subseteq f^{-1}(f(U))$. Let $x\in f^{-1}(V_g)$; as noted previously there exist $\widetilde x\in W'$ and $\widetilde b\in\ker\widetilde f$ such that $x=\widetilde x\widetilde b$. Therefore $$f(g x g^{-1}x^{-1})=f(g\widetilde x\widetilde b g^{-1}\widetilde b^{-1}\widetilde x^{-1})=f(g\widetilde x g^{-1}\widetilde x^{-1}),$$ where $g\widetilde x g^{-1}\widetilde x^{-1}\in W\cap G=U$.
\end{proof}

\subsection{A$^*$-open and strongly A$^*$-open homomorphisms}

Since we have proved only that for continuous surjective group homomorphisms A-open is weaker than semitopological and we would like to have a condition that implies semitopological, in Definition \ref{A*-open_def} we give two properties stronger than A-open.

\begin{deff}
A map $t:H\to G$ between topological groups is a \emph{quasihomomorphism} if $t(e_H)=e_G$ and the maps
\begin{itemize}
	\item $t':H\times H\to G$, defined by $t'(h_1,h_2)=t(h_1)t(h_2)t(h_1 h_2)^{-1}$ for every $h_1,h_2\in H$, and
	\item $t_y:H\to G$ for every $y\in H$, obtained putting $t_y(h)=t(y)t(h)t(y)^{-1}t(y h y^{-1})^{-1}$ for every $h\in H$,
\end{itemize}
are continuous at $(e_H,e_H)$ and $e_H$ respectively.
\end{deff}

This definition is the counterpart of the definition of quasihomomorphism given in \cite{Cab} for abelian groups. In fact, if $H$ and $G$ are abelian, $t_y$ is the identity map for every $y\in H$ and the continuity of $t'$ at $(e_H,e_H)$ is exactly the condition given in \cite{Cab}. 

If a map $t:H\to G$ between topological groups is a homomorphism or simply $t$ is continuous at $e_H$, then $t$ is a quasihomomorphism.

\begin{remark}\label{quasihom=hom}
A map $t:(H,\iota_H)\to (G,\delta_G)$ is a quasihomomorphism if and only if it is a homomorphism. In fact, if $t$ is a quasihomomorphism, then $t'(H,H)=\{e_G\}$ and $t_{e_H}(H)=\{e_G\}$. Therefore $t$ is a homomorphism.
\end{remark}

\begin{claim}\label{initial->quasihom}
Let $f:(G,\tau)\to(H,\sigma)$ be a continuous surjective homomorphism of topological groups. If $\ker f\subseteq N_\tau$, then every section $s$ of $f$ is a quasihomomorphism.
\end{claim}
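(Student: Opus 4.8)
The plan is to exploit the fact that in any topological group the closure of the neutral element is the intersection of all its neighborhoods, so that the hypothesis $\ker f\subseteq N_\tau$ forces $\ker f$ to lie inside \emph{every} neighborhood of $e_G$ in $(G,\tau)$. Once this is in place, the verification that an arbitrary section $s$ of $f$ is a quasihomomorphism becomes essentially automatic, since both auxiliary maps $s'$ and $s_y$ take values in $\ker f$.

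\smallskip
First I would recall (or reprove in one line, passing to symmetric neighborhoods) that $N_\tau=\overline{\{e_G\}}^\tau=\bigcap\{U:U\in\mathcal V_{(G,\tau)}(e_G)\}$; combined with $\ker f\subseteq N_\tau$ this yields $\ker f\subseteq U$ for every $U\in\mathcal V_{(G,\tau)}(e_G)$. Next, let $s:H\to G$ be a section of $f$. By definition $s(e_H)=e_G$, which is the first requirement for a quasihomomorphism. Using that $f$ is a homomorphism and $f\circ s=1_H$, for all $h_1,h_2\in H$ one computes
$$f\big(s(h_1)s(h_2)s(h_1h_2)^{-1}\big)=h_1h_2(h_1h_2)^{-1}=e_H,$$
so that $s'(h_1,h_2)\in\ker f$, i.e. $s'(H\times H)\subseteq\ker f$. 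Since $s'(e_H,e_H)=e_G$, given any $U\in\mathcal V_{(G,\tau)}(e_G)$ we have $s'(H\times H)\subseteq\ker f\subseteq U$, whence $s'$ is continuous (globally, and a fortiori at $(e_H,e_H)$). The analogous computation $f\big(s(y)s(h)s(y)^{-1}s(yhy^{-1})^{-1}\big)=yhy^{-1}(yhy^{-1})^{-1}=e_H$ gives $s_y(H)\subseteq\ker f$ for each $y\in H$, while $s_y(e_H)=e_G$ because $s(ye_Hy^{-1})=s(e_H)=e_G$; hence $s_y$ likewise maps $H$ into every neighborhood of $e_G$ and so is continuous at $e_H$. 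Therefore $s$ is a quasihomomorphism.

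\smallskip
I do not expect a genuine obstacle here: the only points needing a little care are the standard identity $N_\tau=\bigcap\{U:U\in\mathcal V_{(G,\tau)}(e_G)\}$ and the two commutator-type computations, both of which become routine once one remembers that a section composed with $f$ is the identity of $H$ and that $f$ is a homomorphism.
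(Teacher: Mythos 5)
Your proof is correct and follows essentially the same route as the paper's: both arguments reduce the claim to the observation that $\ker f\subseteq N_\tau$ forces $\ker f\subseteq U$ for every $U\in\mathcal V_{(G,\tau)}(e_G)$, and then check via $f\circ s=1_H$ that $s'$ and each $s_y$ take values in $\ker f$, which gives continuity at the identity for free. The extra remark that $N_\tau=\bigcap\{U:U\in\mathcal V_{(G,\tau)}(e_G)\}$ is exactly what the paper uses implicitly (cf.\ Remark \ref{initial}), so there is nothing to add.
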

\begin{proof}
For all $h_1,h_2\in H$ $$f(s'(h_1,h_2))=f(s(h_1)s(h_2)s(h_1h_2)\amu)=f(s(h_1))f(s(h_2))f(s(h_1h_2))\amu=e_H$$ and so $s'(h_1,h_2)\in \ker f$. Since $\ker f\subseteq U$ for each $U\in\VG$, $s'$ is continuous at $e_H$. Analogously let $y\in H$; for every $h\in H$ $$f(s_y(h))=f(s(y)s(h)s(y)\amu s(y h y \amu) \amu) = e_H$$ and so $s_y(h)\in\ker f$. Since $\ker f\subseteq U$ for each $U\in\VG$, $s_y$ is continuous at $e_H$. 
\end{proof}

\begin{deff}\label{A*-open_def}
Let $(G,\tau)$ and $(H,\sigma)$ be topological groups. A surjective homomorphism $f:(G,\tau)\to (H,\sigma)$
is \emph{A$^*$-open} if there exists a section $s$ of $f$ such that for every $U\in\mathcal V_{(G,\tau)}(e_G)$
\begin{itemize}
\item[(a)]there exists $V\in\mathcal V_{(H,\sigma)}(e_H)$ such that $s(V)$ is $U$-thin;
\item[(b)]for every $g\in G$ there exists $V_g\in\mathcal V_{(H,\sigma)}(e_H)$ such that $[g,s(V_g)]\subseteq U$.
\end{itemize}
If $s$ is also a quasihomomorphism, $f$ is \emph{strongly A$^*$-open}.
\end{deff}

Note that $s'(H\times H)\cup\bigcup_{y\in H}s_y(H)\subseteq \ker f$.

\medskip
For continuous group isomorphisms the conditions A$^*$-open and strongly A$^*$-open coincide and they coincide also with A-open. For homomorphisms only one implication holds:

\begin{proposition}\label{A*op->A-op}
Let $(G,\tau),(H,\sigma)$ be topological groups and $f:(G,\tau)\to (H,\sigma)$ a continuous surjective homomorphism. If $f$ is A$^*$-open, then $f$ is A-open.
\end{proposition}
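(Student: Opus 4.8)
The plan is to recycle the neighborhoods furnished by a section $s$ witnessing A$^*$-openness, using only the identity $f(s(v))=v$ to absorb the $\ker f$-ambiguity created when one replaces $s(\cdot)$ by $f^{-1}(\cdot)$; in particular the quasihomomorphism structure, which is exactly what separates A$^*$-open from strongly A$^*$-open, plays no role here.

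Fix a section $s$ of $f$ as in Definition \ref{A*-open_def} and let $U\in\VG$. For clause (a) of Definition \ref{A-open_def} I would take a $V\in\VH$ with $s(V)$ $U$-thin and, via the reformulation of thinness recorded in \S\ref{Uthin}, a $U_1\in\VG$ with $s(v)U_1s(v)^{-1}\subseteq U$ for every $v\in V$. The claim is that this same $U_1$ witnesses that $f^{-1}(V)$ is $f^{-1}(f(U))$-thin: if $x\in f^{-1}(V)$ and $v:=f(x)\in V$, then $xU_1x^{-1}\subseteq f^{-1}(f(U))$ follows from
$$f(xU_1x^{-1})=vf(U_1)v^{-1}=f(s(v))f(U_1)f(s(v))^{-1}=f\bigl(s(v)U_1s(v)^{-1}\bigr)\subseteq f(U).$$
For clause (b), fix in addition $g\in G$ and take $V_g\in\VH$ with $[g,s(V_g)]\subseteq U$. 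For $x\in f^{-1}(V_g)$ and $v:=f(x)\in V_g$ one has
$$f([g,x])=f(g)\,v\,f(g)^{-1}v^{-1}=f(g)f(s(v))f(g)^{-1}f(s(v))^{-1}=f([g,s(v)])\in f(U),$$
since $[g,s(v)]\in U$; hence $[g,x]\in f^{-1}(f(U))$, i.e. $[g,f^{-1}(V_g)]\subseteq f^{-1}(f(U))$. Both clauses hold, so $f$ is A-open.

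There is no genuine obstacle; the only point that needs care is that the brackets and conjugations in clauses (a)--(b) of Definitions \ref{A-open_def} and \ref{A*-open_def} must be read elementwise --- as the sets $\{xU_1x^{-1}\}$ and $\{[g,x]\}$ --- rather than as generated subgroups, since $f^{-1}(f(U))$ is in general not a subgroup. Granting that, the argument is just the two remarks that $f$ identifies $s(v)$ with $v=f(x)$ and that a preimage $f^{-1}(Y)$ is $f$-saturated, so that membership in it may be tested after applying $f$.
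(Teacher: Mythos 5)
Your proof is correct and follows essentially the same route as the paper's: both replace $x\in f^{-1}(V)$ (resp.\ $x\in f^{-1}(V_g)$) by $s(f(x))$ after applying $f$, using $f(x)=f(s(f(x)))$, and then test membership in the saturated set $f^{-1}(f(U))$. The only quibble is your closing caveat about reading the brackets elementwise: the paper's stated convention actually takes $[g,K]$ to be the \emph{generated subgroup}, but your computation covers that case too, since $f$ of the generated subgroup is the subgroup generated by the images, which lies in $f(U)$ by the A$^*$-open hypothesis, so the whole subgroup still lands in $f^{-1}(f(U))$.
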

\begin{proof}
There exists a section $s$ of $f$ that witnesses that $f$ is A$^*$-open. Let $U\in\VG$. There exists $V\in\VH$ such that $s(V)$ is $U$-thin. We prove that $f\amu(V)$ is $f\amu(f(U))$-thin. There exists $U'\in\VG$ such that $x U' x\amu\subseteq U$ for every $x\in s(V)$. Then $f\amu(f(x U' x\amu))\subseteq f\amu(f(U))$ for every $x\in s(V)$. For every $y\in f\amu(V)$ there exists $x\in s(V)$ such that $y U' y\amu\subseteq f\amu(f( x U' x\amu))=f\amu(f(x) f(U') f(x)\amu)$. Indeed, if $y\in f\amu(V)$, then $f(y)=v\in V$. Put $x=s(v)\in s(V)$. Thus $f(y)=f(x)$ and so, for every $u\in U'$, $$f(y u y\amu)= f(y) f(u) f(y)\amu=f(x) f(u) f(y)\in f(x) f(U') f(x).$$ This means that $y u y\amu\in f\amu(f(x) f(U') f(x)\amu)=f\amu(f(x U' x\amu))$ for every $u\in U'$. Hence $y U' y\amu\subseteq f\amu(f(U))$ for every $y\in f\amu(V)$.

Let $g\in G$. There exists $V_g\in\VH$ such that $[g,s(V_g)]\subseteq U$. Then $$f\amu([f(g),V_g])=f\amu([f(g),f(s(V_g))])=f\amu(f([g,s(V_g)]))\subseteq f\amu(f(U)).$$ Let $y\in f\amu(V_g)$. Then $f([g,y])=[f(g),f(y)]\in [f(g),V_g]$ and this yields $$[g,f\amu(V_g)]\subseteq f\amu([f(g),V_g])\subseteq f\amu(f(U)),$$ which completes the proof.
\end{proof}

Example \ref{Aop-<A*op} shows that the converse implication of this proposition does not hold in general.
Moreover we don't know whether in general open implies A$^*$-open for continuous surjective homomorphisms of topological groups.
The question is open in general but we have positive answer in some particular cases: for example when the continuous surjective homomorphism of topological groups $f:(G,\tau)\to (H,\sigma)$ is an isomorphism or when $\ker f\subseteq N_\tau$.

\begin{remark}\label{initial}
Let $f:(G,\tau)\to (H,\sigma)$ be a continuous surjective homomorphism. If $\ker f\subseteq N_\tau$, i.e. $\ker f\subseteq U$ for every $U\in\VG$, then $\tau$ is the initial topology of $\tau_q$ on $G/\ker f$. In this situation it is possible to think that $U\in\VG$ is such that $U=q\amu(q(U))$, where $q:G\to G/\ker f$ is the canonical projection. Continuous isomorphisms trivially satisfy this condition.
\end{remark}

Let $f:G\to H$ be a surjective homomorphism. There exists a section $s$ of $f$ which is a homomorphism if and only if $G$ is isomorphic to the semidirect product $\ker f\ltimes H$. In this case, when the topology on $G\cong \ker f\ltimes H$ is the product topology, strongly A$^*$-open obviously coincides with A$^*$-open.

\begin{proposition}\label{open+s-hom->strA*op}
For a continuous surjective homomorphism of topological groups $f:(G,\tau)\to (H,\sigma)$, in case $(G,\tau)\cong(\ker f\ltimes H,\tau\restriction_{\ker f}\times \tau\restriction_H)$, if $f$ is open (i.e. $\tau\restriction_H=\sigma$) then $f$ is strongly A$^*$-open (i.e. A$^*$-open).
\end{proposition}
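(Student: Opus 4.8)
The plan is to produce the obvious section of $f$ and check that it witnesses the two conditions of Definition~\ref{A*-open_def}. Identify $(G,\tau)$ with the topological semidirect product $(\ker f\ltimes H,\ \tau\restriction_{\ker f}\times\tau\restriction_H)$; under this identification $f$ is the canonical projection $(k,h)\mapsto h$, and the hypothesis that $f$ is open is exactly the statement $\tau\restriction_H=\sigma$. I would take $s\colon H\to G$ defined by $s(h)=(e_{\ker f},h)$: it is injective, it satisfies $f(s(h))=h$ and $s(e_H)=e_G$, hence it is a section; since $\{e_{\ker f}\}\times H$ is a subgroup of $\ker f\ltimes H$ it is in fact a group homomorphism; and, the topology on $G$ being the product topology, $s$ is a topological embedding, in particular continuous. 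Being a homomorphism, $s$ is a fortiori a quasihomomorphism, so it is enough to verify conditions (a) and (b) of A$^*$-openness for this particular $s$; once this is done $f$ is strongly A$^*$-open, which in this situation coincides with A$^*$-open by the remark made just before the statement. (Note that the product shape of $\tau$ enters only to make $s$ continuous, while openness of $f$ enters only to guarantee that the neighborhoods of $e_H$ manufactured below are $\sigma$-neighborhoods.)

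For condition (a) the plan is to use the joint continuity of the map $\psi\colon H\times G\to G$, $\psi(v,x)=s(v)\,x\,s(v)^{-1}$, which is a composition of $s$ with the continuous group operations of $G$ and satisfies $\psi(e_H,e_G)=e_G$. Given $U\in\VG$, continuity of $\psi$ at $(e_H,e_G)$ yields a neighborhood $V$ of $e_H$ in $(H,\tau\restriction_H)=(H,\sigma)$ and $U_1\in\VG$ such that $s(v)\,U_1\,s(v)^{-1}\subseteq U$ for every $v\in V$; by the characterization of $U$-thin sets recalled in \S\ref{Uthin} this says precisely that $s(V)$ is $U$-thin. (If one wants an explicit computation instead: conjugation by $s(v)=(e_{\ker f},v)$ sends $(k,h)$ to $({}^{v}k,\,vhv^{-1})$, so for a basic neighborhood $U'\times U''\subseteq U$ one chooses $U_1=U_1'\times U_1''$ and $V$ small enough that ${}^{v}U_1'\subseteq U'$ for $v\in V$, using continuity of the action $H\times\ker f\to\ker f$ at $(e_H,e_{\ker f})$, and $vU_1''v^{-1}\subseteq U''$ for $v\in V$, using continuity of conjugation in $H$.)

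For condition (b), fix $U\in\VG$ and $g\in G$: the map $H\to G$ given by $h\mapsto[g,s(h)]=g\,s(h)\,g^{-1}\,s(h)^{-1}$ is again a composition of $s$ with the group operations of $G$, hence continuous, and it sends $e_H$ to $[g,e_G]=e_G\in U$, so there is a neighborhood $V_g$ of $e_H$ in $(H,\tau\restriction_H)=(H,\sigma)$ with $[g,s(V_g)]\subseteq U$. This finishes the verification. The argument is short and I do not anticipate a real obstacle; the only mildly delicate step is (a), where one has to rewrite the thinness of $s(V)$ in the form $s(v)U_1s(v)^{-1}\subseteq U$ for all $v\in V$ and read it off from joint continuity of $\psi$, remembering that the neighborhood of $e_H$ so obtained is a $\sigma$-neighborhood precisely because $f$ is open.
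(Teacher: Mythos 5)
Your proposal is correct and follows essentially the same route as the paper: both take the canonical section $s(h)=(e_{\ker f},h)$, observe it is a continuous homomorphism (hence a quasihomomorphism) because the topology is the product topology, and then verify conditions (a) and (b) of Definition \ref{A*-open_def}. The only difference is stylistic --- you package the verification as continuity of the conjugation map $(v,x)\mapsto s(v)xs(v)^{-1}$ and of $h\mapsto[g,s(h)]$ at the identity, whereas the paper does the same estimate by explicit neighborhood manipulations of the form $U'U'U'\subseteq U$ with $s(V)\subseteq U'$.
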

\begin{proof}
Let $L=\ker f$ and suppose without loss of generality that $G=L\ltimes H$; then $f=p_2$ and $s:H\to G$, defined by $s(h)=(e_L,h)$ for every $h\in H$, is a section of $f$ and a homomorphism. So it suffices to verify that $s$ makes $f$ A$^*$-open. Let $U\in\VG$. There exists $U'=W\times V\in\VG$, where $W\in\mathcal V_{(L,\tau\restriction_L)}(e_L)$ and $V\in\VH$, such that $U'U'U'\subseteq U$. Since $s(V)\subseteq U'$, then $s(v) U's(v)\amu\subseteq U$ for every $v\in V$, that is $s(V)$ is $U$-thin. Let $g\in G$. There exists $U'\in\VG$ such that $U'U'\subseteq U$ and there exists $U_g=W_g\times V_g\in\VG$, where $W_g\in\mathcal V_{(L,\tau\restriction_L)}(e_L)$ and $V_g\in\VH$, such that $U_g\subseteq U'$ and $g U_g g\amu\subseteq U'$. Let $v\in V_g$. Then, since $s(V_g)\subseteq U_g$, $$[g,s(v)]=g s(v) g\amu s(v)\amu \in g s(V_g) g\amu s(V_g)\subseteq g U_g g\amu U_g\subseteq U'U'\subseteq U.$$ Hence $[g,s(V_g)]\subseteq U$ and this completes the proof that $f$ is strongly A$^*$-open.
\end{proof}

Example \ref{c,non-c*} shows that the existence of a section which is a quasihomomorphism of a continuous surjective group homomorphism $f$ does not yield in general that $f$ is A$^*$-open.

\medskip
Theorem \ref{c*-section} is one of the main results of this paper. The technique used in its proof is inspired by that of the proof in \cite{Ar} of Theorem \ref{semitop}.

\begin{theorem}\label{c*-section}
Let $(G,\tau),(H,\sigma)$ be topological groups, $N$ a $\sigma$-open subgroup of $H$, $f:(G,\tau)\to(H,\sigma)$ a continuous surjective homomorphism and $s$ a section of $f\restriction_{f\amu(N)}:f\amu(N)\to N$.
Then $\mathcal F_{s,(\tau,\sigma\restriction_N)}$ is a base of the neighborhoods of $(e_G,e_H)$ in $G\times N$ of a group topology $\alpha_s(\tau,\sigma\restriction_N)$ on $G\times N$ if and only if $s$ makes $f$ strongly A$^*$-open.
\end{theorem}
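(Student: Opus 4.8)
The plan is to apply the classical criterion for a filter base on a group to be a base of neighbourhoods of the identity of a (necessarily unique) group topology, to the filter base $\mathcal F_{s,(\tau,\sigma\restriction_N)}$ on $G\times N$: namely that (i) for every $W$ in the base there is $W'$ with $W'W'\subseteq W$; (ii) for every $W$ there is $W'$ with $(W')^{-1}\subseteq W$; (iii) for every $W$ and every $(g,h)\in G\times N$ there is $W'$ with $(g,h)W'(g,h)^{-1}\subseteq W$. That $\mathcal F_{s,(\tau,\sigma\restriction_N)}$ is a filter base with $(e_G,e_H)$ in each of its members is immediate from $W_s(U_1\cap U_2,V_1\cap V_2)\subseteq W_s(U_1,V_1)\cap W_s(U_2,V_2)$ and $s(e_H)=e_G$, so the whole statement reduces to showing that (i)--(iii) hold if and only if $s$ makes $f$ strongly A$^*$-open. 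The computational backbone is provided by the three ``defect'' identities $s(v_1)s(v_2)=s'(v_1,v_2)s(v_1v_2)$, $s(v)^{-1}=s(v^{-1})s'(v,v^{-1})^{-1}$ and $s(h)s(v)s(h)^{-1}=s_h(v)s(hvh^{-1})$ (for $v,v_1,v_2,h\in N$). Using them, each of $W_s(U',V')W_s(U',V')$, $W_s(U',V')^{-1}$ and $(g,h)W_s(U',V')(g,h)^{-1}$ can be rewritten as a set of pairs in the normal form $(u''s(v''),v'')$: with $v''$ equal to $v_1v_2$, $v^{-1}$, $hv'h^{-1}$ respectively, and $u''$ equal to $u_1\,(s(v_1)u_2s(v_1)^{-1})\,s'(v_1,v_2)$, $s(v^{-1})\,(s'(v,v^{-1})^{-1}u^{-1})\,s(v^{-1})^{-1}$, $(gu'g^{-1})\,[g,s(v')]\,[s(v'),s(h)]\,s_h(v')$ respectively.

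For the implication ``$s$ makes $f$ strongly A$^*$-open $\Rightarrow$ (i)--(iii)'' I would verify the three axioms from these formulas, working throughout with symmetric neighbourhoods of $e_H$ contained in $N$ (permissible since $N$ is $\sigma$-open). Axiom (i): splitting the target into $(U_0'')^3$, use condition (a) of Definition \ref{A*-open_def} to find $V'$ and $U'\subseteq U_0''$ with $s(v_1)U's(v_1)^{-1}\subseteq U_0''$ on $V'$ (which absorbs the conjugated factor $s(v_1)u_2s(v_1)^{-1}$), use continuity of $s'$ at $(e_H,e_H)$ to shrink $V'$ so that $s'(V',V')\subseteq U_0''$, and shrink $V'$ so that $V'V'$ lies in the second-coordinate target. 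Axiom (ii) is parallel: the free element $s'(v,v^{-1})^{-1}u^{-1}$ is made small by continuity of $s'$ together with $u$ small, and is then conjugated by $s(v^{-1})$, again absorbed by condition (a) (here symmetry of $V'$ is used so that $v^{-1}$ ranges over the thin-witnessing neighbourhood). Axiom (iii) is the crux: splitting the target into $(U_0'')^4$, make $gu'g^{-1}$ small by continuity of conjugation by $g$ in $G$, $[g,s(v')]$ small by condition (b) for the given $g$, $[s(v'),s(h)]=[s(h),s(v')]^{-1}$ small by condition (b) for the element $s(h)$, and $s_h(v')$ small by continuity of $s_h$ at $e_H$ --- it is precisely here that the ``strongly'' part of the hypothesis is used --- while shrinking $V'$ so that $hV'h^{-1}$ lies in the second-coordinate target.

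For the converse, assume $\alpha_s(\tau,\sigma\restriction_N)$ is a group topology, so (i)--(iii) hold, and recover the A$^*$-open data by feeding distinguished elements into the same formulas. From (i), taking $u_1=e_G$ and $v_2=e_H$ (so that $s'(v_1,e_H)=e_G$) gives $s(v_1)U's(v_1)^{-1}\subseteq U''$ for all $v_1\in V'$, i.e. condition (a); taking $u_1=u_2=e_G$ gives $s'(V',V')\subseteq U''$, i.e. continuity of $s'$ at $(e_H,e_H)$. From (iii) with $h=e_H$ and $u'=e_G$ (the $s(h)$-commutator and $s_h$ factors being then trivial) one gets $[g,s(v')]\in U''$ for all $v'\in V'$, i.e. condition (b); and applying (iii) to the element $(s(y),y)$ --- which is legitimate exactly because $y\in N$ --- with $u'=e_G$, the two commutator factors cancel and one is left with $s_y(v')\in U''$ for all $v'\in V'$, i.e. continuity of $s_y$ at $e_H$. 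Hence $s$ is a quasihomomorphism witnessing that $f$ is A$^*$-open, so $s$ makes $f$ strongly A$^*$-open, which closes the equivalence.

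The step I expect to be the main obstacle is the conjugation axiom (iii): forcing $(g,h)W_s(U',V')(g,h)^{-1}$ into the normal form with second coordinate $hv'h^{-1}$ requires inserting the $s_h$-identity in exactly the right place and yields the four-factor expression above, so one must simultaneously have condition (b) available for the fixed element $g$ and for $s(h)$, use continuity of $s_h$, and keep every neighbourhood symmetric and inside $N$ so that the substitutions $v'\mapsto v'^{-1}$ and $v'\mapsto hv'h^{-1}$ stay inside the chosen sets. A secondary point worth flagging is that one needs --- and, in the converse, obtains --- continuity of $s_y$ only for $y\in N$, consistently with $s$ being a section of $f\restriction_{f^{-1}(N)}:f^{-1}(N)\to N$ rather than of $f$ itself.
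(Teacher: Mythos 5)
Your proposal is correct and follows essentially the same route as the paper's proof: both directions are handled by testing the three neighbourhood-base axioms for $\mathcal F_{s,(\tau,\sigma\restriction_N)}$ against the quasihomomorphism and A$^*$-open conditions via the same normal-form identities, and your specializations ($u_1=e_G$, $v_2=e_H$; $h=e_H$; the element $(s(y),y)$) are exactly the ones the paper uses. The only notable variation is that you extract condition (a) (thinness of $s(V')$) from the product axiom by taking $v_2=e_H$, which is a little shorter than the paper's derivation from the inverse axiom via the auxiliary map $s''(h)=s(h)s(h^{-1})$; everything else, including restricting the continuity of $s_y$ to $y\in N$, matches.
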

\begin{proof}
Suppose that $\mathcal F_{s,(\tau,\sigma\restriction_N)}$ (see Definition \ref{alpha_s}) is a base of the neighborhoods of $(e_G,e_H)$ in $G\times N$ of a group topology $\alpha_s(\tau,\sigma\restriction_N)$ on $G\times N$ and let $U\in\VG$. So $W(U,V)=W_s(U,V)\in\mathcal F_{s,(\tau,\sigma\restriction_N)}$ for some $V\in\mathcal V_{(N,\sigma\restriction_N)}(e_H)$. Then there exists $W(U',V')\in\mathcal F_{s,(\tau,\sigma\restriction_N)}$ such that $W(U',V')W(U',V')\subseteq W(U,V)$. Therefore $(u s(v), v)(u' s(v'), v')\in W(U,V)$ for every $u,u'\in U',v,v'\in V'$ and this yields $$(u s(v) u' s(v'),v v')=(u s(v) u' s(v') s(v v')^{-1} s(v v'),v v')\in W(U,V).$$ Choosing $u=u'=e_G$ we get $s(v) s(v') s(v v')^{-1}\in U$ for every $(v,v')\in V' \times V'$  and so $s'$ is continuous at $(e_H,e_H)$.

Let $y\in H$, $x\in G$ and $W(U,V)\in\mathcal F_{s,(\tau,\sigma\restriction_N)}$. Then there exists $W(U',V')\in\mathcal F_{s,(\tau,\sigma\restriction_N)}$ such that $$(x,y)W(U',V')(x,y)^{-1}\subseteq W(U,V).$$ Hence $(x,y)(u s(v),v)(x^{-1},y^{-1})\in W(U,V)$ for every $u\in U',v\in V'$ and so $$W(U,V)\ni (x u s(v) x^{-1},y v y^{-1})=(x u s(v) x^{-1} s(y v y^{-1})^{-1} s(y v y^{-1}),y v y^{-1}).$$ Taking $x=s(y)$ and $u=e_G$ we get in particular $s(y)s(v)s(y)^{-1}s(y v y^{-1})^{-1}\in U$ for every $v\in V'$ and hence $s_y$ is continuous in $e_H$.

This proves that $s$ is a quasihomomorphism.

\smallskip
To prove that $s$ makes $f$ strongly A$^*$-open it remains to prove that $s$ makes $f$ A$^*$-open. Let $U\in\VG$ and $V\in\VH$. Then there exists $U'\in\VG$ such that $U'$ is symmetric and $U'U'\subseteq U$. Let $$s'':H\to G\ \text{defined by} \ s''(h)=s'(h,h\amu)=s(h)s(h\amu)\ \text{for every}\ h\in H.$$ The continuity of $s'$ at $(e_H,e_H)$ implies the continuity of $s''$ at $e_H$ because $s(e_H)=e_G$. Since $s''$ is continuous at $e_H$, so there exists a symmetric $V'\in\VH$ such that $s''(V')\subseteq U$. By the hypothesis there exist $U''\in\VG$ and a symmetric $V''\in\VH$ such that $V''\subseteq V'$ and $W(U'',V'')\amu\subseteq W(U',V)$. Hence $(u s(v),v)\amu=(s(v)\amu u\amu, v\amu)\in W(U',V)$ for every $u\in U'',v\in V''$. Consequently $s(v)\amu u\amu s(v\amu)\amu\in U'$. Since $U'$ is symmetric $s(v\amu) u s(v)\in U'$ for every $u\in U'', v\in V''$. Moreover $s(v) u s(v\amu)\in U'$ for every $u\in U'', v\in V''$, because $V''$ is symmetric. Since $s''(V'')\subseteq s''(V')\subseteq U'$, for every $v\in V''$ there exists $u'\in U'$ such that $s(v\amu)=s(v)\amu u'$. Then for every $u\in U''$, $v\in V''$ there exists $u'\in U'$ such that $$s(v)us(v)\amu=s(v) u s(v\amu) u'\in U'U'\subseteq U.$$ This means that $s(V'')$ is $U$-thin.

Let $g\in G$. By the hypothesis there exist $U'\in\VG$ and $V'\in\VH$ such that $$(g,e_H)W(U',V')(g\amu,e_H)\subseteq W(U,V).$$ Consequently $(g u s(v) g\amu, v)\in W(U,V)$ for every $u\in U',v\in V'$. Then $g u s(v) g\amu s(v)\amu\in U$ for every $u\in U',v\in V'$. Take $u=e_G$; so $[g,s(v)]\in U$ for every $v\in V'$, that is $[g,s(V')]\subseteq U$. Hence $f$ is strongly A$^*$-open.

\medskip
Suppose that $s'$ and $s_y$ (for every $y\in H$) are continuous at $(e_H,e_H)$ and $e_H$ respectively.
We want to prove that the filter base $\mathcal F_{s,(\tau,\sigma\restriction_N)}$ is a base of the neighborhoods of $(e_G,e_H)$ in $G\times N$. Let $W(U,V)\in\mathcal F_{s,(\tau,\sigma\restriction_N)}$.

(1) There exist $U'\in\VG$ and $V'\in\mathcal V_{(N,\sigma\restriction_N)}(e_H)$ such that $U'U'U'\subseteq U$ and $V'V'\subseteq V$. Since $f\restriction_{f^{-1}(N)}:f^{-1}(N)\to N$ is A$^*$-open, there exists $V''\in\mathcal V_{(N,\sigma\restriction_N)}(e_H)$ such that $V''\subseteq V'$ and $s(V'')$ is $U'$-thin; in particular there exists $U''\in\VG$ such that $U''\subseteq U'$ and $x U''x^{-1}\in U'$ for every $x\in s(V'')$. Thanks to the continuity of $s'$ at $(e_H,e_H)$ there exists $V'''\in\mathcal V_{(N,\sigma\restriction_N)}(e_H)$ such that $V'''\subseteq V''$ and $s(v)s(v')\in U'' s(v v')$ for every $v,v'\in V'''$. Then $W(U'',V''')W(U'',V''')\subseteq W(U,V)$: for every $u,u'\in U''$ and $v,v'\in V'''$ there exists $u''\in U''$ such that 
\begin{align*}(u s(v),v)(u' s(v'),v')=(u s(v) u' s(v'),v v')&=(u(s(v) u' s(v)^{-1})s(v) s(v'),v v')=\\
&=(u(s(v) u' s(v)^{-1}) u'' s(v v'),v v')\in W(U,V).
\end{align*}

(2) There exists $U'\in\VG$ such that $U'U'\subseteq U$ and $U'$ is symmetric. As shown before, the continuity of $s'$ at $(e_H,e_H)$ implies the continuity at $e_H$ of the previously defined $s''$. Since $s''$ is continuous at $e_H$, there exists $V'\in\mathcal V_{(N,\sigma\restriction_N)}(e_H)$ such that $V'\subseteq V$ and $s(v)s(v^{-1})\in U'$ for every $v\in V'$. Since $f\restriction_{f^{-1}(N)}:f^{-1}(N)\to N$ is A$^*$-open, there exists $V''\in \mathcal V_{(N,\sigma\restriction_N)}(e_H)$ such that $V''$ is symmetric, $V''\subseteq V'$ and $s(V'')$ is $U'$-thin; in particular there exists $U''\in\VG$ such that $U''$ is symmetric and $x U''x^{-1}\subseteq U'$ for every $x\in s(V'')$. Then $W(U'',V'')^{-1}\subseteq W(U,V)$. In fact for every $u\in U''$ and $v\in V''$ there exists $u'\in U'$ such that 
\begin{align*}(u s(v),v)^{-1}= (s(v)^{-1}u^{-1},v^{-1}) &= (s(v)^{-1}u^{-1}s(v^{-1})^{-1}s(v^{-1}),v^{-1})=\\ &=((s(v)^{-1}u^{-1}s(v)) u' s(v^{-1}),v^{-1}) \in W(U,V).
\end{align*}

(3) Let $(x,y)\in G\times N$. There exist $U'\in\VG$ and $V'\in\mathcal V_{(N,\sigma\restriction_N)}(e_H)$ such that $U'$ is symmetric, $U'U'U'U'\subseteq U$ and $y V'y^{-1}\subseteq V$. There exists also $U''\in\VG$ such that $x U''x^{-1}\subseteq U'$ and $s(y)U''s(y)^{-1}\subseteq U'$. Since $f\restriction_{f^{-1}(N)}:f^{-1}(N)\to N$ is A$^*$-open, there exists a symmetric $V''\in\mathcal V_{(N,\sigma\restriction_N)}(e_H)$ such that $V''\subseteq V'$, $[x,s(V'')]\subseteq U'$ and $[s(y)^{-1},s(V'')]\subseteq U''$. By the continuity of $s_y$ at $e_H$ there exists $V'''\in\mathcal V_{(N,\sigma\restriction_N)}(e_H)$ such that $V'''\subseteq V''$ and $s(y)s(v)s(y)^{-1}\in U' s(y v y^{-1})$ for every $v\in V'''$. Then $(x,y)W(U',V'')(x,y)^{-1}\subseteq W(U,V)$: for every $u\in U''$ and $v\in V'''$ there exists $u'\in U'$ such that 
\begin{align*}
(x,y)(u s(v),v)(x^{-1},y^{-1}) &=(x u s(v) x^{-1},y v y^{-1})=\\
&=((x u x^{-1})(x s(v) x^{-1})s(y v y^{-1})^{-1}s(y v y^{-1}),y v y^{-1})\in W(U,V)
\end{align*}
because $y v y^{-1}\in y V' y{-1}\subseteq V$ and
\begin{align*}
&(x u x^{-1})(x s(v) x^{-1})s(y v y^{-1})^{-1} =(x u x^{-1})(x
s(v) x^{-1})s(y)s(v)^{-1}s(y)^{-1} u'=\\ &=(x u x^{-1})(x s(v)
x^{-1} s(v)^{-1})s(y)(s(y)^{-1}s(v)s(y)s(v)^{-1})s(y)^{-1} u'\in U.
\end{align*}
This completes the proof.
\end{proof}

The next corollary is one of the most interesting results of this paper because it gives a sufficient condition for a continuous surjective homomorphism to be semitopological:

\begin{corollary}\label{c-section_corollary}
Let $f:(G,\tau)\to (H,\sigma)$ be a continuous surjective homomorphism. If $f$ is strongly A$^*$-open, then $f$ is semitopological.
\end{corollary}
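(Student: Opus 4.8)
The plan is to obtain this as a direct consequence of Theorem \ref{c*-section} and Lemma \ref{alpha_group-topology}, applied with the $\sigma$-open subgroup $N$ chosen to be the whole group $H$. Since $H$ is trivially a $\sigma$-open subgroup of $H$, we have $f\restriction_{f\amu(H)}=f$, and a section of $f\restriction_{f\amu(H)}$ is simply a section of $f$; this is what lets the two auxiliary results be read in the form we need.

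First I would unwind the hypothesis: because $f$ is strongly A$^*$-open, Definition \ref{A*-open_def} furnishes a section $s$ of $f$ which is a quasihomomorphism and which realizes conditions (a) and (b), i.e. $s$ makes $f$ strongly A$^*$-open. Next I would feed this $s$ into Theorem \ref{c*-section} with $N=H$: the conclusion is that the filter base $\mathcal F_{s,(\tau,\sigma)}$ of Definition \ref{alpha_s} is a base of the neighborhoods of $(e_G,e_H)$ in $G\times H$ of a group topology $\alpha_s(\tau,\sigma)$ on $G\times H$. Finally I would apply Lemma \ref{alpha_group-topology}, again with $N=H$: it gives that $(G\times H,\alpha_s(\tau,\sigma))$ together with $\widetilde f:G\times H\to H$, $\widetilde f(g,h)=f(g)$, is an A-extension of $f$. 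Identifying $G$ with $G\times\{e_H\}$ via $g\mapsto(g,e_H)$, this is a normal topological subgroup of $G\times H$ — normality is automatic since $G\times\{e_H\}=\ker p_2$ as abstract groups and the topology $\alpha_s$ leaves the direct-product group structure untouched — and $\widetilde f$ is an open continuous homomorphism extending $f$. By Definition \ref{semitopdef}, $f$ is semitopological.

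I do not expect any genuine obstacle, since all the substantive work (the verification that $\mathcal F_{s,(\tau,\sigma\restriction_N)}$ satisfies the neighborhood axioms of a group topology) is already carried out in Theorem \ref{c*-section}. The only point that deserves a moment's care is to confirm that Lemma \ref{alpha_group-topology} really yields an \emph{A-extension} in the sense of the relevant definition — that is, that the embedded copy of $G$ is normal, not merely a topological subgroup — which, as noted above, is immediate from the structure of the direct product.
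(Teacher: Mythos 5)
Your proposal is correct and is essentially identical to the paper's own proof: both take the section $s$ witnessing strong A$^*$-openness, invoke Theorem \ref{c*-section} (with $N=H$) to conclude that $\mathcal F_{s,(\tau,\sigma)}$ generates a group topology $\alpha_s(\tau,\sigma)$ on $G\times H$, and then apply Lemma \ref{alpha_group-topology} to obtain an A-extension. Your extra remark on the normality of $G\times\{e_H\}$ is a fine (and correct) clarification of a point the lemma already handles.
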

\begin{proof}
There exists a section $s$ of $f$ that makes $f$ strongly A$^*$-open. By Theorem \ref{c*-section} $\mathcal F_{s,(\tau,\sigma)}$ is a base of the neighborhoods of $(e_G,e_H)$ in $G\times H$ of a group topology $\alpha_s(\tau,\sigma)$. Then $f$ is semitopological by Lemma \ref{alpha_group-topology}.
\end{proof}

The converse implication does not hold in general --- see Example \ref{strAop-nonquasihomo}.

\smallskip
We have also the following corollary of Theorem \ref{c*-section}.

\begin{corollary}\label{corcor}
Let $(G,\tau),(H,\sigma)$ be topological groups, $N$ a $\sigma$-open subgroup of $H$ and $f:(G,\tau)\to(H,\sigma)$ a continuous surjective homomorphism such that $f\restriction_{f^{-1}(N)}:f^{-1}(N)\to N$ is A$^*$-open. If there exists a section $s$ of $f$ which is a homomorphism, then $f:(G,\tau)\to (H,\sigma)$ is semitopological.
\end{corollary}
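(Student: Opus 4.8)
The plan is to reduce the statement to Theorem \ref{c*-section} by showing that the hypotheses force the existence of a section of $f$ (not merely of $f\restriction_{f^{-1}(N)}$) that is a quasihomomorphism and witnesses that $f$ is strongly A$^*$-open; once that is in hand, Corollary \ref{c-section_corollary} (or equivalently Lemma \ref{alpha_group-topology} applied to the topology $\alpha_s(\tau,\sigma)$ produced by Theorem \ref{c*-section}) finishes the argument.

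First I would fix a section $s_0:N\to f^{-1}(N)$ witnessing that $f\restriction_{f^{-1}(N)}$ is A$^*$-open, and a section $r:H\to G$ of $f$ which is a homomorphism (given by hypothesis). Since $N$ is $\sigma$-open in $H$ and $r$ is a homomorphism, $r(N)$ is a subgroup of $f^{-1}(N)$, and $r\restriction_N:N\to f^{-1}(N)$ is itself a section which is a homomorphism. The key point is to manufacture from $s_0$ and $r$ a global section $s:H\to G$ of $f$ that agrees (up to the A$^*$-open data) with $s_0$ near $e_H$ and with the homomorphism $r$ on the coset representatives outside $N$. Concretely: pick a transversal $T\subseteq H$ of $N$ in $H$ with $e_H\in T$, and define $s(tn)=r(t)\,s_0(n)$ for $t\in T$, $n\in N$; this is a well-defined section of $f$ because $f(r(t)s_0(n))=t\cdot n$ and $r(t)s_0(n)=r(t')s_0(n')$ forces $t=t'$, $n=n'$. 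Near $e_H$ (in the $\sigma$-open subgroup $N$) we have $s=s_0$, so the thinness condition (a) and the commutator condition (b) of Definition \ref{A*-open_def} that we must verify for $f$ only ever involve values $s(v)=s_0(v)$ for $v$ in a small neighborhood $V\subseteq N$ of $e_H$; hence conditions (a) and (b) for $s$ follow verbatim from those for $s_0$. Similarly, for the quasihomomorphism requirement, $s'(h_1,h_2)$ and $s_y(h)$ need only be continuous at $(e_H,e_H)$ and $e_H$; restricting the arguments to $N$, one reduces $s'$ to $s_0'$ and $s_y$ to an expression involving $r(y)$, $s_0$ and $s_0$ conjugated by $r(y)$, and the continuity of these at the identity follows from continuity of $s_0'$ and $(s_0)_{r(y)}$ together with the fact that $r$ is a (continuous, being a homomorphism into a topological group? no --- only a homomorphism) algebraic homomorphism.

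The main obstacle is precisely the last point: $r$ is only an abstract group homomorphism, not assumed continuous, so I cannot directly invoke continuity of $r$ in checking that $s_y$ is continuous at $e_H$ for $y\notin N$. The way around this is to observe that for the quasihomomorphism conditions we never need continuity in the $y$-variable, only in the ``small'' variable $h$ (resp. $(h_1,h_2)$), and that for $h$ ranging over a small neighborhood $V\subseteq N$ the relevant maps factor through $s_0$ and through conjugation by the \emph{fixed} element $r(y)$, which is an automorphism of the topological group $G$ (inner, hence a homeomorphism) --- so continuity at $e_H$ is preserved. Thus $(s)_y$ continuous at $e_H$ reduces to: $h\mapsto r(y)s_0(h)r(y)^{-1}s_0(yhy^{-1})^{-1}$ is continuous at $e_H$ on $V\cap (y^{-1}Ny)$, which is a neighborhood of $e_H$; writing $yhy^{-1}$'s representative appropriately, this is handled by continuity of $(s_0)$-type data for $f\restriction_{f^{-1}(N)}$ after conjugating by the homeomorphism $x\mapsto r(y)xr(y)^{-1}$ of $G$. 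Once $s$ is shown to be a quasihomomorphism making $f$ strongly A$^*$-open, Theorem \ref{c*-section} gives that $\mathcal F_{s,(\tau,\sigma\restriction_H)}=\mathcal F_{s,(\tau,\sigma)}$ is a base of neighborhoods of $(e_G,e_H)$ of a group topology $\alpha_s(\tau,\sigma)$ on $G\times H$, and Lemma \ref{alpha_group-topology} (with $N$ replaced by $H$ itself) then exhibits an A-extension of $f$, so $f$ is semitopological.

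A cleaner alternative, which I would actually adopt to avoid fiddling with transversals, is to apply Theorem \ref{c*-section} directly to the $\sigma$-open subgroup $N$: its hypothesis is exactly that $s_0$ makes $f\restriction_{f^{-1}(N)}$ A$^*$-open and a quasihomomorphism. But A$^*$-open is given, and $s_0$ being a quasihomomorphism is \emph{not} automatic --- so one still needs the homomorphism $r$ to upgrade A$^*$-open to strongly A$^*$-open on $f^{-1}(N)$: replace $s_0$ by $s_1:=r\restriction_N$ when checking the quasihomomorphism clauses? That fails because $r\restriction_N$ need not witness A$^*$-openness. Hence the genuine content is the interpolation argument above, blending the A$^*$-open witness $s_0$ near the identity with the homomorphism $r$ globally; I expect the transversal bookkeeping to be the only delicate part, with all continuity verifications reducing, via inner automorphisms of $G$ and the $\sigma$-openness of $N$, to the already-assumed A$^*$-openness of $f\restriction_{f^{-1}(N)}$.
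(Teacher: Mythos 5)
Your reduction to Theorem \ref{c*-section} and Lemma \ref{alpha_group-topology} is the right frame, and you have correctly put your finger on the delicate point: the hypothesis hands you two a priori different sections, a witness $s_0$ of the A$^*$-openness of $f\restriction_{f^{-1}(N)}$ and the homomorphism $r$, while Theorem \ref{c*-section} needs one section of $f\restriction_{f^{-1}(N)}$ that simultaneously satisfies (a)--(b) of Definition \ref{A*-open_def} and is a quasihomomorphism. But the interpolation $s(tn)=r(t)s_0(n)$ does not close this gap. The quasihomomorphism requirements are continuity conditions \emph{at the identity}: they constrain the section only on arbitrarily small neighbourhoods of $e_H$ (plus, for $s_y$, the single fixed value $s(y)$, which enters through conjugation by a fixed element of $G$), and on the $\sigma$-open subgroup $N$ your blended section is literally $s_0$. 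So at the decisive step you invoke ``continuity of $s_0'$ and $(s_0)_{r(y)}$'' --- but that \emph{is} the quasihomomorphism property of $s_0$, which is not among the hypotheses: A$^*$-openness says nothing about $s_0'$ or $(s_0)_y$, and does not even make $s_0$ continuous at $e_H$. The blending with $r$ away from the identity is invisible to every condition in play (indeed $s\restriction_N=s_0$, so applying Theorem \ref{c*-section} to the subgroup $N$ with your $s$ is the same as applying it with $s_0$); and if you instead apply the theorem with $H$ in place of $N$, you face the further problem that condition (b) is only granted for $g\in f^{-1}(N)$, not for all $g\in G$. This is a genuine gap at the central step.

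The argument the paper has in mind is the one-line version you considered and set aside: take the section $s$ of $f$ that is a homomorphism; then $s\restriction_N$ is a section of $f\restriction_{f^{-1}(N)}$ with $s'\equiv e_G$ and $s_y\equiv e_G$, hence trivially a quasihomomorphism, and it is \emph{this same section} that is understood to witness the A$^*$-openness --- this is exactly how the corollary is used in the sentence following it (``$s$ makes $f$ A$^*$-open $\Rightarrow$ $f$ semitopological''). With that reading, Theorem \ref{c*-section} produces the group topology $\alpha_{s\restriction_N}(\tau,\sigma\restriction_N)$ on $G\times N$ and Lemma \ref{alpha_group-topology} shows that $f$ is semitopological. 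Under the strictly literal reading in which the two sections are unrelated, the conclusion does not follow from Theorem \ref{c*-section} by any argument in the paper, and --- as Example \ref{c,non-c*} shows that a homomorphism section need not witness A$^*$-openness --- there is no formal way to merge the two sections. You should either adopt the intended reading or add the hypothesis that the homomorphism section is the A$^*$-open witness; as it stands, your proof does not establish the statement.
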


As a consequence of Proposition \ref{open+s-hom->strA*op} and Corollary \ref{corcor} we have that for a continuous surjective homomorphism of topological groups $f:(G,\tau)\to(H,\sigma)$, 
if $G\cong \ker f\ltimes H$ and $\tau=\tau\restriction_{\ker f}\times \tau\restriction_H$, in particular there exists a section $s$ of $f$ which is a homomorphism and then $f$ open $\Rightarrow$ $s$ makes $f$ A$^*$-open $\Rightarrow$ $f$ semitopological.

\smallskip
Example \ref{c,non-c*} shows that $s$ homomorphism does not imply that $s$ makes $f$ A$^*$-open.

\begin{remark}
Corollary \ref{c-section_corollary} together with Theorem \ref{semitophom} imply Theorem \ref{semitop}. Indeed let $f:(G,\tau)\to(H,\sigma)$ be a continuous isomorphism of topological groups; Theorem \ref{semitop} states that $f$ is semitopological if and only if $f$ is A-open.  Since $f^{-1}$ is a homomorphism, in particular it is a quasihomomorphism. Since for $f$ continuous isomorphism A$^*$-open is the same as A-open as noted before, Corollary \ref{c-section_corollary} and Theorem \ref{semitophom} prove Theorem \ref{semitop}.
\end{remark}

\subsection{Strongly A-open homomorphisms}

In this section we introduce another property stronger than A$^*$-open:

\begin{deff}\label{strongly_A-open_def}
Let $(G,\tau)$ and $(H,\sigma)$ be topological groups. A homomorphism $f:(G,\tau)\to (H,\sigma)$ is \emph{strongly A-open} if for every $U\in\mathcal V_{(G,\tau)}(e_G)$
\begin{itemize}
\item[(a)]there exists $V\in\mathcal V_{(H,\sigma)}(e_H)$ such that $f^{-1}(V)$ is $U$-thin;
\item[(b)]for every $g\in G$ there exists $V_g\in\mathcal V_{(H,\sigma)}(e_H)$ such that $[g,f^{-1}(V_g)]\subseteq U$.
\end{itemize}
\end{deff}

From the definition we have directly that if $f:(G,\tau)\to (H,\sigma)$ is a strongly A-open continuous surjective group homomorphism, then $$[G,\ker f]\subseteq N_\tau.$$
In particular $\ker f$ is $U$-thin for every $U\in\VG$.

Every strongly A-open homomorphism is A$^*$-open and so also A-open by Proposition \ref{A*op->A-op}. Note that Example \ref{c,non-c*} shows that the existence of a section which is a quasihomomorphism does not imply the condition strongly A-open.

\begin{lemma}\label{Z-trivial}
Let $(G,\tau),(H,\sigma)$ be topological groups and $f:(G,\tau)\to(H,\sigma)$ a continuous surjective homomorphism. Suppose that $\tau$ is Hausdorff and $Z(G)=\{e_G\}$. If $f$ is strongly A-open, then $f$ is a (semitopological) isomorphism.
\end{lemma}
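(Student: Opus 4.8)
The plan is to read off from strong A-openness that $\ker f$ is central, conclude it is trivial, and then appeal to Theorem \ref{semitop}.

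First I would invoke the observation recorded immediately after Definition \ref{strongly_A-open_def}: since $f$ is strongly A-open, $[G,\ker f]\subseteq N_\tau$. As $\tau$ is Hausdorff, $N_\tau=\overline{\{e_G\}}^\tau=\{e_G\}$, hence $[G,\ker f]=\{e_G\}$. In particular $[g,b]=e_G$ for every $g\in G$ and every $b\in\ker f$, i.e.\ $gb=bg$, which is exactly the statement $\ker f\subseteq Z(G)$. By hypothesis $Z(G)=\{e_G\}$, so $\ker f=\{e_G\}$ and $f$ is injective.

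Being surjective by assumption, $f$ is then a continuous group isomorphism. Moreover $f$, being strongly A-open, is in particular A-open (strongly A-open $\Rightarrow$ A$^*$-open $\Rightarrow$ A-open, the last step by Proposition \ref{A*op->A-op}); and for a continuous group isomorphism A-open coincides with semitopological by Theorem \ref{semitop} (see Remark \ref{semitop+iso=A-open}). Therefore $f$ is a semitopological isomorphism, which is what we wanted.

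There is essentially no serious obstacle; the only point requiring a little care is that $[G,\ker f]$ denotes the subgroup \emph{generated} by the commutators $[g,b]$, so that $[G,\ker f]=\{e_G\}$ is equivalent to every individual commutator $[g,b]$ being trivial, and it is this latter form that gives centrality of $\ker f$.
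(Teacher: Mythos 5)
Your proof is correct and follows essentially the same route as the paper: the observation $[G,\ker f]\subseteq N_\tau$ after Definition \ref{strongly_A-open_def}, Hausdorffness forcing $N_\tau=\{e_G\}$, hence $\ker f\subseteq Z(G)=\{e_G\}$, and then the isomorphism case of Theorem \ref{semitop} for the parenthetical "semitopological" claim. Your added remark about $[G,\ker f]$ being the generated subgroup is a reasonable (and correct) clarification the paper leaves implicit.
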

\begin{proof}
Since $\tau$ is Hausdorff, the previous observation implies that $\ker f\subseteq Z(G)$. Since $Z(G)=\{e_G\}$, $\ker f$ is trivial and $f$ is an isomorphism.
\end{proof}

The previous lemma yields that in general semitopological cannot imply strongly A-open.

Moreover we construct the following example, which shows that open together with (strongly) A$^*$-open do not imply strongly A-open. Consequently strongly A-open cannot be equivalent to semitopological in general. However we see in \S\ref{stability} that the class of all strongly A-open continuous surjective homomorphisms has good categorical properties. This allows us to find some nice property of the class $\mathcal S_i$.

\begin{example}\label{open,A*-open,non-strongly-A-open}
Let $K$ be a compact Hausdorff simple group (take for example $K=SO(3,\mathbb R)$). Let $G=K\times K$ and $f=p_1:K\times K\to K$. First of all $f$ is open. Moreover $f$ is not strongly A-open: if it was strongly A-open, since $Z(K)$ is trivial, then it would be an isomorphism by Lemma \ref{Z-trivial}, but $\ker f=\{e_K\}\times K$.
The section $s:K\to G$ of $f$ defined by $s(x)=(x,e_K)$ for every $x\in K$ is a homomorphism. Hence $f$ is strongly A$^*$-open by Proposition \ref{open+s-hom->strA*op}.
\end{example}

\begin{remark}\label{iso->all_equivalent}
For a continuous isomorphism of topological groups $f:(G,\tau)\to(H,\sigma)$ it is equivalent to be semitopological, A-open, A$^*$-open, strongly A$^*$-open and strongly A-open.
\end{remark}

\begin{example}\label{non-open,str_A-open}
In view of Remark \ref{iso->all_equivalent} and Corollary \ref{graph} it suffices to consider a non-open continuous isomorphism of topological abelian groups to see that in general (semitopological and) strongly A-open does not imply open.
\end{example}

The next theorem shows that for a continuous surjective homomorphims such that its kernel is contained in every neighborhood of the neutral element the four new conditions introduced are equivalent to semitopological. Theorem \ref{semitop} is a corollary of the following theorem, since every continuous isomorphism trivially satisfies the assumption, as noted previously in Remark \ref{initial}. As a consequence of Corollary \ref{Aop=Aop=semitop}, in Corollary \ref{initial->all_equivalent_corollary} we add another equivalent condition to those of the next theorem.

\begin{theorem}\label{initial->all_equivalent}
Let $(G,\tau),(H,\sigma)$ be topological groups and $f:(G,\tau)\to(H,\sigma)$ a continuous surjective homomorphism such that $\ker f\subseteq N_\tau$. The following are equivalent:
\begin{itemize}
    \item[(a)]$f$ is strongly A-open;
    \item[(b)]$f$ is strongly A$^*$-open;
    \item[(c)]$f$ is A$^*$-open;
    \item[(d)]$f$ is A-open;
    \item[(e)]$f$ is semitopological.
\end{itemize}
\end{theorem}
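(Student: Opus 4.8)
The plan is to arrange (a)--(e) into one cycle of implications, most of whose arrows already appear in the excerpt (and in diagram (\ref{first_diagram})) with no hypothesis on $f$. Concretely I would use the following, all valid in general: (a)$\Rightarrow$(c) (every strongly A-open homomorphism is A$^*$-open --- any section $s$ of $f$ works, since $s(V)\subseteq f\amu(V)$ and $[g,s(V_g)]\subseteq[g,f\amu(V_g)]$, so Lemma \ref{thin}(a) and monotonicity of $[g,-]$ transfer conditions (a) and (b) of Definition \ref{strongly_A-open_def} from $f\amu(V)$ to $s(V)$); (c)$\Rightarrow$(d) (Proposition \ref{A*op->A-op}); (b)$\Rightarrow$(c) (a section witnessing strongly A$^*$-openness a fortiori witnesses A$^*$-openness); (b)$\Rightarrow$(e) (Corollary \ref{c-section_corollary}); and (e)$\Rightarrow$(d) (Theorem \ref{semitophom}). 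Given these, to get the cycle (a)$\Rightarrow$(c)$\Rightarrow$(b)$\Rightarrow$(e)$\Rightarrow$(d)$\Rightarrow$(a) and hence the equivalence of all five conditions, it remains only to establish (c)$\Rightarrow$(b) and (d)$\Rightarrow$(a), which are the two implications that actually use $\ker f\subseteq N_\tau$.

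For (c)$\Rightarrow$(b) I would simply invoke Claim \ref{initial->quasihom}: if $f$ is A$^*$-open, fix a section $s$ witnessing it; since $\ker f\subseteq N_\tau$, that claim guarantees that $s$ is automatically a quasihomomorphism, so the same $s$ shows that $f$ is strongly A$^*$-open. This step is immediate.

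The implication (d)$\Rightarrow$(a) is the heart of the argument, and it is essentially a remark about neighborhood bases. The definitions of A-open and strongly A-open differ only in that $f\amu(f(U))$ is replaced by $U$; under $\ker f\subseteq N_\tau$ these sets need not be equal, but they coincide on a base. By Remark \ref{initial}, $\tau$ is the initial topology of $\tau_q$ on $G/\ker f$, so the sets $U\in\VG$ with $U=q\amu(q(U))$ form a neighborhood base of $e_G$, and for each such $U$ one has $f\amu(f(U))=U\cdot\ker f=U$ (since $q$ is onto and $U$ is a union of cosets of $\ker f$). Moreover both conditions (a) and (b) of Definition \ref{strongly_A-open_def} pass to larger neighborhoods --- (a) by Lemma \ref{thin}(d), and (b) because $[g,f\amu(V_g)]\subseteq U_0\subseteq U$ whenever $U_0\subseteq U$ --- so it suffices to verify strongly A-openness of $f$ only for the $U$'s in this base. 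But for such a $U$, A-openness of $f$ applied to $U$ yields exactly a $V$ with $f\amu(V)$ being $f\amu(f(U))=U$-thin and, for every $g\in G$, a $V_g$ with $[g,f\amu(V_g)]\subseteq f\amu(f(U))=U$ --- precisely the two requirements of strongly A-open. This closes the cycle.

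I do not expect a genuine obstacle; the only point needing care is the reduction to a neighborhood base in (d)$\Rightarrow$(a), i.e. checking that the two defining conditions of strongly A-open are monotone in $U$, which they are. As a sanity check, for a continuous isomorphism $f$ the hypothesis $\ker f\subseteq N_\tau$ holds trivially and A-open coincides with strongly A$^*$-open, so the theorem specializes to Theorem \ref{semitop}, as announced in the text.
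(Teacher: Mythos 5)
Your proposal is correct and follows essentially the same route as the paper: the equivalence of (b) and (c) via Claim \ref{initial->quasihom}, the implication (c)$\Rightarrow$(e) through Corollary \ref{c-section_corollary}, (e)$\Rightarrow$(d) by Theorem \ref{semitophom}, and the key step (d)$\Rightarrow$(a) by observing that under $\ker f\subseteq N_\tau$ the set $f\amu(f(U))=U\cdot\ker f$ is essentially $U$ (the paper absorbs it into $UU\subseteq U_0$ rather than passing to the saturated neighborhood base, but the idea is identical). No gaps.
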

\begin{proof}
(a)$\Rightarrow$(c) is trivial and (c)$\Rightarrow$(d) by Proposition \ref{A*op->A-op}.

\smallskip
(d)$\Rightarrow$(a) Let $U_0\in\mathcal V_{(G,\tau)}(e_G)$. Then there exists $U\in\mathcal V_{(G,\tau)}(e_G)$ such that $U U\subseteq U_0$. Let $g\in G$. Since $f$ is A-open there exist $V,V_g\in\mathcal V_{(H,\sigma)}(e_H)$ such that $f^{-1}(V)$ is $f^{-1}(f(U))$-thin and $[g,f^{-1}(V_g)]\subseteq f^{-1}(f(U))$. But $f^{-1}(f(U))=U\ker f\subseteq UU\subseteq U_0$ and so $f^{-1}(V)$ is $U_0$-thin and $[g,f^{-1}(V_g)]\subseteq U_0$.

\smallskip
(e)$\Rightarrow$(d) by Theorem \ref{semitophom} and (c)$\Rightarrow$(e) by Claim \ref{initial->quasihom} and Corollary \ref{c-section_corollary}.

\smallskip
(b)$\Leftrightarrow$(c) by Claim \ref{initial->quasihom}.
\end{proof}

In the next lemma we consider the case when the topology on the domain in SIN. In this case the first condition in the definitions of strongly (A-open) and A$^*$-open is automatically satisfied.

\begin{lemma}\label{SIN_case}
Suppose that the topological group $(G,\tau)$ is SIN and let $f:(G,\tau)\to(H,\sigma)$ be a continuous surjective group homomorphism. Then:
\begin{itemize}
	\item[(a)]$f$ is A-open if and only if for every $U\in\VG$ and for every $g\in G$ there exists $V_g\in\VH$ such that $[g,f\amu(V_g)]\subseteq f\amu(f(U))$;
	\item[(b)]$f$ is A$^*$-open if and only if there exists a section $s$ of $f$ such that for every $U\in\VG$ and for every $g\in G$ there exists $V_g\in\VH$ such that $[g,s(V_g)]\subseteq U$;
	\item[(c)]$f$ is strongly A-open if and only if for every $U\in\VG$ and for every $g\in G$ there exists $V_g\in\VH$ such that $[g,f\amu(V_g)]\subseteq U$.
\end{itemize}
\end{lemma}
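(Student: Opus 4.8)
The plan is to observe that a SIN group automatically satisfies the "$U$-thin" conditions appearing as part (a) in all three definitions, so the only content left is part (b). I would prove the three equivalences in parallel, each time showing that the part-(a) requirement is vacuous when $(G,\tau)$ is SIN and that part (b) is unchanged.

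First I would recall that if $(G,\tau)$ is SIN, then for every $U\in\VG$ there is an invariant $U'\in\VG$ with $U'\subseteq U$, i.e. $g U' g\amu = U'$ for all $g\in G$. From this, any subset $M\subseteq G$ is $U$-thin: indeed $\bigcap\{x\amu U x : x\in M\}\supseteq \bigcap\{x\amu U' x : x\in M\} = U'\in\VG$. Equivalently, in the language following Lemma \ref{thin}, $G$ itself is thin, so every subset of $G$ is thin and hence $U$-thin for every $U\in\VG$. In particular, for (a): the set $f\amu(V)$ is $f\amu(f(U))$-thin for any choice of $V$ (so one may take $V=H$, or any fixed neighborhood); for (b) with a section $s$: the set $s(V)$ is $U$-thin for any $V$; for (c): $f\amu(V)$ is $U$-thin for any $V$. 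Thus in each of the three definitions condition (a) holds trivially for every $U$, and the definition collapses to condition (b) alone, which is exactly the stated right-hand side in (a), (b), (c) respectively. For the converse direction in each item, if the displayed commutator condition holds, then taking that same $V_g$ together with the automatically-available $V$ (from the SIN property) witnesses A-open (resp. A$^*$-open, strongly A-open).

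Concretely, for part (a): $f$ A-open means, for every $U\in\VG$, (a$'$) some $V$ with $f\amu(V)$ being $f\amu(f(U))$-thin, and (b$'$) for each $g$ some $V_g$ with $[g,f\amu(V_g)]\subseteq f\amu(f(U))$. Under SIN, (a$'$) is automatic, so A-open $\iff$ (b$'$) for all $U$ and all $g$, which is the claimed statement. Part (b): $f$ A$^*$-open means there is a section $s$ with, for every $U$, (a$'$) some $V$ with $s(V)$ being $U$-thin, and (b$'$) for each $g$ some $V_g$ with $[g,s(V_g)]\subseteq U$; under SIN, (a$'$) is automatic, so A$^*$-open $\iff$ $\exists s\,\forall U\,\forall g\,\exists V_g:[g,s(V_g)]\subseteq U$. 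Part (c) is identical to (a) but with $f\amu(f(U))$ replaced by $U$: under SIN, the $U$-thinness of $f\amu(V)$ is automatic, so strongly A-open $\iff$ $\forall U\,\forall g\,\exists V_g:[g,f\amu(V_g)]\subseteq U$.

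I do not anticipate a genuine obstacle here; the only thing to be slightly careful about is the direction of the argument — in the forward direction one discards condition (a), and in the backward direction one must actually invoke the SIN hypothesis to produce a witness for condition (a), using the invariant neighborhood $U'\subseteq U$ and Lemma \ref{thin}(a) (monotonicity of $U$-thinness along $U'\subseteq U$, or simply the computation above). Everything else is an immediate rephrasing of Definitions \ref{A-open_def}, \ref{A*-open_def}, and \ref{strongly_A-open_def}. I would write the three items in one short paragraph each, or even as a single argument, remarking that "$G$ SIN $\Rightarrow$ $G$ thin $\Rightarrow$ every subset is $U$-thin for every $U$" disposes of condition (a) uniformly in all three cases.
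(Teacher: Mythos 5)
Your proposal is correct and is exactly the argument the paper intends: the remark preceding the lemma states that for a SIN domain the thinness condition in each definition is automatically satisfied (since $G$ SIN means $G$ is thin, so every subset is $U$-thin for every $U\in\VG$ by Lemma \ref{thin}(a)), and each definition collapses to its commutator condition. The paper gives no further proof, so your write-up simply makes this observation explicit.
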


\begin{proposition}\label{projection}
Let $(G,\tau)=(L,\rho)\times (H,\sigma')$ ($\tau=\rho\times\sigma'$) be a topological group and $f=p_2:(G,\tau)\to (H,\sigma)$ be the canonical projection, where $\sigma$ is a group topology on $H$ such that $\sigma\leq\sigma'$. Then the following are equivalent:
\begin{itemize}
	\item[(a)]$f$ is A-open;
	\item[(b)]$f$ is A$^*$-open;
	\item[(c)]$f$ is strongly A$^*$-open.
\end{itemize}
Moreover we have the following characterizations:
\begin{itemize}
	\item[(i)] $f$ is A-open if and only if $1_H:(H,\sigma')\to(H,\sigma)$ is semitopological;
	\item[(ii)] $f$ is strongly A-open if and only if $L'\subseteq N_\rho$ and $1_H:(H,\sigma')\to(H,\sigma)$ is semitopological.
\end{itemize}
If $L$ is abelian also the following condition is equivalent to (a)--(c):
\begin{itemize}
	\item[(d)]$f$ is strongly A-open.
\end{itemize}
\end{proposition}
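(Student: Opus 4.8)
The plan is to reduce everything to the explicit structure of the direct product. Note that $\ker f=L\times\{e_H\}$ and that $s\colon H\to G$, $s(h)=(e_L,h)$, is a section of $f$ which is moreover a \emph{homomorphism}, hence a quasihomomorphism. Since $\tau=\rho\times\sigma'$ has a base of neighbourhoods of $(e_L,e_H)$ consisting of boxes $W\times V_0$ with $W\in\mathcal V_{(L,\rho)}(e_L)$ and $V_0\in\mathcal V_{(H,\sigma')}(e_H)$, and since the $U$-thin and commutator conditions are monotone in $U$ (Lemma~\ref{thin}(d) and its obvious analogue), it suffices in every case to verify the defining conditions for $U$ of this form. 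For such a box, $f(U)=V_0$, $f^{-1}(f(U))=L\times V_0$, and $f^{-1}(V)=L\times V$ for $V\in\mathcal V_{(H,\sigma)}(e_H)$; computing in a product group one gets
$$\bigcap_{x\in L\times V}x^{-1}(L\times V_0)x=L\times\bigcap_{v\in V}v^{-1}V_0v,\qquad \bigcap_{x\in L\times V}x^{-1}(W\times V_0)x=\Bigl(\bigcap_{\ell\in L}\ell^{-1}W\ell\Bigr)\times\bigcap_{v\in V}v^{-1}V_0v,$$
and, for $g=(\ell_0,h_0)$,
$$[g,f^{-1}(V_g)]=[\ell_0,L]\times[h_0,V_g],\qquad [g,s(V_g)]=\{e_L\}\times[h_0,V_g].$$
Since $A\times B$ is a $\tau$-neighbourhood of $(e_L,e_H)$ if and only if $A$ is a $\rho$-neighbourhood of $e_L$ and $B$ a $\sigma'$-neighbourhood of $e_H$, these formulas translate the defining conditions into conditions on $L$ alone and on $1_H\colon(H,\sigma')\to(H,\sigma)$ alone.

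Carrying this out: condition (a) of A-openness for the box $W\times V_0$ becomes ``$V$ is $V_0$-thin in $(H,\sigma')$'' (the $L$-factor $\bigcap_\ell\ell^{-1}L\ell=L$ is always a neighbourhood), and condition (b) becomes ``$[h_0,V_g]\subseteq V_0$''. Comparing with the equivalent form of Theorem~\ref{semitop} applied to $\sigma'\ge\sigma$ on $H$, this says exactly that $1_H\colon(H,\sigma')\to(H,\sigma)$ is semitopological, proving (i). For (a)$\Rightarrow$(c): if $f$ is A-open we obtain conditions (a) and (b) of Theorem~\ref{semitop} for $1_H$, and the two displayed formulas for $s$ then show that the homomorphism section $s$ makes $f$ A$^*$-open; being a homomorphism, $s$ is a quasihomomorphism, so $f$ is strongly A$^*$-open. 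The rest of the equivalence (a)$\Leftrightarrow$(b)$\Leftrightarrow$(c) is immediate: (c)$\Rightarrow$(b) is trivial and (b)$\Rightarrow$(a) is Proposition~\ref{A*op->A-op}.

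For (ii), strong A-openness for the box $W\times V_0$ splits, via the two intersection formulas, into the four requirements: $\bigcap_{\ell\in L}\ell^{-1}W\ell$ is a $\rho$-neighbourhood of $e_L$ (i.e.\ $L$ is $W$-thin in $(L,\rho)$, so over all $W$: $(L,\rho)$ is SIN); $V$ is $V_0$-thin in $(H,\sigma')$ for suitable $V$; $[\ell_0,L]\subseteq W$ (so over all $W$ and all $\ell_0$, since $N_\rho$ is a subgroup: $L'\subseteq N_\rho$); and $[h_0,V_g]\subseteq V_0$. The second and fourth are conditions (a) and (b) of Theorem~\ref{semitop} for $1_H$, i.e.\ $1_H$ semitopological. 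The first and third together say ``$(L,\rho)$ is SIN and $L'\subseteq N_\rho$'', but $L'\subseteq N_\rho$ already forces $(L,\rho)$ to be SIN: $N_\rho=\overline{\{e_L\}}^\rho$ is contained in every neighbourhood of $e_L$, so each open $W_0\ni e_L$ satisfies $W_0N_\rho=W_0$, that is $W_0=q^{-1}(q(W_0))$ with $q\colon L\to L/N_\rho$ the open continuous canonical projection; as $L/N_\rho$ is abelian (because $L'\subseteq N_\rho$), $q(W_0)$ is conjugation-invariant, hence so is $W_0$, so every open neighbourhood of $e_L$ is invariant. Therefore $f$ is strongly A-open if and only if $L'\subseteq N_\rho$ and $1_H$ is semitopological, which is (ii). Finally, if $L$ is abelian then $L'=\{e_L\}\subseteq N_\rho$ trivially, so by (ii) and (i), condition (d) is equivalent to ``$1_H$ semitopological'', hence to (a).

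The main technical point I expect is the bookkeeping of $[g,f^{-1}(V_g)]=[\ell_0,L]\times[h_0,V_g]$ inside the product — one must check the generated subgroup really equals this product, which follows because setting $v=e_H$ (resp.\ $\ell=e_L$) already produces each factor — together with the observation that $L'\subseteq N_\rho$ implies $(L,\rho)$ is SIN; everything else is a routine translation through the product topology and an appeal to the equivalent form of Theorem~\ref{semitop}.
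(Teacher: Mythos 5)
Your proof is correct and follows essentially the same route as the paper's: reduce to box neighbourhoods $W\times V_0$, compute the thinness and commutator conditions factor-wise, use the homomorphism section $s(h)=(e_L,h)$ to get (b)$\Leftrightarrow$(c) and (a)$\Rightarrow$(b), and translate the $H$-factor conditions into the semitopologicity of $1_H:(H,\sigma')\to(H,\sigma)$ via Theorem \ref{semitop}. You additionally supply the justification that $L'\subseteq N_\rho$ forces $(L,\rho)$ to be SIN, which the paper only asserts.
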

\begin{proof}
(b)$\Rightarrow$(a) by Proposition \ref{A*op->A-op} and (b)$\Leftrightarrow$(c) because $s:H\to G$ such that $s(h)=(e_L,h)$ for all $h\in H$ is a section of $f$ which is a homomorphism.

\smallskip
(i) The homomorphism $f$ is A-open if and only if for every $U=U'\times W\in\VG$, where $U'\in\mathcal V_{(L,\rho)}(e_L)$ and $W\in\mathcal V_{(H,\sigma')}(e_H)$, there exists $V\in\VH$ such that $f\amu(V)=L\times V$ is $f\amu(f(U))$-thin, i.e. $L\times W$-thin, and for every $g=(l,h)\in G$ there exists $V_g\in\VH$ such that $[g,f\amu(V_g)]=[g,L\times V_g]\subseteq f\amu(f(U))=L\times W$. This is equivalent to say that $V$ is $W$-thin and $[h,V_g]\subseteq W$. So $f$ is A-open if and only if $1_H:(H,\sigma')\to(H,\sigma)$ is semitopological.

\smallskip
(ii) The homomorphism $f$ is strongly A-open if and only if for every $U=U'\times W\in\VG$, where $U'\in\mathcal V_{(L,\rho)}(e_L)$ and $W\in\mathcal V_{(H,\sigma')}(e_H)$, there exists $V\in\VH$ such that $f\amu(V)=L\times V$ is $U$-thin and for every $g=(l,h)\in G$ there exists $V_g\in\VH$ such that $[g,f\amu(V_g)]=[g,L\times V_g]\subseteq U=L\times W$. This is equivalent to say that $L$ is $U'$-thin, $V$ is $W$-thin and $[l,L]\subseteq U'$ and $[h,V_g]\subseteq W$. Since this happens for every $U'\in \mathcal V_{(L,\rho)}(e_L)$ and $l\in L$ we can conclude that $f$ is strongly A-open if and only if $(L,\rho)$ is SIN, for every $W\in\mathcal V_{(H,\sigma')}(e_H)$ there exists $V\in\VH$ such that $V$ is $W$-thin and $L'\subseteq N_\rho$, for every $W\in\mathcal V_{(H,\sigma')}(e_H)$ and for every $h\in H$ there exists $V_h\in\VH$ such that $[h,V_h]\subseteq W$. Since $L'\subseteq N_\rho$ yields that $(L,\rho)$ is SIN, $f$ is strongly A-open if and only if for every $W\in\mathcal V_{(H,\sigma')}$ there exists $V\in\VH$ such that $V$ is $W$-thin and for every $h\in H$ there exists $V_h\in\VH$ such that $[h,V_h]\subseteq W$, i.e. $L'\subseteq N_\tau$ and $1_H:(H,\sigma')\to(H,\sigma)$ is semitopological.

\smallskip
(a)$\Rightarrow$(b) Suppose that $f$ is A-open. We prove that the section $s:H\to G$ of $f$, defined by $s(h)=(e_L,h)$ for every $h\in H$, makes $f$ A$^*$-open. Let $U=U'\times W\in\VG$, where $U'\in\mathcal V_{(L,\rho)}(e_L)$ and $W\in\mathcal V_{(H,\sigma')}(e_H)$. By (i) there exists $V\in\VH$ such that $V$ is $W$-thin and so $s(V)=\{e_L\}\times V$ is $U$-thin. Let $g=(l,h)\in G$. By (i) there exists $V_h\in\VH$ such that $[h,V_h]\subseteq W$. Hence $[g,s(V_g)]=[(l,h),\{e_l\}\times V_h]\subseteq U$. This proves that $f$ is A$^*$-open.
\end{proof}

In Proposition \ref{compositions..} we see that item (i) holds without any restriction on the continuous surjective group homomorphism $f$.

\begin{proposition}\label{H-reflection}
Let $f:(G,\tau)\to(H,\sigma)$ be a continuous surjective homomorphism of topological groups. Let $q:(H,\sigma)\to(H/N_\sigma,\sigma_q)$ be the canonical projection and $f^*=q\circ f$ the \emph{Hausdorff reflection} of $f$.
\begin{itemize}
	\item[(a)]If $f$ is A-open then $f^*$ is A-open.
	\item[(b)]If $f$ is A$^*$-open then $f^*$ is A$^*$-open.
	\item[(c)]If $f$ is strongly A-open then $f^*$ is strongly A-open.		
\end{itemize}
\end{proposition}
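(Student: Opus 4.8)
The plan is to route everything through the Hausdorff reflection $q\colon(H,\sigma)\to(H/N_\sigma,\sigma_q)$, using two elementary facts. First, $q$ is open (canonical projections of topological groups are open), so $q(V)\in\mathcal V_{(H/N_\sigma,\sigma_q)}(e_{H/N_\sigma})$ for every $V\in\mathcal V_{(H,\sigma)}(e_H)$ and these sets form a base at the identity. Second, in a topological group $N_\sigma=\overline{\{e_H\}}^\sigma=\bigcap\{V:V\in\mathcal V_{(H,\sigma)}(e_H)\}$, so $N_\sigma\subseteq V$ for every such $V$; hence $q^{-1}(q(V))=VN_\sigma=V$, and therefore $(f^*)^{-1}(q(V))=f^{-1}(q^{-1}(q(V)))=f^{-1}(V)$ for every $V\in\mathcal V_{(H,\sigma)}(e_H)$. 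This identity, which matches the neighbourhood data of $f^*$ with that of $f$, is the engine of the proof.

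For (c), given $U\in\mathcal V_{(G,\tau)}(e_G)$ choose $V$ and $V_g$ ($g\in G$) witnessing strong A-openness of $f$, i.e. $f^{-1}(V)$ is $U$-thin and $[g,f^{-1}(V_g)]\subseteq U$; then $W=q(V)$ and $W_g=q(V_g)$ work for $f^*$, since $(f^*)^{-1}(q(V))=f^{-1}(V)$ and $(f^*)^{-1}(q(V_g))=f^{-1}(V_g)$. For (a) there is one more computation, namely $(f^*)^{-1}(f^*(U))=f^{-1}(q^{-1}(q(f(U))))=f^{-1}(f(U)N_\sigma)\supseteq f^{-1}(f(U))$. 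So if $V,V_g$ witness A-openness of $f$, then $f^{-1}(V)=(f^*)^{-1}(q(V))$ is $f^{-1}(f(U))$-thin, hence $(f^*)^{-1}(f^*(U))$-thin by Lemma \ref{thin}(d), and $[g,(f^*)^{-1}(q(V_g))]=[g,f^{-1}(V_g)]\subseteq f^{-1}(f(U))\subseteq(f^*)^{-1}(f^*(U))$; thus $W=q(V)$, $W_g=q(V_g)$ witness A-openness of $f^*$.

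For (b) one first has to build a section of $f^*=q\circ f$. Fix a section $s$ of $f$ witnessing A$^*$-openness and a section $r\colon H/N_\sigma\to H$ of $q$ with $r(e_{H/N_\sigma})=e_H$ (surjective homomorphisms always admit sections), and set $s^*=s\circ r$. Then $s^*$ is injective, $s^*(e_{H/N_\sigma})=e_G$, and $f^*(s^*(\bar h))=q(f(s(r(\bar h))))=q(r(\bar h))=\bar h$, so $s^*$ is a section of $f^*$. The crucial containment is $r(q(V))\subseteq q^{-1}(q(V))=V$, whence $s^*(q(V))=s(r(q(V)))\subseteq s(V)$ for every $V\in\mathcal V_{(H,\sigma)}(e_H)$. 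Now, given $U\in\mathcal V_{(G,\tau)}(e_G)$ and $V,V_g$ witnessing A$^*$-openness of $f$, the set $s^*(q(V))\subseteq s(V)$ is $U$-thin by Lemma \ref{thin}(a) and $[g,s^*(q(V_g))]\subseteq[g,s(V_g)]\subseteq U$ by monotonicity of $[g,-]$; so $W=q(V)$, $W_g=q(V_g)$ together with the section $s^*$ witness that $f^*$ is A$^*$-open. I do not expect a genuine obstacle: the only care needed is in the set-theoretic identities $(f^*)^{-1}(q(V))=f^{-1}(V)$ and $(f^*)^{-1}(f^*(U))=f^{-1}(f(U)N_\sigma)$ coming from $N_\sigma\subseteq V$, and in the construction of $s^*$ with $s^*(q(V))\subseteq s(V)$. (Were one to claim the stronger statement for strongly A$^*$-open, the delicate extra point would be checking that $s^*$ inherits the quasihomomorphism property of $s$, but the proposition as stated does not require it.)
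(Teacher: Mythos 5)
Your argument is the same as the paper's: push the witnesses forward along the open projection $q$, using that $N_\sigma=\bigcap\{V:V\in\mathcal V_{(H,\sigma)}(e_H)\}$, and in (b) build the section $s^*=s\circ r$ exactly as the paper builds $s\circ s_q$. The one slip is the ``engine'' identity $q^{-1}(q(V))=VN_\sigma=V$: from $N_\sigma\subseteq V$ you only get $V\subseteq VN_\sigma\subseteq VV$, and $VN_\sigma=V$ can fail for a neighbourhood that is not open. (Take $H=\mathbb Z\times\mathbb Z$ with the product of the discrete and the indiscrete topologies, so $N_\sigma=\{0\}\times\mathbb Z$, and $V=(\{0\}\times\mathbb Z)\cup\{(1,0)\}$; then $VN_\sigma\supseteq\{1\}\times\mathbb Z\not\subseteq V$.) The repair is routine and does not change your structure: either note that one may take $V$ open, since shrinking $V$ preserves all the required conditions by Lemma \ref{thin}(a), and for open $V$ one does have $VN_\sigma=V$; or do what the paper does, namely choose $V'$ with $V'V'\subseteq V$ and set $W=q(V')$, so that $(f^*)^{-1}(W)=f^{-1}(V'N_\sigma)\subseteq f^{-1}(V)$ and, in (b), $r(W)\subseteq V'N_\sigma\subseteq V$, hence $s^*(W)\subseteq s(V)$. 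All three parts only need these inclusions, not equalities, so with that adjustment your proofs of (a), (b) and (c) coincide with the paper's.
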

\begin{proof}
(a) Let $U\in\VG$. There exists $V\in\VH$ such that $f\amu(V)$ is $f\amu(f(U))$-thin and for $g\in G$ there exists $V_g\in\VH$ such that $[g,f\amu(V_g)]\subseteq f\amu(f(U))$. There exist $V', V_g'\in \VH$ such that $V'V'\subseteq V$ and $V_g'V_g'\subseteq V_g$. Let $W=q(V')$ and $W_g=q(V_g')$. Then $$(f^*)\amu(W)=f\amu(q\amu(q(V')))=f\amu(V' N_\sigma)\subseteq f\amu(V)$$ and analogously $(f^*)\amu(W_g)\subseteq f\amu(V_g)$. Therefore $(f^*)\amu(W)$ is $f\amu(f(U))$-thin and $[g,(f^*)\amu(W_g)]\subseteq f\amu(f(U))$. To conclude note that $f\amu(f(U))\subseteq (f^*)\amu(f^*(U))$.

\smallskip
(b) Let $s$ be a section of $f$ that witnesses that $f$ is A$^*$-open and let $s_q$ be a section of $q$. Then $s^*=s\circ s_q$ is a section of $f^*$. We prove that $s^*$ makes $f^*$ A$^*$-open. Let $U\in\VG$. There exists $V\in\VH$ such that $s(V)$ is $U$-thin and for $g\in G$ there exists $V_g\in\VH$ such that $[g,s(V_g)]\subseteq U$. Since every section of $q$ is continuous at $e_{H/N_\sigma}$, there exist $W, W_g\in \mathcal V_{(H/N_\sigma,\sigma_q)}(e_{H/N_\sigma})$ such that $s_q(W)\subseteq V$ and $s_q(W_g)\subseteq V_g$. Then $s^*(W)$ is $U$-thin and $[g,s^*(W_g)]\subseteq U$. 

\smallskip
(c) Let $U\in\VG$. There exists $V\in\VH$ such that $f\amu(V)$ is $U$-thin and for $g\in G$ there exists $V_g\in\VH$ such that $[g,f\amu(V_g)]\subseteq U$. There exist $V', V_g'\in \VH$ such that $V'V'\subseteq V$ and $V_g'V_g'\subseteq V_g$. Let $W=q(V')$ and $W_g=q(V_g')$. Then $$(f^*)\amu(W)=f\amu(q\amu(q(V')))=f\amu(V' N_\sigma)\subseteq f\amu(V)$$ and analogously $(f^*)\amu(W_g)\subseteq f\amu(V_g)$. Therefore $(f^*)\amu(W)$ is $U$-thin and $[g,(f^*)\amu(W_g)]\subseteq U$.
\end{proof}

\section{Specific cases}\label{particular}

\subsection{Discrete case and indiscrete case}

In this section, for a continuous surjective homomorphism of topological groups $f:(G,\tau)\to(H,\sigma)$, we consider the particular cases when either $\tau=\delta_G$ or $\sigma=\iota_H$ and see when $f$ is semitopological in these situations.

First we consider the case when the topology on the domain is discrete.

\begin{remark}\label{taimanov}
In \cite{Tai} Ta\u\i manov introduced the topology $T_G$ on a group $G$: let $F\in[G]^{<\omega}$ (i.e. $F$ is a finite subset of $G$) and let $$c_G(F)=\bigcap_{x\in F}c_G(x)$$ be the centralizer of $F$ in $G$. Then $\mathcal C=\{c_G(F):F\in[G]^{<\omega}\}$ is a family of subgroups of $G$ closed under finite intersections.
Then $T_G$ is the topology such that $\mathcal C$ is a local base at $e_G$ of $T_G$. A prebase of $\mathcal V_{(G,T_G)}(e_G)$ is $\{c_G(x):x\in G\}$.

The topological group $(G,T_G)$ is Hausdorff if and only if $Z(G)=\{e_G\}$.
In case $G$ is a finitely generated group, $T_G=\delta_G$ whenever $Z(G)$ is trivial, that is when $T_G$ is Hausdorff.
If $G$ is an abelian group then $T_G=\iota_G$. Indeed $N_{T_G}=Z(G)$.
\end{remark}

The next proposition concerns the case when the topology on the domain is discrete.

\begin{proposition}\label{discrhom}
Let $f:(G,\delta_G)\to (H,\sigma)$ be a continuous surjective homomorphism of topological groups. Then:
\begin{itemize}
\item[(a)]$f$ is A-open if and only if $c_H(h)$ is $\sigma$-open for every $h\in H$ (i.e. $\sigma\geq T_H$);
\item[(b)]$f$ is A$^*$-open if and only if there exists a section $s$ of $f$ such that for every $g\in G$ there exists $V_g\in\VH$ such that $s(V_g)\subseteq c_G(g)$ (i.e. $s(\sigma)\geq T_G\restriction_{s(H)}$);
\item[(c)]$f$ is strongly A-open if and only if $\ker f\subseteq Z(G)$ and $f(c_G(g))$ is $\sigma$-open for every $g\in G$ (i.e. $\sigma\geq(T_G)_q$, identifying $G/\ker f$ with $H$).
\end{itemize}
\end{proposition}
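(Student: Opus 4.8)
The plan is to exploit that $\mathcal V_{(G,\delta_G)}(e_G)$ has least element $\{e_G\}$, so that in each of Definitions~\ref{A-open_def}, \ref{A*-open_def} and \ref{strongly_A-open_def} it suffices to verify the two clauses only for $U=\{e_G\}$: for larger $U$ the thinness clause then follows from Lemma~\ref{thin}(d), and the commutator clause is monotone in $U$ (note $f^{-1}(f(\{e_G\}))=\ker f\subseteq f^{-1}(f(U))$ for every $U\ni e_G$). I would first observe that for $U=\{e_G\}$ the thinness clause is automatically satisfied in all three cases: it asks that $\bigcap\{x^{-1}Kx:x\in M\}$ be a $\delta_G$-neighbourhood of $e_G$, where $K=\ker f$ (A-open) or $K=\{e_G\}$ (A$^*$-open and strongly A-open) and $M$ is $f^{-1}(V)$ or $s(V)$; but $e_G\in x^{-1}Kx$ for every $x$, so the intersection contains $e_G$ and hence is a $\delta_G$-neighbourhood. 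Thus everything reduces to rewriting clause (b) at $U=\{e_G\}$.

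For (a), clause (b) of A-open at $U=\{e_G\}$ reads $[g,f^{-1}(V_g)]\subseteq f^{-1}(f(\{e_G\}))=\ker f$, and since $\ker f$ is a subgroup this is equivalent to $[g,x]\in\ker f$, i.e. $[f(g),f(x)]=e_H$, i.e. $f(x)\in c_H(f(g))$, for every $x\in f^{-1}(V_g)$; surjectivity of $f$ gives $f(f^{-1}(V_g))=V_g$, so the clause becomes $V_g\subseteq c_H(f(g))$. Hence $f$ is A-open iff $c_H(f(g))$ is a $\sigma$-neighbourhood of $e_H$, equivalently (being a subgroup) $\sigma$-open, for every $g$; and surjectivity lets me replace ``$f(g)$, $g\in G$'' by ``$h\in H$''. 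The reformulation $\sigma\ge T_H$ is then immediate, since $\sigma$ is a group topology and $\{c_H(h):h\in H\}$ is a prebase of $\mathcal V_{(H,T_H)}(e_H)$ (Remark~\ref{taimanov}).

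For (b) the same computation, with $s(V_g)$ in place of $f^{-1}(V_g)$ and target $\{e_G\}$, turns clause (b) of A$^*$-open into $[g,s(v)]=e_G$ for all $v\in V_g$, i.e. $s(V_g)\subseteq c_G(g)$; as $\{c_G(g):g\in G\}$ is a prebase at $e_G$ of $T_G$, this says exactly that $s$ is continuous at $e_H$ into $(G,T_G)$, i.e. $s(\sigma)\ge T_G\restriction_{s(H)}$. For (c), clause (b) of strongly A-open at $U=\{e_G\}$ reads $f^{-1}(V_g)\subseteq c_G(g)$; since $\ker f\subseteq f^{-1}(V_g)$ this forces $\ker f\subseteq\bigcap_{g}c_G(g)=Z(G)$ — alternatively this is the general fact, noted before Lemma~\ref{Z-trivial}, that a strongly A-open $f$ satisfies $[G,\ker f]\subseteq N_{\delta_G}=\{e_G\}$. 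Granting $\ker f\subseteq Z(G)$, we get $\ker f\subseteq c_G(g)$, hence $f^{-1}(f(c_G(g)))=c_G(g)\ker f=c_G(g)$, so $f^{-1}(V_g)\subseteq c_G(g)$ iff $V_g=f(f^{-1}(V_g))\subseteq f(c_G(g))$. Therefore, given $\ker f\subseteq Z(G)$, clause (b) holds iff $f(c_G(g))$ is a $\sigma$-neighbourhood of $e_H$, i.e. (being a subgroup) $\sigma$-open, for all $g$; and because $\ker f\le Z(G)$ the isomorphism $G/\ker f\cong H$ carries $(T_G)_q$, whose identity neighbourhoods are generated by the $q$-images of the $c_G(g)$, i.e. by the subgroups $f(c_G(g))$, so the condition is $\sigma\ge(T_G)_q$. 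This yields (c).

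I do not expect a genuine obstacle: once the domain is discrete the argument is a mechanical unwinding of the definitions. The only points needing a little care are the verification that the thinness clauses are vacuous under $\delta_G$ (which rests solely on $e_G$ lying in every conjugate of $\ker f$ and of $\{e_G\}$) and, in (c), the identity $f^{-1}(f(c_G(g)))=c_G(g)$ that makes the ``open subgroup'' reformulation genuinely equivalent to clause (b); this last step uses $\ker f\subseteq Z(G)$, which has already been secured.
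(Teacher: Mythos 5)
Your proposal is correct and follows essentially the same route as the paper's proof: reduce everything to the smallest neighbourhood $U=\{e_G\}$, observe that the thinness clauses are vacuous over a discrete domain, and unwind the commutator clause into the stated centralizer conditions (including the identity $f^{-1}(f(c_G(g)))=c_G(g)\ker f=c_G(g)$ once $\ker f\subseteq Z(G)$ is secured in part (c)). Your explicit monotonicity justification for restricting to $U=\{e_G\}$ and the translation into Ta\u\i manov's topology are slightly more detailed than the paper's, but the argument is the same.
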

\begin{proof}
Note that $M$ is $\{e_G\}$-thin for every $M\subseteq G$.

\smallskip
(a) In this case $f$ is A-open if and only if for every $g\in G$ there exists $V_g\in\mathcal V_{(H,\sigma)}(e_H)$ such that $[g,f^{-1}(V_g)]\subseteq\ker f$.

Suppose that $f$ is A-open. Since $\{e_G\}\in\mathcal V_{(G,\delta_G)}(e_G)$, for every $g\in G$ there exists $V\in\mathcal V_{(H,\sigma)}(e_H)$ such that $[g,f^{-1}(V_g)]\subseteq f^{-1}(f(e_G))=\ker f$. Hence for every $h\in H$ there exists $g\in G$ such that $f(g)=h$ and $[h,V_g]=\{e_H\}$. Thus $V_g\subseteq c_H(h)$ and $c_H(h)$ is $\sigma$-open.

Assume that $c_H(h)$ is $\sigma$-open for every $h\in H$. Let $g\in G$. There exists $V_g\in\mathcal V_{(H,\sigma)}(e_H)$ such that $[f(g),V_g]=\{e_H\}$. Consequently $[g,f^{-1}(V_g)]\subseteq f^{-1}([f(g),V_g])\subseteq\ker f$. Then $f$ is A-open.

\smallskip
(b) In this situation $f$ is A$^*$-open if and only if there exists a section $s$ of $f$ such that for every $g\in G$ there exists $V_g\in\VH$ such that $[g,s(V_g)]=\{e_G\}$, that is $s(V_g)\subseteq c_G(g)$.

\smallskip
(c) In this case $f$ is strongly A-open if and only if for every $g\in G$ there exists $V_g\in\mathcal V_{(H,\sigma)}(e_H)$ such that $[g, f\amu(V_g)]=\{e_G\}$, that is $f^{-1}(V_g)\subseteq c_G(g)$. 

Suppose that $f$ is strongly A-open. Let $g\in G$. There exists $V_g\in\mathcal V_{(H,\sigma)}(e_H)$ such that $f^{-1}(V_g)\subseteq c_G(g)$. Then $V_g\subseteq f(c_G(g))$ and so $f(c_G(g))$ is $\sigma$-open. Since $\ker f\subseteq f^{-1}(V_g)\subseteq c_G(g)$ for every $g\in G$, then $\ker f\subseteq Z(G)$.

Assume that $\ker f\subseteq Z(G)$ and $f(c_G(g))$ is $\sigma$-open for every $g\in G$. Let $g\in G$. There exists $V_g\in\mathcal V_{(H,\sigma)}(e_H)$ such that $V_g\subseteq f(c_G(g))$. Then $f^{-1}(V_g)\subseteq f^{-1}(f(c_G(g)))= c_G(g)\ker f$. Since $\ker f\subseteq Z(G)\subseteq c_G(g)$, we can conclude that $f^{-1}(V_g)\subseteq c_G(g)$. Then $f$ is strongly A-open.
\end{proof}

As a corollary of this proposition we obtain the next result, which is \cite[Corollary 5]{Ar}. We write it in terms of the centralizer and the topology of Ta\u\i manov.

\begin{corollary}\label{discriso}
A continuous isomorphism of topological groups $f:(G,\delta_G)\to (H,\sigma)$ is semitopological if and only if $c_H(h)$ is $\sigma$-open for every $h\in H$ (i.e. $\sigma\geq T_H$).
\end{corollary}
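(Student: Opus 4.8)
The plan is to derive this corollary directly from Proposition \ref{discrhom}(a) together with the chain of implications established in the diagram (\ref{first_diagram}), specialized to the case of a continuous isomorphism. First I would recall that for a continuous isomorphism $f:(G,\delta_G)\to(H,\sigma)$ we have $\ker f=\{e_G\}$, so in particular $\ker f\subseteq N_{\delta_G}=\{e_G\}$; hence the hypothesis of Theorem \ref{initial->all_equivalent} is trivially satisfied (as already noted in Remark \ref{initial}). Consequently all five conditions — strongly A-open, strongly A$^*$-open, A$^*$-open, A-open, and semitopological — are equivalent for $f$. In particular $f$ is semitopological if and only if $f$ is A-open.

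Next I would invoke Proposition \ref{discrhom}(a), which says precisely that for a continuous surjective homomorphism $f:(G,\delta_G)\to(H,\sigma)$, being A-open is equivalent to $c_H(h)$ being $\sigma$-open for every $h\in H$, i.e. $\sigma\geq T_H$ in the notation of Remark \ref{taimanov}. Combining this with the equivalence from the previous paragraph yields immediately that $f$ semitopological $\iff$ $f$ A-open $\iff$ $\sigma\geq T_H$, which is the assertion of the corollary. (Here one uses that a continuous isomorphism is in particular a continuous surjective homomorphism, so Proposition \ref{discrhom} applies.)

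There is no real obstacle: the entire content has been pushed into Proposition \ref{discrhom} and Theorem \ref{initial->all_equivalent}, and the corollary is just the conjunction of the two, read off for isomorphisms. The only point requiring a moment's care is checking that the hypothesis $\ker f\subseteq N_\tau$ of Theorem \ref{initial->all_equivalent} is met — but for an isomorphism $\ker f=\{e_G\}\subseteq N_\tau$ holds in any topology, and for the discrete topology $N_{\delta_G}=\{e_G\}$ as well, so this is automatic. Alternatively, and slightly more self-containedly, one can bypass Theorem \ref{initial->all_equivalent} and argue directly: semitopological $\Rightarrow$ A-open is Theorem \ref{semitophom}, while for the converse one notes that $f^{-1}$ is a homomorphism, hence a quasihomomorphism, so A-open $=$ A$^*$-open $=$ strongly A$^*$-open for $f$ (as remarked for continuous isomorphisms), and then strongly A$^*$-open $\Rightarrow$ semitopological by Corollary \ref{c-section_corollary}; the characterization of A-open via $\sigma\geq T_H$ is then Proposition \ref{discrhom}(a).
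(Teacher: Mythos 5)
Your proposal is correct and follows essentially the same route as the paper: the corollary is obtained by combining Proposition \ref{discrhom}(a) with the fact that semitopological and A-open coincide for continuous isomorphisms (which, as you note, follows either from Theorem \ref{initial->all_equivalent} since $\ker f=\{e_G\}\subseteq N_\tau$ automatically, or directly from Theorem \ref{semitophom} together with Corollary \ref{c-section_corollary}). Nothing is missing.
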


The following are other consequences of Proposition \ref{discrhom}.

\begin{remark}
Let $f:(G,\delta_G)\to (H,\sigma)$ be a continuous surjective homomorphism of topological groups. 
\begin{itemize}
	\item[(a)]If $Z(H)$ is $\sigma$-open, then $f$ is A-open.
	\item[(b)]If $f$ is A$^*$-open, then there exists a section $s$ of $f$ such that $s(N_\sigma)\subseteq Z(G)$.
	\item[(c)]If $f(Z(G))$ is $\sigma$-open and $\ker f\subseteq Z(G)$, then $f$ is strongly A-open.
\end{itemize}
\end{remark}

\begin{corollary}\label{diab}
Let $G,H$ be groups and $f:(G,\delta_G)\to (H,\iota_H)$ a continuous surjective homomorphism. Then the following are equivalent:
\begin{itemize}
\item[(a)]$f:(G,\delta_G)\to(H,\iota_H)$ is A-open;
\item[(b)]$f:(G,\delta_G)\to(H,\iota_H)$ is semitopological;
\item[(c)]$H$ is abelian.
\end{itemize}
\end{corollary}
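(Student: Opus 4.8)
The plan is to close the loop (b)$\Rightarrow$(a)$\Rightarrow$(c)$\Rightarrow$(b), using results already established. The implication (b)$\Rightarrow$(a) is immediate from Theorem \ref{semitophom}, which states that every semitopological continuous surjective homomorphism is A-open; this holds with no restriction on the topologies, so it applies here with $\tau=\delta_G$ and $\sigma=\iota_H$.

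For (a)$\Rightarrow$(c) I would specialize Proposition \ref{discrhom}(a), which says that $f:(G,\delta_G)\to(H,\sigma)$ is A-open if and only if $c_H(h)$ is $\sigma$-open for every $h\in H$. The key observation is that when $\sigma=\iota_H$ the only open subset containing $e_H$ is $H$ itself, so "$c_H(h)$ is $\iota_H$-open" forces $c_H(h)=H$, i.e. $h\in Z(H)$. Since this must hold for every $h\in H$, we get $Z(H)=H$, that is, $H$ is abelian. (Equivalently, $\iota_H\geq T_H$ amounts to $T_H=\iota_H$, which by Remark \ref{taimanov} holds for abelian $H$ and, since $N_{T_H}=Z(H)$, forces $H$ abelian.)

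Finally (c)$\Rightarrow$(b) is exactly Corollary \ref{graph}: if $H$ is abelian then every continuous surjective homomorphism onto $H$ is semitopological, in particular $f:(G,\delta_G)\to(H,\iota_H)$.

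I do not anticipate any real obstacle here: the statement is a direct corollary of Proposition \ref{discrhom}, Theorem \ref{semitophom} and Corollary \ref{graph}. The only point requiring a word of care is the reduction of "$c_H(h)$ is $\iota_H$-open" to "$h\in Z(H)$", which is what makes the indiscrete codomain collapse A-openness to commutativity of $H$.
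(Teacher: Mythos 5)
Your proof is correct and follows exactly the paper's own route: (b)$\Rightarrow$(a) by Theorem \ref{semitophom}, (a)$\Rightarrow$(c) by Proposition \ref{discrhom}(a) together with the observation that the only $\iota_H$-open set containing $e_H$ is $H$ itself, and (c)$\Rightarrow$(b) by Corollary \ref{graph}. Nothing further is needed.
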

\begin{proof}
(a)$\Rightarrow$(c) by Proposition \ref{discrhom}(a), (c)$\Rightarrow$(b) by Corollary \ref{graph} and (b)$\Rightarrow$(a) follows from Theorem \ref{semitophom}.
\end{proof}

This corollary yields the following example, which shows that if $f:(G,\tau)\to (H,\sigma)$ is a continuous surjective homomorphism of topological groups such that $G\cong\ker f\times H$ (in particular there exists a section $s$ of $f$ which is a quasihomomorphism), $f$ need not be A-open.

\begin{example}\label{c,non-c*}
Let $H$ be a non-abelian group and $G=H\times H$. Then the canonical projection $f=p_2:(G,\delta_G)\to (H,\iota_H)$ is a continuous surjective homomorphism which admits a section $s$ that is a homomorphism. Since $H$ is non-abelian $f$ is not A-open by Corollary \ref{diab}. In particular $f$ is not A$^*$-open.
\end{example}

This example shows also that a section which is a quasihomomorphism is not always a section which makes $f$ strongly A$^*$-open.

\begin{proposition}\label{allsemitop=ab}
For a group $H$ the following are equivalent:
\begin{itemize}
\item[(a)]$H$ is abelian;
\item[(b)]every continuous surjective homomorphism $f:(G,\tau)\to (H,\sigma)$ is semitopological, for every group $G$ and for every group topologies $\tau,\sigma$;
\item[(c)]every continuous isomorphism $f:(G,\tau)\to (H,\sigma)$ is semitopological, for every group $G$ and for every group topologies $\tau,\sigma$;
\item[(d)]$1_H:(H,\delta_H)\to (H,\iota_H)$ is semitopological.
\end{itemize}
\end{proposition}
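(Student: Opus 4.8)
The plan is to establish the equivalences through the cyclic chain (a)$\Rightarrow$(b)$\Rightarrow$(c)$\Rightarrow$(d)$\Rightarrow$(a), so that each arrow is either immediate or a direct appeal to a result already proved in the paper.

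First I would prove (a)$\Rightarrow$(b). This is exactly Corollary \ref{graph}: if $H$ is abelian, then any continuous surjective homomorphism $f:(G,\tau)\to(H,\sigma)$ is semitopological, with no restriction on $G$, $\tau$ or $\sigma$. So this step is a one-line citation. The implication (b)$\Rightarrow$(c) is trivial, since a continuous isomorphism is in particular a continuous surjective homomorphism; (c) is just (b) restricted to the subclass of isomorphisms. Next, (c)$\Rightarrow$(d) follows by specialization: take $G=H$, $\tau=\delta_H$, $\sigma=\iota_H$ and $f=1_H$. Here $1_H:(H,\delta_H)\to(H,\iota_H)$ is a continuous isomorphism (every map out of a discrete group is continuous, and $1_H$ is bijective), so (c) applies and gives that it is semitopological.

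The only step carrying any real weight is (d)$\Rightarrow$(a), and even this reduces to a result already in place. I would apply Corollary \ref{diab} with $G=H$ and $f=1_H$: that corollary asserts, for a continuous surjective homomorphism $(G,\delta_G)\to(H,\iota_H)$, the equivalence of "A-open", "semitopological" and "$H$ abelian". Taking $f=1_H:(H,\delta_H)\to(H,\iota_H)$, the assumption in (d) that $1_H$ is semitopological therefore forces $H$ to be abelian, which closes the cycle.

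In short, I do not expect any genuine obstacle: the proposition is essentially a repackaging of Corollary \ref{graph} (for the "easy" direction) and Corollary \ref{diab} (for the "hard" direction, which was itself obtained from Proposition \ref{discrhom}(a) and Theorem \ref{semitophom}). If forced to name the delicate point, it is verifying that the degenerate instance $1_H:(H,\delta_H)\to(H,\iota_H)$ legitimately falls under the hypotheses of both corollaries — but this is routine, since this map is simultaneously a continuous isomorphism and a continuous surjective homomorphism of the required form $(G,\delta_G)\to(H,\iota_H)$.
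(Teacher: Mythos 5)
Your proposal is correct and follows exactly the paper's own argument: the cycle (a)$\Rightarrow$(b)$\Rightarrow$(c)$\Rightarrow$(d) via Corollary \ref{graph} and trivial specializations, then (d)$\Rightarrow$(a) via Corollary \ref{diab}. No gaps.
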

\begin{proof}
Corollary \ref{graph} yields (a)$\Rightarrow$(b)$\Rightarrow$(c)$\Rightarrow$(d). Moreover (d)$\Rightarrow$(a) by Corollary \ref{diab}.
\end{proof}

Proposition \ref{discrhom} has the following corollary.

\begin{corollary}
If $G$ is a group, $(H,\sigma)$ a connected topological group and $f:(G,\delta_G)\to(H,\sigma)$ a continuous surjective homomorphism, then the following are equivalent:
\begin{itemize}
	\item[(a)] $f$ is semitopological;
	\item[(b)] $f$ is A-open;
	\item[(c)] $H$ is abelian.
\end{itemize}
\end{corollary}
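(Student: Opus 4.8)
The plan is to prove the cycle (c)$\Rightarrow$(a)$\Rightarrow$(b)$\Rightarrow$(c), doing genuine work only in the last implication and citing earlier results for the first two.

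First, (c)$\Rightarrow$(a) is immediate from Corollary \ref{graph}: if $H$ is abelian, then every continuous surjective homomorphism onto $H$ is semitopological, in particular $f:(G,\delta_G)\to(H,\sigma)$. Next, (a)$\Rightarrow$(b) is exactly Theorem \ref{semitophom}, which asserts that a semitopological continuous surjective homomorphism of topological groups is A-open. Neither of these uses connectedness.

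The substantive step is (b)$\Rightarrow$(c). Since the domain carries the discrete topology $\delta_G$, I would invoke Proposition \ref{discrhom}(a): $f$ being A-open is equivalent to $c_H(h)$ being $\sigma$-open for every $h\in H$. Now fix $h\in H$. The centralizer $c_H(h)$ is a subgroup of $(H,\sigma)$, and an open subgroup of a topological group is automatically closed, because its complement is the union of the (open) nontrivial cosets. Hence $c_H(h)$ is a nonempty clopen subset of the connected group $(H,\sigma)$, which forces $c_H(h)=H$. As this holds for every $h\in H$, every element of $H$ is central, i.e.\ $H$ is abelian.

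The only point requiring attention — and the nearest thing to an obstacle, although it is a standard fact about topological groups — is the observation that an open subgroup is automatically closed, which is precisely what allows connectedness to collapse every centralizer to the whole group. Everything else is a direct application of Proposition \ref{discrhom}, Corollary \ref{graph}, and Theorem \ref{semitophom}.
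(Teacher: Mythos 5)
Your proof is correct, and the cycle (c)$\Rightarrow$(a)$\Rightarrow$(b)$\Rightarrow$(c) with the citations of Corollary \ref{graph}, Theorem \ref{semitophom} and Proposition \ref{discrhom}(a) matches the paper's decomposition. The one place where you genuinely diverge is in how (b)$\Rightarrow$(c) handles the absence of a separation axiom. The paper first treats the case where $(H,\sigma)$ is Hausdorff --- using Hausdorffness to conclude that each centralizer $c_H(h)$ is closed --- and then reduces the general case to the Hausdorff one by passing to the Hausdorff reflection $f^*=q\circ f$ and invoking Proposition \ref{H-reflection}(a) to see that $f^*$ is still A-open. You avoid this two-step reduction entirely by observing that $c_H(h)$, being an \emph{open subgroup}, is automatically closed (its complement is a union of open cosets); this standard fact needs no separation axiom, so connectedness immediately forces $c_H(h)=H$ for all $h$ in full generality. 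Your route is therefore slightly shorter and self-contained, dispensing with Proposition \ref{H-reflection}; the paper's route has the minor virtue of illustrating the use of the Hausdorff reflection machinery it develops, but is not logically necessary here.
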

\begin{proof}
(a)$\Rightarrow$(b) by Theorem \ref{semitophom} and (c)$\Rightarrow$(a) by Corollary \ref{graph}.

\smallskip
(b)$\Rightarrow$(c) Suppose that $(H,\sigma)$ is Hausdorff. By Proposition \ref{discrhom}(a) $c_H(h)$ is $\sigma$-open for every $h\in H$. Since $(H,\sigma)$ is Hausdorff, then $c_H(h)$ is also $\sigma$-closed. Being $(H,\sigma)$ connected, $c_H(h)=H$ for every $h\in H$. Hence $H$ is abelian.

Consider the general case and let $q:(H,\sigma)\to(H/N_\sigma,\sigma_q)$ be the canonical projection and $f^*=q\circ f$. Since $f$ is A-open, then $f^*$ is A-open by Proposition \ref{H-reflection}(a). Since $(H/N_\sigma,\sigma_q)$ is connected and Hausdorff, apply the previous case to conclude that $H$ is abelian.
\end{proof}

In \cite{Ar} the case when the topology on the codomain is indiscrete was not considered and this is done in Proposition \ref{indhom} for surjective homomorphisms and in its Corollary \ref{indiso} for isomorphisms.

\begin{proposition}\label{indhom}
Let $f:(G,\tau)\to (H,\iota_H)$ be a continuous surjective homomorphism of topological groups. Then:
\begin{itemize}
\item[(a)]$f$ is A-open if and only if $G'\cdot\ker f\subseteq\overline{\ker f}$;
\item[(b)]$f$ is A$^*$-open if and only if there exists a section $s$ of $f$ such that $s(H)$ is $U$-thin for every $U\in\VG$ and $[G,s(H)]\subseteq N_\tau$;
\item[(c)]$f$ is strongly A-open if and only if $G'\subseteq N_\tau$;
\item[(d)]$f$ is semitopological if and only if there exists a topological group $(\widetilde G,\widetilde \tau)$ of which $(G,\tau)$ is a topological normal subgroup and such that $\widetilde G=NG$ and $N\cap G=\ker f$ for some dense normal subgroup $N$ of $\widetilde G$ (the last two conditions ensure that the quotient topology on $\widetilde G/N$ is indiscrete and so $(\widetilde G/N,\widetilde\tau_q)$ is topologically isomorphic to $(H,\iota_H)$).
\end{itemize}
\end{proposition}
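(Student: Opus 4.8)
The plan is to prove each of the four items by specialising the relevant definition to the present situation $\sigma=\iota_H$, in which the only neighbourhood of $e_H$ is $H$ itself. Thus in Definitions \ref{A-open_def}, \ref{A*-open_def} and \ref{strongly_A-open_def} every neighbourhood $V,V_g\in\mathcal V_{(H,\iota_H)}(e_H)$ is forced to be $H$, so that $f\amu(V)=f\amu(V_g)=G$, $s(V)=s(V_g)=s(H)$, while $f\amu(f(U))=U\ker f$ for $U\in\VG$. I will also use freely the standard facts that for a subgroup $A\leq G$ one has $\overline A^\tau=\bigcap_{U\in\VG}UA$ (in particular $\overline{\ker f}^\tau=\bigcap_{U\in\VG}U\ker f$ and $N_\tau=\bigcap_{U\in\VG}U$) and that $N_\tau$, $\overline{\ker f}$, $G'$ and $[G,s(H)]$ are subgroups. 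Given this, item (b) is immediate: clause (a) of Definition \ref{A*-open_def} literally says that $s(H)$ is $U$-thin for every $U\in\VG$, and clause (b) says $[g,s(H)]\subseteq U$ for every $g\in G$ and every $U\in\VG$, i.e. $[g,s(H)]\subseteq N_\tau$ for every $g\in G$, which (using that $N_\tau$ is a subgroup) amounts to $[G,s(H)]\subseteq N_\tau$.

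For items (a) and (c) the only content beyond a direct rewriting is that the ``thin'' clause (a) of each definition is automatically implied by the commutator clause (b). For (c): clause (b) of Definition \ref{strongly_A-open_def} becomes $[g,G]\subseteq U$ for all $g\in G$, $U\in\VG$, i.e. $G'\subseteq N_\tau$; and if $G'\subseteq N_\tau$, then given $U\in\VG$ choose a symmetric $W\in\VG$ with $WW\subseteq U$ and note that for $x\in G$, $w\in W$ one has $xwx\amu=[x,w]w$ with $[x,w]\in G'\subseteq N_\tau\subseteq W$, so $xWx\amu\subseteq WW\subseteq U$, i.e. $G=f\amu(H)$ is $U$-thin, which is clause (a). For (a): clause (b) of Definition \ref{A-open_def} becomes $[g,G]\subseteq U\ker f$ for all $g\in G$, $U\in\VG$, i.e. $G'\subseteq\overline{\ker f}$, equivalently $G'\cdot\ker f\subseteq\overline{\ker f}$; and if this holds, then given $U$ and symmetric $W$ with $WW\subseteq U$, for $x\in G$, $w\in W$ we have $[x,w]\in G'\subseteq\overline{\ker f}\subseteq W\ker f$, say $[x,w]=w'k$ with $w'\in W$, $k\in\ker f$, whence, using $\ker f\triangleleft G$, $xwx\amu=[x,w]w=w'w(w\amu kw)\in WW\ker f\subseteq U\ker f$, so $G$ is $U\ker f$-thin, which is clause (a). In both cases the converse implications are trivial, so (a) and (c) follow.

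For item (d): if $f$ is semitopological, take an A-extension $(\widetilde G,\widetilde\tau)$ with associated open continuous surjective $\widetilde f:\widetilde G\to H$, and put $N=\ker\widetilde f\triangleleft\widetilde G$. Proposition \ref{firstprop} gives $\widetilde G=G\cdot N$ and $N\cap G=\ker f$; and since $\widetilde f$ is open continuous surjective, the first homomorphism theorem for topological groups identifies $\widetilde G/N$ topologically with $(H,\iota_H)$, hence $\widetilde G/N$ is indiscrete. As the canonical projection $q:\widetilde G\to\widetilde G/N$ is open, $\overline N^{\widetilde\tau}=q\amu(\overline{\{eN\}})=q\amu(\widetilde G/N)=\widetilde G$, so $N$ is dense. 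Conversely, given $(\widetilde G,\widetilde\tau)$ together with a dense normal subgroup $N$ satisfying $\widetilde G=NG$ and $N\cap G=\ker f$, density of $N$ forces $\overline{\{eN\}}=\widetilde G/N$, so the quotient topology on $\widetilde G/N$ is indiscrete; the second isomorphism theorem gives a group isomorphism $\psi:\widetilde G/N=NG/N\to G/(G\cap N)=G/\ker f\to H$ with $\psi(gN)=f(g)$ for $g\in G$, and then $\widetilde f:=\psi\circ q:\widetilde G\to(H,\iota_H)$ is a homomorphism extending $f$, continuous because its target carries $\iota_H$, and open because $q$ is open and $\psi$ is a homeomorphism of indiscrete spaces; hence $(\widetilde G,\widetilde\tau)$ with $\widetilde f$ is an A-extension of $f$, so $f$ is semitopological.

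I expect item (d) to be the delicate part — specifically, invoking Proposition \ref{firstprop} correctly and making the passage between indiscreteness of $\widetilde G/N$ and density of $N$ precise via openness of the quotient map $q$; once $\sigma=\iota_H$ is exploited, everything in (a), (b), (c) reduces to bookkeeping together with the elementary observation that the commutator clauses already force the thinness clauses.
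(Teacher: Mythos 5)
Your proof is correct and follows the same overall strategy as the paper: specialize the definitions to $\sigma=\iota_H$ (so that the only admissible $V$, $V_g$ is $H$), observe that the thinness clause is then forced by the commutator clause, and in (d) take $N=\ker\widetilde f$ in one direction and build $\widetilde f$ from the second isomorphism theorem in the other. The only real divergence is in (a): the paper passes to the quotient $G/\ker f$, reformulates the condition as $(G/\ker f)'\subseteq N_{\tau_q}$, and uses that this makes the quotient SIN together with Lemma \ref{thin}(f) to recover the thinness clause, whereas you stay inside $G$ and verify thinness by the direct computation $xwx\amu=[x,w]w\in WW\ker f$ using $G'\subseteq\overline{\ker f}=\bigcap_U U\ker f$ and normality of $\ker f$; your route is more elementary and avoids the intermediate equivalence $G'\ker f\subseteq\overline{\ker f}\Leftrightarrow(G/\ker f)'\subseteq N_{\tau_q}$. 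In (d) you are in fact slightly more complete than the paper, since you explicitly justify the density of $\ker\widetilde f$ (via openness of the quotient map and indiscreteness of $\widetilde G/\ker\widetilde f$), a point the paper leaves to the reader after citing Proposition \ref{firstprop}.
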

\begin{proof}
Let $q:G\to G/\ker f$ be the canonical projection.

\smallskip
(a) We prove first that $G'\ker f\subseteq \overline{\ker f}$ if and only if $(G/\ker f)'\subseteq N_{\tau_q}$ and then that $f$ is A-open if and only if $(G/\ker f)'\subseteq N_{\tau_q}$.

Since $q(\overline{\ker f})=\overline{\ker f}/\ker f$ is indiscrete, then $q(\overline{\ker f})\subseteq N_{\tau_q}$. But $q(\overline{\ker f})$ is closed, because $\overline{\ker f}\supseteq \ker f=\ker q$, and so $q(\overline{\ker f})=N_{\tau_q}$. Since $q(G'\ker f)=q(G')=(G/\ker f)'$, so $(G/\ker f)'\subseteq N_{\tau_q}=q(\overline{\ker f})$ if and only if $q(G'\ker f)\subseteq q(\overline{\ker f})$. Since both $G'\ker f$ and $\overline{\ker f}$ contain $\ker f$, so $q(G'\ker f)\subseteq q(\overline{\ker f})$ is equivalent to $G'\ker f\subseteq \overline{\ker f}$.

Assume that $f$ is A-open and let $U\in\mathcal V_{(G,\tau)}(e_G)$. Then $G$ is $f^{-1}(f(U))$-thin.
Since $f$ is A-open, $[g,G]\subseteq f^{-1}(f(U))$ for every $g\in G$ and for every $U\in\mathcal V_{(G,\tau)}(e_G)$. Then $[q(g),G/\ker f]\subseteq q(U)$ for every $U\in\mathcal V_{(G,\tau)}(e_G)$. So $(G/\ker f,)'\subseteq N_{\tau_q}$.

Suppose $(G/\ker f)'\subseteq N_{\tau_q}$. Let $U\in\mathcal V_{(G,\tau)}(e_G)$. From the assumption it follows that $(G/\ker f,\tau_q)$ is SIN, so $G/\ker f$ is $q(U)$-thin. By Lemma \ref{thin}(f) $q\amu(G/\ker f)=G$ is $q\amu(q(U))=f^{-1}(f(U))$-thin for every $U\in\mathcal V_{(G,\tau)}(e_G)$.
Let $U\in\mathcal V_{(G,\tau)}(e_G)$ and $g\in G$. Then $q([g,x])=[q(g),q(x)]\in(G/\ker f)'\subseteq q(U)$, since $G/\ker f\subseteq N_{\tau_q}$ and for every $x\in G$. Therefore $[g,x]\in U\ker f=f^{-1}(f(U))$ and this yields $[g,G]\subseteq f^{-1}(f(U))$.

\smallskip
(b) In this case $f$ is A$^*$-open if and only if there exists a section $s$ of $f$ such that for every $U\in\VG$ $s(H)$ is $U$-thin and for each $g\in G$ $[g,s(H)]\subseteq U$. This is equivalent to $[g,s(H)]\subseteq N_\tau$ for every $g\in G$, that is $[G,s(H)]\subseteq N_\tau$.

\smallskip
(c) Suppose that $f$ is strongly A-open. Moreover for every $U\in\mathcal V_{(G,\tau)}(e_G)$ and for every $g\in G$ we have $[g,G]\subseteq U$ and so $G'\subseteq N_\tau$.

Assume that $G'\subseteq N_\tau$. Let $U\in\mathcal V_{(G,\tau)}(e_G)$. Since $G'\subseteq N_\tau$, $G$ is SIN, so $G$ is $U$-thin and $[g,G]\subseteq N_\tau\subseteq U$ for every $g\in G$.

\smallskip
(d) If $f$ is semitopological there exists an A-extension $(\widetilde G,\widetilde \tau)$ with $\widetilde f:(\widetilde G,\widetilde\tau)\to(H,\sigma)$ open continuous surjective homomorphism. Then $N=\ker \widetilde f$ has the required properties by Proposition \ref{firstprop}.

Suppose that there exists a dense normal subgroup $N$ of $(\widetilde G,\widetilde\tau)$ such that $\widetilde G=G N$ and $N\cap G=\ker f$. Let $\widetilde f:\widetilde G\to H$ be defined by $\widetilde f(g n)=f(g)$ for every $g\in G,n\in N$. Note that $\widetilde G/N$ is algebraically isomorphic to $G/\ker f$ and so to $H$. Being $N$ dense in $(\widetilde G,\widetilde \tau)$, $\widetilde\tau_q$ on $\widetilde G/N$ is the indiscrete topology and so $(\widetilde G/N,\iota_{\widetilde G/N})$ is topologically isomorphic to $(H,\iota_H)$. Hence $\widetilde f$ is open and $(\widetilde G,\widetilde\tau)$ with $\widetilde f$ is an A-extension of $f$, which in particular is semitopological.
\end{proof}

As noted in the foregoing proof the condition $G'\subseteq N_\tau$ yields that $(G,\tau)$ is SIN.

\medskip
In the next corollary of Proposition \ref{indhom} we consider the case when the topology on the domain is Hausdorff and the topology on the codomain is indiscrete.

\begin{corollary}\label{T2}
Let $f:(G,\tau)\to(H,\iota_H)$ be a homomorphism of topological groups, where $\tau$ is Hausdorff.
\begin{itemize}
	\item[(a)]Then $f$ is strongly A-open if and only if $G$ is abelian.
	\item[(b)]If $f$ is A$^*$-open, then $H$ is abelian and so $f$ is semitopological.
	\item[(c)]If $\ker f$ is closed in $(G,\tau)$, then the following are equivalent:
\begin{itemize}
	\item[(i)]$f$ is A-open;
	\item[(ii)]$f$ is semitopological;
	\item[(iii)]$H$ is abelian.
\end{itemize}
\end{itemize}
\end{corollary}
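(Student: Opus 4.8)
The plan is to reduce every part of the statement to Proposition \ref{indhom} (the case $\sigma=\iota_H$), exploiting the single observation that when $\tau$ is Hausdorff one has $N_\tau=\overline{\{e_G\}}^\tau=\{e_G\}$, so that every clause of Proposition \ref{indhom} mentioning $N_\tau$ or $\overline{\ker f}$ collapses almost entirely. Note first that $f$ is automatically continuous, since the codomain carries the indiscrete topology, and that, as throughout this section, $f$ is taken surjective, so Proposition \ref{indhom} applies; one could instead unwind the definitions of A-open, A$^*$-open and strongly A-open directly (with $\sigma=\iota_H$ the only neighbourhood of $e_H$ is $H$ itself, and $f^{-1}(H)=G$), but quoting Proposition \ref{indhom} is cleaner.

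For (a) I would simply invoke Proposition \ref{indhom}(c): $f$ is strongly A-open if and only if $G'\subseteq N_\tau$, which under the Hausdorff assumption is exactly $G'=\{e_G\}$, i.e. $G$ abelian. For (c) the two implications (iii)$\Rightarrow$(ii) and (ii)$\Rightarrow$(i) are free, being Corollary \ref{graph} and Theorem \ref{semitophom} respectively, so only (i)$\Rightarrow$(iii) requires an argument: Proposition \ref{indhom}(a) says A-open is equivalent to $G'\cdot\ker f\subseteq\overline{\ker f}$, and since $\ker f$ is assumed closed this becomes $G'\subseteq\ker f$, whence $G/\ker f\cong H$ is abelian. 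Both parts are essentially immediate once the relevant item of Proposition \ref{indhom} is written down.

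The only part with any content is (b), and I expect it to be the (mild) main obstacle. Here Proposition \ref{indhom}(b) supplies, for an A$^*$-open $f$, a section $s$ with $[G,s(H)]\subseteq N_\tau=\{e_G\}$, that is $s(H)\subseteq Z(G)$. The point is then to deduce commutativity of $H$ from this: for $h_1,h_2\in H$ the elements $s(h_1),s(h_2)$ both lie in the abelian group $Z(G)$, hence $s(h_1)s(h_2)=s(h_2)s(h_1)$, and pushing this identity through the homomorphism $f$ gives $h_1h_2=h_2h_1$. Thus $H$ is abelian, and $f$ is semitopological by Corollary \ref{graph}. The mildly nonobvious step is precisely this one — multiplying two values of the section and transporting the relation $s(h_1)s(h_2)=s(h_2)s(h_1)$ down through $f$, rather than attempting to control $s$ more tightly — and even that is a one-line computation.
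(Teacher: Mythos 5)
Your proof is correct and follows exactly the route the paper intends: the corollary is stated as a consequence of Proposition \ref{indhom}, and each part reduces to the corresponding item of that proposition via $N_\tau=\{e_G\}$ (plus $\overline{\ker f}=\ker f$ in (c)), with Corollary \ref{graph} and Theorem \ref{semitophom} supplying the remaining implications. The argument for (b) — that $[G,s(H)]\subseteq\{e_G\}$ forces $s(H)\subseteq Z(G)$ and hence, pushing commutativity of section values through $f$, that $H$ is abelian — is the right (and only) nontrivial step.
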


The next corollary is the counterpart of Corollary \ref{discriso} in case the second topology is indiscrete. It follows immediately from Proposition \ref{indhom}.

\begin{corollary}\label{indiso}
A continuous isomorphism of topological groups $f:(G,\tau)\to (H,\iota_H)$ is semitopological if and only if $G'\subseteq N_{\tau}$.
\end{corollary}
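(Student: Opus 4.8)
The plan is to read off Corollary \ref{indiso} directly from Proposition \ref{indhom}, using that for continuous isomorphisms all the properties in diagram (\ref{first_diagram}) coincide, as recorded in Remark \ref{iso->all_equivalent}. Note first that a continuous isomorphism is in particular a continuous surjective homomorphism onto $(H,\iota_H)$, so Proposition \ref{indhom} applies to $f$.

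Then I would argue as follows. By Proposition \ref{indhom}(c), $f$ is strongly A-open if and only if $G'\subseteq N_\tau$. On the other hand, by Remark \ref{iso->all_equivalent}, for the continuous isomorphism $f$ the properties semitopological and strongly A-open are equivalent (indeed all five properties in (\ref{first_diagram}) coincide). Combining these two equivalences gives that $f$ is semitopological if and only if $G'\subseteq N_\tau$, which is the claim. Alternatively, one may invoke Proposition \ref{indhom}(a): since $\ker f=\{e_G\}$ one has $G'\cdot\ker f=G'$ and $\overline{\ker f}^\tau=\overline{\{e_G\}}^\tau=N_\tau$, so $f$ is A-open if and only if $G'\subseteq N_\tau$, and again semitopological coincides with A-open by Remark \ref{iso->all_equivalent}.

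I do not expect any real obstacle here: all the work has already been absorbed into Proposition \ref{indhom} and Remark \ref{iso->all_equivalent}. The only step deserving a second glance is the sufficiency direction --- that $G'\subseteq N_\tau$ forces $f$ to be semitopological --- but this is precisely the content of the corresponding implication in Proposition \ref{indhom}(c): the hypothesis $G'\subseteq N_\tau$ makes $(G,\tau)$ a SIN group, and the A-extension constructed in that proof witnesses that $f$ is semitopological. So nothing new needs to be verified.
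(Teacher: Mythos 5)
Your proposal is correct and matches the paper's own (one-line) justification: the paper states that Corollary \ref{indiso} ``follows immediately from Proposition \ref{indhom}'', and your argument via Proposition \ref{indhom}(a) or (c) combined with Remark \ref{iso->all_equivalent} (equivalently, Theorem \ref{initial->all_equivalent} with $\ker f=\{e_G\}$) is exactly the intended route, with the details filled in correctly.
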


The following corollary is an obvious consequence of Corollary \ref{indiso} (or of Corollary \ref{T2}).

\begin{corollary}\label{hausdorff}
Let $f:(G,\tau)\to(H,\iota_H)$ be an isomorphism of topological groups. If $\tau$ is Hausdorff, then the following conditions are equivalent:
\begin{itemize}
\item[(a)]$f:(G,\tau)\to(H,\iota_H)$ is semitopological;
\item[(b)]$G$ is abelian;
\item[(c)]$H$ is abelian.
\end{itemize}
\end{corollary}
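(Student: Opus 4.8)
The plan is to specialize Corollary \ref{indiso} to the Hausdorff case; there is really nothing more to do. First I would observe that, since the codomain carries the indiscrete topology, every group isomorphism $G\to H$ is automatically continuous, so $f$ qualifies as a continuous isomorphism and Corollary \ref{indiso} applies: $f:(G,\tau)\to(H,\iota_H)$ is semitopological if and only if $G'\subseteq N_\tau$.

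Next I would use the Hausdorff hypothesis: $N_\tau=\overline{\{e_G\}}^\tau=\{e_G\}$, so the condition $G'\subseteq N_\tau$ collapses to $G'=\{e_G\}$, which is precisely the statement that $G$ is abelian. This gives the equivalence (a)$\Leftrightarrow$(b).

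Finally, (b)$\Leftrightarrow$(c) is immediate because $f$ is a group isomorphism, so $G$ and $H$ are isomorphic as abstract groups and one is abelian exactly when the other is. Alternatively one can read the equivalence (a)$\Leftrightarrow$(c) directly off Corollary \ref{T2}(c): since $f$ is an isomorphism, $\ker f=\{e_G\}$ is closed in the Hausdorff group $(G,\tau)$, so part (c) of that corollary yields at once that $f$ is semitopological if and only if $H$ is abelian.

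I do not anticipate any obstacle: the statement is a routine specialization of results already proved. The only point deserving a moment's care is checking that "isomorphism of topological groups into an indiscrete group" coincides with "continuous isomorphism" — so that Corollaries \ref{indiso} and \ref{T2} are applicable — which holds because continuity into $(H,\iota_H)$ is automatic.
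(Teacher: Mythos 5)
Your proposal is correct and follows essentially the same route as the paper: the paper itself introduces this corollary as ``an obvious consequence of Corollary \ref{indiso} (or of Corollary \ref{T2})'' and proves (a)$\Rightarrow$(b) exactly as you do, via $G'\subseteq N_\tau=\{e_G\}$. The only cosmetic difference is that the paper closes the cycle with Corollary \ref{graph} for (c)$\Rightarrow$(a) rather than invoking the full ``if and only if'' of Corollary \ref{indiso} together with the abstract isomorphism $G\cong H$; both are equally valid.
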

\begin{proof}
(a)$\Rightarrow$(b) By Corollary \ref{indiso} $G'\subseteq N_\tau$. Since $\tau$ is Hausdorff, $G'$ is trivial and so $G$ is abelian.

\smallskip
(b)$\Rightarrow$(c) is obvious and (c)$\Rightarrow$(a) follows from Corollary \ref{graph}.
\end{proof}

The next example shows that in general neither semitopological nor strongly A-open imply the existence of a section which is a quasihomomorphism. Moreover this is an example of a continuous surjective homomorphism of abelian groups which has no section that is a quasihomomorphism.

\begin{example}\label{strAop-nonquasihomo}
Let $G,H$ be abelian groups and $f:(G,\delta_G)\to (H,\iota_H)$ a continuous surjective homomorphism such that $\ker f$ is not a direct summand of $G$; in particular $G\not\cong \ker f\times H$. Then $f$ is strongly A-open by Corollary \ref{hausdorff} and in particular it is semitopological by Corollary \ref{diab}. Let $s$ be a section of $f$. By Remark \ref{quasihom=hom} if $s$ is a quasihomomorphism then $s$ is a homomorphism and $s$ cannot be a homomorphism because $G\not\cong\ker f\times H$.
\end{example}

\subsection{Almost discrete topologies}

To find some examples and counterexamples we consider topologies which are very close to be discrete.

\begin{deff}
A topological group $(G,\tau)$ is \emph{almost discrete} if $N_\tau$ is open in $(G,\tau)$.
\end{deff}

First examples of almost discrete group topologies on a group $G$ are $\delta_G$ ($N_{\delta_G}=\{e_G\}$) and $\iota_G$ ($N_{\iota_G}=G$). Quotients of almost dicrete groups are almost discrete.
Every finite topological group is almost discrete and every almost discrete group is SIN.

\begin{example}
If $G$ is a finitely generated, then $(G,T_G)$ is almost discrete, where $T_G$ is the topology of centralizers introduced by Ta\u\i manov --- see Remark \ref{taimanov}.
\end{example}

\begin{lemma}
Let $(G,\tau),(H,\sigma)$ be almost discrete groups and $f:(G,\tau)\to (H,\sigma)$ a continuous surjective homomorphism. Then:
\begin{itemize}
\item[(a)]$f$ is A-open if and only if $[G,f^{-1}(N_\sigma)]\subseteq N_\tau\ker f$;
\item[(b)]$f$ is A$^*$-open if and only if there exists a section $s$ of $f$ such that $[G,s(N_\sigma)]\subseteq N_\tau$;
\item[(c)]$f$ is strongly A-open if and only if $[G,f^{-1}(N_\sigma)]\subseteq N_\tau$.
\end{itemize}
\end{lemma}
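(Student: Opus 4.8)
The plan is to reduce the whole statement to Lemma~\ref{SIN_case} by exploiting the very special shape of the neighbourhood filters of almost discrete groups. Since every almost discrete group is SIN, Lemma~\ref{SIN_case} applies to $f$ and replaces the three ``A-type'' properties by the purely commutator-theoretic conditions listed there, in which the thinness clause has already been absorbed. The key observation to add is that in an almost discrete group $(G,\tau)$ the subgroup $N_\tau$ is at the same time open and equal to $\bigcap\{U:U\in\VG\}$, hence it is the smallest element of $\VG$; so $\{N_\tau\}$ is a base of $\VG$, and likewise $\{N_\sigma\}$ is a base of $\VH$. Consequently, in each clause of Lemma~\ref{SIN_case} the quantifiers ``for every $U\in\VG$'' and ``there exists $V_g\in\VH$'' collapse: it suffices to test with $U=N_\tau$ and $V_g=N_\sigma$, and that single instance conversely implies the general statement, because $N_\tau\subseteq U$ and $N_\sigma\subseteq V$ for all $U\in\VG$, $V\in\VH$, so $f\amu(N_\sigma)\subseteq f\amu(V)$ and the relevant commutator sets only shrink when passing to the minimal neighbourhoods.

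Carrying this out: for (c), $f$ is strongly A-open iff $[g,f\amu(N_\sigma)]\subseteq N_\tau$ for every $g\in G$; since $N_\tau$ is a subgroup, letting $g$ range over $G$ rewrites this as $[G,f\amu(N_\sigma)]\subseteq N_\tau$. For (b), $f$ is A$^*$-open iff there is a section $s$ with $[g,s(N_\sigma)]\subseteq N_\tau$ for all $g$, i.e. $[G,s(N_\sigma)]\subseteq N_\tau$. For (a), $f$ is A-open iff $[g,f\amu(N_\sigma)]\subseteq f\amu(f(N_\tau))$ for every $g\in G$; here I would record once the identity $f\amu(f(N_\tau))=N_\tau\ker f$ (both sides equal the subgroup $N_\tau\cdot\ker f$: $\supseteq$ is immediate since $e_H\in f(N_\tau)$, and if $f(x)=f(n)$ with $n\in N_\tau$ then $xn\amu\in\ker f$), noting also that $f(N_\tau)\subseteq N_\sigma$ by continuity of $f$, so the expression is a genuine subset of $H$. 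Quantifying over $g$ then turns this into $[G,f\amu(N_\sigma)]\subseteq N_\tau\ker f$, as claimed.

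I do not expect a real obstacle here; the proof is essentially bookkeeping. The only two points that need a line of justification are (i) that $\{N_\tau\}$ is indeed a neighbourhood base of $e_G$ — which is exactly the definition of almost discrete together with $N_\tau=\bigcap\VG$ — and the analogous fact for $H$; and (ii) the legitimacy of the passage between the pointwise statement ``$[g,X]\subseteq Y$ for every $g\in G$'' and the single statement ``$[G,X]\subseteq Y$'', which is valid precisely because in each item the target $Y$ (namely $N_\tau$ or $N_\tau\ker f$) is a subgroup of $G$. Everything else is the identity $f\amu(f(N_\tau))=N_\tau\ker f$ and the monotonicity of $f\amu$ already used above.
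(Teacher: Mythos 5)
Your proof is correct and follows essentially the same route as the paper: both arguments dispose of the thinness clauses automatically (you via Lemma~\ref{SIN_case} and the fact that almost discrete implies SIN, the paper by noting directly that $f^{-1}(N_\sigma)$ is $N_\tau$-thin) and then collapse the quantifiers using that $N_\tau$ and $N_\sigma$ are the minimal neighbourhoods, together with the identity $f^{-1}(f(U))=U\ker f$. The remaining bookkeeping, including the passage from ``$[g,X]\subseteq Y$ for all $g$'' to ``$[G,X]\subseteq Y$'' for $Y$ a subgroup, matches the paper's proof.
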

\begin{proof}
Since $f^{-1}(N_\sigma)$ is $N_\tau$-thin, then $f$ is A-open if and only if for every $U\supseteq N_\tau$ and for every $g\in G$ there exists $V_g\supseteq N_\sigma$ such that $[g,f^{-1}(V_g)]\subseteq U\ker f$; moreover $f$ is A$^*$-open if and only if there exists a section $s$ of $f$ such that $s(e_H)=e_G$ and for every $U\supseteq N_\tau$ and for every $g\in G$ there exists $V_g\supseteq N_\sigma$ such that $[g,s(V_g)]\subseteq U$; finally $f$ is strongly A-open if and only if for every $U\supseteq N_\tau$ and for every $g\in G$ there exists $V_g\supseteq N_\sigma$ such that $[g,f^{-1}(V_g)]\subseteq U$.

\smallskip
(a) Suppose that $f$ is A-open. Then for every $g\in G$ there exists $V_g\supseteq N_\sigma$ such that $[g,f^{-1}(V_g)]\subseteq N_\tau\ker f$. Hence $[g,f^{-1}(N_\sigma)]\subseteq N_\tau\ker f$ for every $g\in G$ and so $[G,f^{-1}(N_\sigma)]\subseteq N_\tau\ker f$.

Assume that $[G,f^{-1}(N_\sigma)]\subseteq N_\tau\ker f$ and let $U\supseteq N_\tau$. For every $g\in G$ we have $$[g,f^{-1}(N_\sigma)]\subseteq [G,f^{-1}(N_\sigma)]\subseteq N_\tau\ker f\subseteq f^{-1}(f(U)).$$ Thus $f$ is A-open.

\smallskip
(b) Suppose that $f$ is A$^*$-open. Then there exists a section $s$ of $f$ such that for every $g\in G$ there exists $V_g\supseteq N_\sigma$ such that $[g,s(V_g)]\subseteq N_\tau$. Hence $[g,s(N_\sigma)]\subseteq N_\tau$ for every $g\in G$ and so $[G,s(N_\sigma)]\subseteq N_\tau$.

Assume that there exists a section $s$ of $f$ such that $[G,s(N_\sigma)]\subseteq N_\tau$ and let $U\supseteq N_\tau$. For every $g\in G$ we have $[g,s(N_\sigma)]\subseteq [G,s(N_\sigma)]\subseteq N_\tau\subseteq U$. Thus $f$ is A$^*$-open.

\smallskip
(c) Suppose that $f$ is strongly A-open. Then for every $g\in G$ there exists $V_g\supseteq N_\sigma$ such that $[g,f^{-1}(V_g)]\subseteq N_\tau$. Hence $[g,f^{-1}(N_\sigma)]\subseteq N_\tau$ for every $g\in G$ and so $[G,f^{-1}(N_\sigma)]\subseteq N_\tau$.

Assume that $[G,f^{-1}(N_\sigma)]\subseteq N_\tau$ and let $U\supseteq N_\tau$. For every $g\in G$ we have $[g,f^{-1}(N_\sigma)]\subseteq [G,f^{-1}(N_\sigma)]\subseteq N_\tau\subseteq U$. Thus $f$ is strongly A-open.
\end{proof}

The next result is a consequence of Propositions \ref{discrhom} and \ref{indhom}.

\begin{corollary}
Let $(G,\tau)$, $(H,\sigma)$ be almost discrete topological groups and $f:(G,\tau)\to(H,\sigma)$ a continuous surjective homomorphism.
\begin{itemize}
\item[(1)]If $\tau=\delta_G$, then:
\begin{itemize}
\item[(a)]$f:(G,\delta_G)\to(H,\sigma)$ is A-open if and only if $N_\sigma\subseteq Z(H)$;
\item[(b)]$f:(G,\delta_G)\to(H,\sigma)$ is A$^*$-open if and only if there exists a section $s$ of $f$ such that $s(N_\sigma)\subseteq Z(G)$;
\item[(c)]$f:(G,\delta_G)\to(H,\sigma)$ is strongly A-open if and only if $\ker f\subseteq Z(G)$ and $f^{-1}(N_\sigma)\subseteq Z(G)$.
\end{itemize}
\item[(2)]If $\sigma=\iota_H$, then:
\begin{itemize}
\item[(a)]$f:(G,\tau)\to(H,\iota_H)$ is A-open if and only if $G'\subseteq N_\tau\ker f$;
\item[(b)]$f:(G,\tau)\to(H,\iota_H)$ is A$^*$-open if and only if there exists a section $s$ of $f$ such that $[G,s(H)]\subseteq N_\tau$;
\item[(c)]$f:(G,\tau)\to(H,\iota_H)$ is strongly A-open if and only if $G'\subseteq N_\tau$.
\end{itemize}
\item[(3)]If $f$ is an isomorphism, then $f:(G,\tau)\to(H,\sigma)$ is semitopological if and only if $[G,f^{-1}(N_\sigma)]\subseteq N_\tau$.
\end{itemize}
\end{corollary}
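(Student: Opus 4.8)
The plan is to read off all of (1), (2) and (3) from Propositions~\ref{discrhom} and~\ref{indhom} and from the lemma immediately preceding the statement, translating the openness- and thinness-type conditions occurring there into the purely algebraic ones listed here by exploiting the shape of an almost discrete group. First I would record the elementary facts used throughout: if $(G,\tau)$ is almost discrete then $N_\tau$ is $\tau$-open, hence it is the least element of $\VG$; the $\tau$-open subgroups of $G$ are exactly the subgroups containing $N_\tau$; $\overline A^\tau=AN_\tau$ for every subgroup $A$ of $G$ (because $G/N_\tau$ is discrete, so the $\tau$-open sets are the unions of $N_\tau$-cosets); and, since $N_\tau$ is normal, $\bigcap_{x\in M}x^{-1}N_\tau x=N_\tau$ for every $M\subseteq G$, so every subset of $G$ is $N_\tau$-thin and therefore $U$-thin for every $U\in\VG$ by Lemma~\ref{thin}(d).

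For (1), where $\tau=\delta_G$ and so $N_\tau=\{e_G\}$, I would start from Proposition~\ref{discrhom}. In (a), $f$ is A-open iff $c_H(h)$ is $\sigma$-open for every $h\in H$; since $H$ is almost discrete this means $N_\sigma\subseteq c_H(h)$ for every $h$, i.e.\ $N_\sigma\subseteq\bigcap_{h\in H}c_H(h)=Z(H)$. In (b), $f$ is A$^*$-open iff there is a section $s$ of $f$ with $s(V_g)\subseteq c_G(g)$ for a suitable $V_g\in\VH$ and every $g$; as $N_\sigma$ is the least element of $\VH$ this is equivalent to $s(N_\sigma)\subseteq c_G(g)$ for every $g$, that is $s(N_\sigma)\subseteq Z(G)$. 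In (c), $f$ is strongly A-open iff $\ker f\subseteq Z(G)$ and $f(c_G(g))$ is $\sigma$-open for every $g$; assuming $\ker f\subseteq Z(G)$, I would use $f^{-1}(f(c_G(g)))=c_G(g)\ker f=c_G(g)$ and surjectivity of $f$ (so $f(f^{-1}(N_\sigma))=N_\sigma$) to rewrite ``$N_\sigma\subseteq f(c_G(g))$ for every $g$'' as ``$f^{-1}(N_\sigma)\subseteq c_G(g)$ for every $g$'', i.e.\ $f^{-1}(N_\sigma)\subseteq Z(G)$.

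For (2), where $\sigma=\iota_H$ and so $N_\sigma=H$, I would use Proposition~\ref{indhom}. In (a), A-open means $G'\ker f\subseteq\overline{\ker f}=\ker f\,N_\tau$; since $\ker f\,N_\tau$ is a subgroup containing $\ker f$, this reduces to $G'\subseteq N_\tau\ker f$. In (b), A$^*$-open means there is a section $s$ with $s(H)$ $U$-thin for every $U\in\VG$ and $[G,s(H)]\subseteq N_\tau$; the first clause is automatic by the thinness remark above, leaving just the existence of a section $s$ with $[G,s(H)]\subseteq N_\tau$. Item (c) is Proposition~\ref{indhom}(c) verbatim: strongly A-open iff $G'\subseteq N_\tau$. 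Finally, (3) follows from part~(c) of the lemma preceding the statement together with Remark~\ref{iso->all_equivalent}, which for a continuous isomorphism identifies semitopological with strongly A-open, so $f$ is semitopological iff $[G,f^{-1}(N_\sigma)]\subseteq N_\tau$.

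There is no serious obstacle here: the argument is a routine dictionary translation. The only points needing a moment of care are the two structural facts $\overline A^\tau=AN_\tau$ and ``every subset is thin'' in an almost discrete group, and, in (1c), checking that the hypothesis $\ker f\subseteq Z(G)$ is precisely what makes $f^{-1}(f(c_G(g)))$ collapse to $c_G(g)$, so that the two reformulations of strong A-openness match up.
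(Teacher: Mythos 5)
Your proposal is correct and follows exactly the route the paper intends: the paper gives no explicit proof, merely noting that the corollary is "a consequence of Propositions \ref{discrhom} and \ref{indhom}", and your dictionary translation (least neighborhood $N_\tau$, open subgroups are those containing $N_\tau$, $\overline{A}^\tau=AN_\tau$, every subset is thin) supplies precisely the missing details, including the correct use of the preceding lemma together with Remark \ref{iso->all_equivalent} for item (3), which Propositions \ref{discrhom} and \ref{indhom} alone would not cover. The one point genuinely needing care, the role of $\ker f\subseteq Z(G)$ in collapsing $f^{-1}(f(c_G(g)))$ to $c_G(g)$ in (1c), is handled correctly.
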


Let $G$ be a non-trivial group. The \emph{upper central series} of $G$ is defined by $Z_0(G)=\{e_G\}$, $Z_1(G)=Z(G)$ and $Z_n(G)$ is such that $Z_n(G)/Z_{n-1}(G)=Z(G/Z_{n-1}(G))$ for every $n\in\mathbb N$, $n\geq 2$. A group $G$ is \emph{nilpotent} if and only if $Z_m(G)=G$ for some $m\in\mathbb N_+$.

\begin{example}\label{almdiscrex}
Let $G$ be a group and $\tau$ an almost discrete group topology on $G$. Consider $$(G,\delta_G)\xrightarrow{1_G}(G,\tau)\xrightarrow{1_G}(G,\iota_G).$$
Then:
\begin{itemize}
\item[(a)]$1_G:(G,\delta_G)\to(G,\tau)$ is semitopological if and only if $N_\tau\subseteq Z(G)$;
\item[(b)]$1_G:(G,\tau)\to(G,\iota_G)$ is semitopological if and only if $G'\subseteq N_\tau$.
\end{itemize}
\end{example}

In particular this example shows that in general the composition of semitopological isomorphisms could be not semitopological: if $G$ is non-abelian and $G'\subseteq N_\tau\subseteq Z(G)$, where $N_\tau\neq\{e_G\}$, then $1_G:(G,\delta_G)\to(G,\tau)$ and $1_G:(G,\tau)\to(G,\iota_G)$ are semitopological, but $1_G:(G,\delta_G)\to(G,\iota_G)$ is not semitopological.

If $G$ is nilpotent of class $2$ and $N_\tau=Z(G)\neq\{e_G\}$, then $G'\subseteq N_\tau=Z(G)$, because $G/Z(G)$ is abelian, and we have exactly \cite[Example 12]{Ar}.

\medskip
As a corollary of the previous example we have the following one, which shows that strongly A$^*$-open does not imply open.

\begin{example}\label{non-open_*-section}
Let $G$ be a non-perfect group, that is $G'\neq G$, and let $\tau$ be the almost discrete group topology on $G$ such that $N_\tau=G'$. Then $1_G:(G,\tau)\to(G,\iota_G)$ is not open and it is semitopological by the previous example. In this case it is equivalent to say that $1_G:(G,\tau)\to(G,\iota_G)$ is strongly A$^*$-open (see Remark \ref{iso->all_equivalent}).
\end{example}

\begin{claim}
Let $(G,\tau),(H,\sigma)$ be topological groups, $f:(G,\tau)\to(H,\sigma)$ a continuous surjective homomorphism and $s$ a section of $f$. Let $q:G\to G/N_\tau$ be the canonical projection. If $\sigma$ is almost discrete, then $s$ is a quasihomomorphism if and only if $q\circ s\restriction_{N_\sigma}:N_\sigma\to G/N_\tau$ is a homomorphism.

In case $\tau$ is Hausdorff, $s$ is a quasihomomorphism if and only if $s\restriction_{N_\sigma}$ is a homomorphism.
\end{claim}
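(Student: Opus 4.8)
The plan is to translate both requirements in the definition of quasihomomorphism into purely algebraic containments, using the hypothesis on $\sigma$. Since $\sigma$ is almost discrete, $N_\sigma$ is $\sigma$-open; as $N_\sigma=\bigcap\{V:V\in\VH\}$, it is then the smallest member of $\VH$. Symmetrically $N_\tau=\bigcap\{U:U\in\VG\}$. Hence a map $\varphi\colon H\to G$ with $\varphi(e_H)=e_G$ is continuous at $e_H$ if and only if $\varphi(N_\sigma)\subseteq N_\tau$, and a map $\psi\colon H\times H\to G$ with $\psi(e_H,e_H)=e_G$ is continuous at $(e_H,e_H)$ if and only if $\psi(N_\sigma\times N_\sigma)\subseteq N_\tau$. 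Applying this to $s'$ and to each $s_y$, the section $s$ is a quasihomomorphism if and only if (i) $s'(v,v')\in N_\tau$ for all $v,v'\in N_\sigma$ and (ii) for every $y\in H$, $s_y(v)\in N_\tau$ for all $v\in N_\sigma$.

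First I would dispose of (i). Because $N_\sigma$ is a subgroup of $H$ and $N_\tau$ is normal in $G$, for $v,v'\in N_\sigma$ the condition $s(v)s(v')s(vv')\amu\in N_\tau$ is equivalent to $q(s(v))\,q(s(v'))=q(s(vv'))$; since also $q(s(e_H))$ is the identity of $G/N_\tau$, clause (i) says exactly that $q\circ s\restriction_{N_\sigma}\colon N_\sigma\to G/N_\tau$ is a homomorphism. This already yields the forward implication of the Claim and reduces its converse to proving that (i) implies (ii).

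For that implication the plan is to express $s_y$ through $s'$. A direct computation gives, for every $y\in H$ and $v\in N_\sigma$ (and $yvy\amu\in N_\sigma$ since $N_\sigma\triangleleft H$), the identity $s_y(v)=s'(y,v)\,s'(yvy\amu,y)\amu$. Writing $t=q\circ s$, clause (ii) becomes $t(y)t(v)t(y)\amu=t(yvy\amu)$ for all such $y,v$, i.e. the coincidence on $N_\sigma$ of the two maps $v\mapsto t(y)t(v)t(y)\amu$ and $v\mapsto t(yvy\amu)$; by (i) each of these is a homomorphism of $N_\sigma$ into $G/N_\tau$ (the first being $t\restriction_{N_\sigma}$ composed with conjugation by $t(y)$, the second being $t\restriction_{N_\sigma}$ precomposed with the automorphism $v\mapsto yvy\amu$ of $N_\sigma$). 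I expect the main obstacle to lie exactly here. The tool I would use is the non-abelian $2$-cocycle identity $s'(a,b)\,s'(ab,d)=s(a)\,s'(b,d)\,s(a)\amu\,s'(a,bd)$, read modulo $N_\tau$: one must instantiate $a,b,d$ so that every value of $s'$ occurring besides $s'(y,v)$ and $s'(yvy\amu,y)$ has both arguments in $N_\sigma$ — hence lies in $N_\tau$ by (i) and vanishes in $G/N_\tau$ — while keeping track of the inner automorphisms by $t(y)$ that the identity produces.

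Finally, the Hausdorff case is the instance $N_\tau=\{e_G\}$: then $q$ is injective, so $q\circ s\restriction_{N_\sigma}$ is a homomorphism if and only if $s\restriction_{N_\sigma}$ is, and the general equivalence specializes to the second assertion.
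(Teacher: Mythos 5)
Your reduction of the two continuity requirements to the algebraic containments (i) $s'(N_\sigma\times N_\sigma)\subseteq N_\tau$ and (ii) $s_y(N_\sigma)\subseteq N_\tau$ for every $y\in H$, and your identification of (i) with ``$q\circ s\restriction_{N_\sigma}$ is a homomorphism'', are correct and are exactly what the paper does; the forward implication is therefore complete. The converse, however, is not: you reduce it to ``(i) implies (ii)'' and offer only a plan based on the cocycle identity $s'(a,b)\,s'(ab,d)=s(a)\,s'(b,d)\,s(a)^{-1}\,s'(a,bd)$, yourself flagging this as the main obstacle. That obstacle cannot be overcome by this identity: any instantiation in which $s'(y,v)$ with $y\notin N_\sigma$ occurs necessarily produces at least one further $s'$-term having $y$ among its arguments, and (i) controls $s'$ only on $N_\sigma\times N_\sigma$. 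Equivalently, in your reformulation the maps $v\mapsto t(y)t(v)t(y)^{-1}$ and $v\mapsto t(yvy^{-1})$ are indeed both homomorphisms on $N_\sigma$, but nothing forces two such homomorphisms to coincide.

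In fact the implication ``(i) $\Rightarrow$ (ii)'' is false, so the gap is not closable. Let $H=S_3$ carry the almost discrete topology $\sigma$ whose open sets are the unions of cosets of $A_3$, so $N_\sigma=A_3$; let $G=S_3\times S_3$ be discrete, so $N_\tau=\{e_G\}$ and $q$ may be identified with $1_G$; let $f=p_1$. Define $s(h)=(h,h)$ for $h\in A_3$ and $s(h)=(h,e)$ for $h\notin A_3$: this is a section of $f$, and $q\circ s\restriction_{N_\sigma}=s\restriction_{A_3}$ is the diagonal homomorphism. Yet for $y=(12)$ and $v=(123)$ one has $yvy^{-1}=(132)$ and $s_y(v)=((12),e)\,((123),(123))\,((12),e)\,((123),(123))=(e,(132))\neq e_G$, so $s_y(A_3)\not\subseteq N_\tau$ and $s$ is not a quasihomomorphism. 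The paper's own argument for this direction is no better: it silently replaces $q(s(yvy^{-1}))$ by $q(s(y))q(s(v))q(s(y))^{-1}$, which is precisely the unproved (and, by this example, unprovable) step. So the difficulty you isolated is genuine, and the converse half of the Claim --- hence also of its Hausdorff specialization, which you correctly obtain by setting $N_\tau=\{e_G\}$ --- fails as stated; only the forward implication survives.
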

\begin{proof}
Suppose that $s$ is a quasihomomorphism. Then $s':(H\times H,\sigma\times\sigma)\to (G,\tau)$, defined by $s'(x,y)= s(x)s(y)s(x y)\amu$ for every $x,y\in H$, is continuous at $(e_H,e_H)$. This means that $s(x)s(y)s(xy)\amu\in \bigcap_{U\in\VG}U=N_\tau$ for every $x,y\in N_\sigma$. Hence $q(s(x)s(y)s(x y)\amu)=e_{G/N_\tau}$ and so $q(s(x))q(s(y))=q(s(xy))$ for every $x,y\in N_\sigma$.

To prove the converse implication assume that $q\circ s\restriction_{N_\sigma}$ is a homomorphism. Let $x,y \in N_\sigma$. Then $$q(s(x y))=q(s(x))q(s(y))=q(s(x)s(y))$$ and consequently $s'(x,y)=s(x)s(y)s(x y)\amu\in N_\tau$ for every $x,y\in H$. Moreover $$q(s(y)s(x)s(y)\amu s(y x y\amu)\amu)=q(s(y))q(s(x))q(s(y))\amu(q(s(y))q(s(x))q(s(y))\amu)\amu=e_G$$ yields $s_y(x)=s(y)s(x)s(y)\amu s(y x y\amu)\amu\in N_\tau$ for every $x,y\in H$. Since $N_\tau\subseteq U$ for every $U\in\VG$, these two properties prove that $s$ is a quasihomomorphism. 
\end{proof}

The next example shows that the property A-open could be strictly weaker than A$^*$-open.

\begin{example}\label{Aop-<A*op}
Let $G$ be a nilpotent group of class $3$ and $f=q:G\to G/Z(G)=H$ the canonical projection. Endow $G$ with $\delta_G$ and $H$ with the almost discrete topology $\sigma$ such that $N_\sigma=Z_2(G)/Z(G)$. Then $f$ is A-open, but not A$^*$-open.

Let $U\in\VG$. Then $f\amu(V)$ is $U$-thin for every $V\in\VH$. For $U'=\{e_G\}$ and $V=N_\sigma$, $$[G,f\amu(V)]=[G,Z_2(G)]\subseteq f\amu(f(U'))=Z(G).$$ Then for every $U\in\VG$ and for every $g\in G$, $[g,f\amu(V)]\subseteq f\amu(f(U))$. This proves that $f$ is A-open.
If there exists a section $s$ of $f$ such that $s(e_H)=e_G$ and $f$ is A$^*$-open, taking $U=\{e_G\}\in\VG$, then $[g,s(N_\sigma)]=\{e_G\}$ for every $g\in G$. This implies that $s(N_\sigma)\subseteq Z(G)$, but it is not possible because $s(N_\sigma)\cap Z(G)=\{e_G\}$ and $s(N_\sigma)\neq\{e_G\}$.
\end{example}

In the following diagram we collect the relations among the main properties we have considered.

\begin{equation}\label{small_diagram}
\xymatrix{
& \text{strongly A$^*$-open} \ar@{=>}[ldd] \ar@{=>}[rdd] \ar[rd]^{no} \ar[ld]_{no} & \\
\text{open} \ar@{=>}[d] \ar[rr]^{no} \ar@{-->}@/^1pc/[ru]^{?}  \ar@/_0.8pc/@{-->}[rrd]^? &  & \text{strongly A-open} \ar@{=>}[d] \ar@/_1pc/[lu]_{no} \ar@/_1pc/[ll]_{no} \ar@/_0.7pc/@{-->}[lld]_? \\
\text{semitopological} \ar@{=>}[dr] \ar@/^0.7pc/[u]^{no} \ar@{-->}@/_0.7pc/[rr]_? \ar[rru]^{no} & & \text{A$^*$-open} \ar@{=>}[dl] \ar@/_0.7pc/[u]_{no} \ar@{-->}[ll]_? \\
& \text{A-open} \ar@{-->}@/^1pc/[lu]^? \ar@/_1pc/[ru]_{no} &
}
\end{equation}

\section{Stability}\label{stability}

\subsection{Stability under pullback}

The next theorem generalizes Theorem \ref{subgroupst}(a). Items (a), (b) and (c) below say equivalently that the classes of all A-open, A$^*$-open and strongly A$^*$-open continuous surjective group homomorphisms are closed under pullback along topological embeddings. Item (d) is stronger and implies that the class of all strongly A-open continuous surjective group homomorphisms is closed under pullback along topological embeddings.

\begin{theorem}\label{subgroups_stability}
Let $(G,\tau),(H,\sigma)$ be topological groups, $N$ a subgroup of $H$, $M$ a subgroup of $G$ and $f:(G,\tau)\to(H,\sigma)$ a continuous surjective homomorphism.
\begin{itemize}
\item[(a)]If $f:(G,\tau)\to(H,\sigma)$ is A-open, then $f\restriction_{f^{-1}(N)}:(f^{-1}(N),\tau\restriction_{f^{-1}(N)})\to(N,\sigma\restriction_N)$ is A-open.
\item[(b)]If $f:(G,\tau)\to(H,\sigma)$ is A$^*$-open, then $f\restriction_{f^{-1}(N)}:(f^{-1}(N),\tau\restriction_{f^{-1}(N)})\to(N,\sigma\restriction_N)$ is A$^*$-open.
\item[(c)]If $f:(G,\tau)\to(H,\sigma)$ is strongly A$^*$-open, then $f\restriction_{f^{-1}(N)}:(f^{-1}(N),\tau\restriction_{f^{-1}(N)})\to(N,\sigma\restriction_N)$ is strongly A$^*$-open.
\item[(d)]If $f:(G,\tau)\to(H,\sigma)$ is strongly A-open, then $f\restriction_M:(M,\tau\restriction_M)\to(f(M),\sigma\restriction_{f(M)})$ is strongly A-open.
\end{itemize}
\end{theorem}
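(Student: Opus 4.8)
My plan is to prove all four items by a single device: take the neighbourhoods (and, in (b) and (c), the section) that witness the property for $f$, and intersect everything in sight with the relevant subgroup. Two elementary group-theoretic facts make this work: if $x$ lies in a subgroup $K$ of $G$ then $xKx^{-1}=K$, and if $g,x\in K$ then $[g,x]\in K$, so $[g,A]\subseteq K$ whenever $g\in K$ and $A\subseteq K$. I will also use the reformulation of thinness recalled in \S\ref{Uthin}: a set $M$ is $W$-thin exactly when some identity neighbourhood $W_1$ satisfies $xW_1x^{-1}\subseteq W$ for all $x\in M$. I also record the elementary set-theoretic bookkeeping: writing $G_1=f^{-1}(N)$ and $f_1=f\restriction_{G_1}$, one has $f_1^{-1}(X)=f^{-1}(X)$ for $X\subseteq N$ (since $f(y)\in N$ forces $y\in G_1$), and, using surjectivity of $f$, $f(U\cap G_1)=f(U)\cap N$, whence $f_1^{-1}\bigl(f_1(U\cap G_1)\bigr)=f^{-1}(f(U))\cap G_1$ for every $U\in\VG$.

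First I would handle (a) and (d) together. For (a): given $U\in\VG$, A-openness of $f$ provides $V\in\VH$ with $f^{-1}(V)$ being $f^{-1}(f(U))$-thin and, for each $g$, some $V_g\in\VH$ with $[g,f^{-1}(V_g)]\subseteq f^{-1}(f(U))$; I claim $V\cap N$ and $V_g\cap N$ work for $f_1$ at $U\cap G_1$. Indeed $f_1^{-1}(V\cap N)=f^{-1}(V)\cap G_1\subseteq f^{-1}(V)$; taking $U_1\in\VG$ with $xU_1x^{-1}\subseteq f^{-1}(f(U))$ for all $x\in f^{-1}(V)$ and intersecting with $G_1$, the identity $xG_1x^{-1}=G_1$ for $x\in G_1$ gives $x(U_1\cap G_1)x^{-1}\subseteq f^{-1}(f(U))\cap G_1=f_1^{-1}(f_1(U\cap G_1))$ for every $x\in f_1^{-1}(V\cap N)$, so $f_1^{-1}(V\cap N)$ is $f_1^{-1}(f_1(U\cap G_1))$-thin; similarly $[g,f_1^{-1}(V_g\cap N)]\subseteq[g,f^{-1}(V_g)]\cap G_1\subseteq f^{-1}(f(U))\cap G_1$, using $g\in G_1$. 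Item (d) is proved by the identical argument with $f^{-1}(f(U))$ replaced throughout by $U$ and $G_1=f^{-1}(N)$ replaced by the arbitrary subgroup $M$: here $f\restriction_M$ surjects onto $f(M)$, one uses $V\cap f(M)$ and $V_g\cap f(M)$, and the two subgroup facts are applied to $M$.

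Next, for (b) and (c) I would restrict the section. If $s$ witnesses A$^*$-openness of $f$, then $f(s(n))=n\in N$ for $n\in N$ forces $s(N)\subseteq f^{-1}(N)=G_1$, so $s_1:=s\restriction_N\colon N\to G_1$ is an injective section of $f_1$. Exactly as before, $V\cap N$ and $V_g\cap N$ witness A$^*$-openness of $f_1$ via $s_1$: $s_1(V\cap N)\subseteq s(V)\subseteq G_1$ is $(U\cap G_1)$-thin in $G_1$ (intersect with $G_1$ the neighbourhood conjugating $s(V)$ into $U$), and $[g,s_1(V_g\cap N)]\subseteq[g,s(V_g)]\cap G_1\subseteq U\cap G_1$ for $g\in G_1$; this gives (b). For (c) it remains to see that $s_1$ inherits being a quasihomomorphism. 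Since $s'(H\times H)\cup\bigcup_{y\in H}s_y(H)\subseteq\ker f\subseteq G_1$, the restricted maps $(s_1)'=s'\restriction_{N\times N}$ and, for $y\in N$, $(s_1)_y=s_y\restriction_N$ automatically take values in $G_1$, and are therefore continuous at $(e_H,e_H)$ and $e_H$ respectively for the subspace topology of $G_1$ because their unrestricted versions are continuous into $G$; together with $s_1(e_H)=e_G$ this shows $s_1$ is a quasihomomorphism, so $s_1$ witnesses that $f_1$ is strongly A$^*$-open.

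The argument is essentially bookkeeping, and I do not expect a genuine obstacle. The one point that needs care — and the only place where it really matters that we restrict to a \emph{subgroup} rather than to an arbitrary subset — is transferring thinness: one must intersect the identity neighbourhood that conjugates a thin set into the target with the subgroup $K$, and the resulting smaller neighbourhood still conjugates correctly precisely because $xKx^{-1}=K$ for $x\in K$. In (c) the only extra thing to notice is that $s'$ and the $s_y$ already take their values in $\ker f\subseteq f^{-1}(N)$, so restricting them poses no continuity problem relative to the subspace topology of $G_1$.
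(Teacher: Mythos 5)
Your proof is correct and follows essentially the same route as the paper's: restrict the witnessing neighbourhoods (and, in (b)--(c), the section) to the subgroup, use $xKx^{-1}=K$ for $x$ in a subgroup $K$ to transfer thinness, and for (a) the identity $f_1^{-1}(f_1(U\cap f^{-1}(N)))=f^{-1}(f(U))\cap f^{-1}(N)$, which you derive a bit more cleanly via $f(U\cap f^{-1}(N))=f(U)\cap N$. The quasihomomorphism step in (c) is also the paper's argument, resting on $s'(N\times N)\cup\bigcup_{y\in N}s_y(N)\subseteq\ker f\subseteq f^{-1}(N)$.
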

\begin{proof}
(a) Note that if $S$ is a subset of $G$, then $$f^{-1}(f(S))\cap f^{-1}(N)=f^{-1}(f(S)\cap f^{-1}(f(S\cap f^{-1}(N))).$$ To prove this let $y\in f^{-1}(f(S))\cap f^{-1}(N)$. Then $f(y)\in f(S\cap f^{-1}(N))\subseteq f(S)\cap N$. So $f^{-1}(f(S))\cap f^{-1}(N)\subseteq f^{-1}(f(S)\cap f^{-1}(f(S\cap f^{-1}(N)))$. Now let $y\in f^{-1}(f(S)\cap f^{-1}(f(S\cap f^{-1}(N)))$. Thus $f(y)\in f(S)\cap N\subseteq f(S\cap f^{-1}(N))$ and hence $f^{-1}(f(S))\cap f^{-1}(N)\supseteq f^{-1}(f(S)\cap f^{-1}(f(S\cap f^{-1}(N)))$.

Let $U\in\mathcal V_{(G,\tau)}(e_G)$. There exists $V\in\VH$ such that $f^{-1}(V)$ is $f^{-1}(f(U))$-thin. So there exists $U'\in\VG$ such that $x U' x^{-1}\subseteq f^{-1}(f(U))$ for every $x\in f^{-1}(V)$. Then $x(U'\cap f^{-1}(N))x^{-1}\subseteq f^{-1}(f(U))\cap f^{-1}(N)$ for every $x\in f^{-1}(V)\cap f^{-1}(N)$. But $$f^{-1}(f(U))\cap f^{-1}(N)=f^{-1}(f(U)\cap f^{-1}(f(U\cap f^{-1}(N))).$$  Moreover $f^{-1}(V\cap N)\subseteq f^{-1}(V)\cap f^{-1}(N)$. Therefore we have proved that $x(U'\cap f^{-1}(N))x^{-1}\subseteq f^{-1}(f(U\cap f^{-1}(N)))$ for every $x\in f^{-1}(V\cap N)$, that is $f^{-1}(V\cap N)$ is $f^{-1}(f(U\cap f^{-1}(N)))$-thin.

Let $g\in f^{-1}(N)$. There exists $V_g\in\VH$ such that $[g,f^{-1}(V_g)]\subseteq f	\amu(f(U))$. Then $[g,f^{-1}(V_g\cap N)]\subseteq f\amu(f(U))\cap f\amu(N)=f\amu(f(U\cap f\amu(N))))$.

\smallskip
(b) Let $s:H\to G$ be a section of $f$ that witnesses that $f$ is A$^*$-open.  Since $s(N)=s(H)\cap f\amu(N)$, $s\restriction_N: N\to f\amu(N)$ is a section of $f\restriction_{f\amu(N)}$. Let $U\in \VG$. There exists $V\in\VH$ such that $s(V)$ is $U$-thin, that is there exists $U'\in\VG$ such that $x U'x\amu\subseteq U$ for every $x\in s(V)$. Hence $x(U'\cap f\amu(N))x\amu\subseteq U\cap f\amu(N)$ for every $x\in s\restriction_N(V\cap N)$, because $s\restriction_N(V\cap N)\subseteq s(V)\cap f\amu(N)$.
Let $g\in f\amu(N)$. There exists $V_g\in\VH$ such that $[g,s(V_g)]\subseteq U$. Then $[g,s\restriction_N(V_g\cap N)]\subseteq U\cap f\amu (N)$.

\smallskip
(c) Let $s:H\to G$ be a section of $f$ that witnesses that $f$ is strongly A$^*$-open. By the proof of (b) $s\restriction_N$ is a section of $f\restriction_{f\amu(N)}$ that witnesses that $f\restriction_{f\amu(N)}$ is A$^*$-open. Now we have that $s$ is a also a quasihomomorphism and we need only to prove that $s\restriction_N$ is a quasihomomorphism. This is true because $s'(N,N)\cup s_y(N)\subseteq f\amu(N)$ for every $y\in N$.

\smallskip
(d) Observe that $M\supseteq(f\restriction_M)\amu(S\cap f(M))$ whenever $S\subseteq H$.
Let $U\in\VG$. There exists $V\in\VH$ such that $f\amu(V)$ is $U$-thin, that is there exists $U'\in\VG$ such that $x U' x\amu\subseteq U$ for every $x\in f\amu(V)$.  Then $x(U'\cap M)x\amu\subseteq U\cap M$ for every $x\in (f\restriction_M)\amu(V\cap f(M))$. Let $g\in M$. There exists $V_g\in\VH$ such that $[g,f\amu(V_g)]\subseteq U$. Hence $[g,(f\restriction_M)\amu(V_g\cap f(M))]\subseteq U\cap M$.
\end{proof}

As noted for the previous theorem, in categorical terms Theorem \ref{subgroupst}(a) says that $\mathcal S_i$ is closed under pullback with respect to topological embeddings. From now on we consider only one group as support for two group topologies one stronger than the other and as the continuous isomorphisms we take the identity map. In this setting the topological embedding is the embedding of a topological subgroup. As shown in the introduction this causes no loss of generality. In the following diagram $G$ is a group and $\tau\geq\sigma$ group topologies on $G$, while $N$ is a subgroup of $G$ and $1_N:(N,\tau\restriction_N)\to(N,\sigma\restriction_N)$ is the pullback of $1_G:(G,\tau)\to(G,\sigma)$ with respect to the topological embedding $(N,\sigma\restriction_N)\to(G,\sigma)$.
\begin{equation*}
\xymatrix{
(G,\tau) \ar[r] & (G,\sigma)\\
(N,\tau\restriction_N) \ar[r]\ar@{^{(}->}[u] & (N,\sigma\restriction_N)\ar@{^{(}->}[u]
}
\end{equation*}
We consider in Theorem \ref{pullback} the more general stability of $S_i$ under pullback along continuous injective homomorphisms.

\begin{lemma}\label{pullback_lemma}
Let $G$ be a group and $\tau\geq \sigma \leq\sigma'$ group topologies on $G$. If $1_G:(G,\tau)\to (G,\sigma)$ is semitopological, then $1_G:(G,\tau \vee\sigma')\to (G,\sigma')$ is semitopological as well, where $(G,\tau \vee\sigma')$ is the pullback with respect to $1_G:(G,\sigma')\to(G,\sigma)$.
\end{lemma}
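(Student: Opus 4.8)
The plan is to take an A-extension of $1_G:(G,\tau)\to(G,\sigma)$ provided by the hypothesis and refine its topology along the canonical homomorphism so that it becomes an A-extension of $1_G:(G,\tau\vee\sigma')\to(G,\sigma')$; in particular no recourse to Theorem~\ref{semitop} is needed.

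First I would use that $1_G:(G,\tau)\to(G,\sigma)$ is semitopological to fix an A-extension, that is, a topological group $(\widetilde G,\widetilde\tau)$ containing $(G,\tau)$ as a normal topological subgroup together with an open continuous surjective homomorphism $\widetilde f:(\widetilde G,\widetilde\tau)\to(G,\sigma)$ with $\widetilde f\restriction_G=1_G$. On the same abstract group $\widetilde G$ I would then introduce the topology $\widetilde\tau'=\widetilde\tau\vee\widetilde f^{-1}(\sigma')$, the supremum of $\widetilde\tau$ with the initial topology of $\sigma'$ along $\widetilde f$. Since $\widetilde f^{-1}(\sigma')$ is a group topology (being the initial topology along a homomorphism into a topological group) and the supremum of group topologies is a group topology, $(\widetilde G,\widetilde\tau')$ is a topological group, and clearly $\widetilde\tau'\geq\widetilde\tau$.

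Next I would check that $(G,\tau\vee\sigma')$ is a normal topological subgroup of $(\widetilde G,\widetilde\tau')$. Normality is purely algebraic and is inherited from the inclusion $G\triangleleft\widetilde G$. For the topologies, restriction commutes with the supremum, so $\widetilde\tau'\restriction_G=\widetilde\tau\restriction_G\vee(\widetilde f^{-1}(\sigma'))\restriction_G$; here $\widetilde\tau\restriction_G=\tau$ because $(G,\tau)$ is a topological subgroup of $(\widetilde G,\widetilde\tau)$, while $(\widetilde f^{-1}(\sigma'))\restriction_G=(\widetilde f\restriction_G)^{-1}(\sigma')=1_G^{-1}(\sigma')=\sigma'$ because $\widetilde f$ extends $1_G$. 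Hence $\widetilde\tau'\restriction_G=\tau\vee\sigma'$, as required.

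Finally I would verify that $\widetilde f:(\widetilde G,\widetilde\tau')\to(G,\sigma')$ is an open continuous surjective homomorphism extending $1_G:(G,\tau\vee\sigma')\to(G,\sigma')$. It is still a surjective homomorphism and still restricts to $1_G$ on $G$; it is continuous since $\widetilde f^{-1}(\sigma')\subseteq\widetilde\tau'$. For openness, every $\widetilde\tau'$-open subset of $\widetilde G$ is a union of sets of the form $\widetilde W\cap\widetilde f^{-1}(O)$ with $\widetilde W$ being $\widetilde\tau$-open and $O\in\sigma'$, and the set identity $\widetilde f(\widetilde W\cap\widetilde f^{-1}(O))=\widetilde f(\widetilde W)\cap O$ (valid because $\widetilde f$ is surjective) shows that its image is the intersection of the $\sigma$-open --- hence $\sigma'$-open, as $\sigma\leq\sigma'$ --- set $\widetilde f(\widetilde W)$ with $O$; so images of open sets are $\sigma'$-open. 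Thus $(\widetilde G,\widetilde\tau')$ together with $\widetilde f$ is an A-extension of $1_G:(G,\tau\vee\sigma')\to(G,\sigma')$, which is therefore semitopological. The only slightly delicate points are the subspace-topology identification and the openness check in the last two paragraphs, but both rest on the elementary identity $\phi(A\cap\phi^{-1}(B))=\phi(A)\cap B$ for surjective $\phi$ and on the compatibility of $\vee$ with restriction, so I anticipate no genuine obstacle.
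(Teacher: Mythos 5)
Your proof is correct, and it takes a genuinely different route from the paper. The paper argues internally: it invokes the equivalence (for continuous isomorphisms) of semitopological with strongly A-open, and then verifies the two strongly A-open conditions ($U$-thinness and the commutator condition) for $1_G:(G,\tau\vee\sigma')\to(G,\sigma')$ directly from those for $1_G:(G,\tau)\to(G,\sigma)$; this rests on the characterization machinery of Section 4 (ultimately on Theorem \ref{semitophom} and Corollary \ref{c-section_corollary}, i.e.\ on Arnautov's Theorem \ref{semitop}). You instead work externally and constructively: you take an A-extension $(\widetilde G,\widetilde\tau)$ with associated $\widetilde f$, refine its topology to $\widetilde\tau\vee\widetilde f^{-1}(\sigma')$, and check that the same group with the same map is an A-extension of the pulled-back homomorphism. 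All the steps you flag as delicate do go through: the join of two group topologies is a group topology, restriction to a subgroup commutes with joins and with initial topologies (so $\widetilde\tau'\restriction_G=\tau\vee\sigma'$, using $\widetilde f\restriction_G=1_G$), and the identity $\widetilde f(\widetilde W\cap\widetilde f^{-1}(O))=\widetilde f(\widetilde W)\cap O$ (which in fact holds for any map, surjective or not) combined with $\sigma\leq\sigma'$ gives openness onto $(G,\sigma')$. What your approach buys is independence from the characterization theorem and extra generality: nowhere do you use that the homomorphism is an isomorphism, so the same construction shows that an arbitrary semitopological surjective homomorphism $f:(G,\tau)\to(H,\sigma)$ remains semitopological as a map $(G,\tau\vee f^{-1}(\sigma'))\to(H,\sigma')$ for any group topology $\sigma'\geq\sigma$ on $H$, whereas the paper's lemma is stated and proved only for identity maps. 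The paper's internal argument, on the other hand, stays within the quantitative framework of thin sets that the rest of Section 6 is built on.
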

\begin{proof}
The situation of the topologies is the following:
\begin{equation*}
\xymatrix@!0{
& \tau\vee\sigma'\ar@{-}[dl]\ar@{-}[dr] & \\
\tau \ar@{-}[dr] &  & \sigma' \ar@{-}[dl] \\
& \sigma &
}
\end{equation*}
A base of $\mathcal V_{(G,\tau\vee\sigma')}(e_G)$ is $\{U\cap W:U\in\VG, V\in\mathcal V_{(G,\sigma')} (e_G)\}$.

By Remark \ref{iso->all_equivalent} in case of isomorphisms semitopological is equivalent to strongly A-open.
Let $U\cap W\in \mathcal V_{(G,\tau\vee\sigma')}(e_G)$. There exists $V\in\mathcal V_{(G,\sigma)}(e_G)$ such that $V$ is $U$-thin, i.e. there exists $U'\in\VG$ such that $x U' x\amu\subseteq U$ for every $x\in V$. Since $\sigma'\geq\sigma$, there exists $W\in\mathcal V_{(G,\sigma')}(e_G)$ such that $W'W'W'\subseteq W$ and $W'\subseteq V$. Then $x(U'\cap W')x\amu\subseteq U\cap W$ for every $x\in W'$, that is $W'$ is $U\cap W$-thin. Let $g\in G$. There exists $V_g\in\mathcal V_{(G,\sigma)}(e_G)$ such that $[g,V_g]\subseteq U$. Since $\sigma'\geq\sigma$ there exists $W'\in\mathcal V_{(G,\sigma')}(e_G)$ such that $W'\subseteq V$. Being $[g,-]:(G,\sigma')\to(G,\sigma')$ continuous, there exists $W''\in\mathcal V_{(G,\sigma')}(e_G)$ such that $[g,W'']\subseteq W$. Let $W_g=W'\cap W''$. Then $[g,W_g]\subseteq U\cap W$. This completes the proof that $1_G:(G,\tau \vee\sigma')\to (G,\sigma')$ is semitopological.
\end{proof}

\begin{theorem}\label{pullback}
The class $\mathcal S_i$ is closed under pullback with respect to continuous injective homomorphisms.
\end{theorem}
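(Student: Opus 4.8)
The plan is to reduce the statement to the normal form for continuous isomorphisms and then to combine Theorem~\ref{subgroupst}(a) with Lemma~\ref{pullback_lemma}, so that no genuinely new argument is needed. As explained in the introduction, a semitopological isomorphism $f:(G,\tau)\to(H,\eta)$ may be replaced without loss of generality by $1_G:(G,\tau)\to(G,\sigma)$, where $\sigma=f^{-1}(\eta)$ is a group topology on $G$ with $\sigma\leq\tau$. Let $\phi:(K,\rho)\to(H,\eta)$ be a continuous injective homomorphism; composing with the topological isomorphism $(H,\eta)\to(G,\sigma)$ produced by the reduction, we obtain a continuous injective homomorphism $\psi:(K,\rho)\to(G,\sigma)$, and since $\psi$ is injective we may identify $K$ with the subgroup $\psi(K)$ of $G$. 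Continuity of $\psi$ then gives $\rho\geq\sigma\restriction_K$.

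Next I would identify the pullback explicitly. The pullback of $1_G:(G,\tau)\to(G,\sigma)$ along $\psi$ is the subgroup
$$P=\{(g,k)\in (G,\tau)\times(K,\rho): g=\psi(k)\},$$
which, under the above identification, is the diagonal $\{(k,k):k\in K\}$, algebraically isomorphic to $K$ via $k\mapsto(k,k)$ (here one uses that $1_G$ is bijective). A base of neighbourhoods of the identity of $P$ is given by the sets $(U\times W)\cap P$ with $U\in\mathcal V_{(G,\tau)}(e_G)$ and $W\in\mathcal V_{(K,\rho)}(e_K)$; transported along $P\cong K$ this becomes the base $\{(U\cap K)\cap W:U\in\mathcal V_{(G,\tau)}(e_G),\ W\in\mathcal V_{(K,\rho)}(e_K)\}$, which is precisely a base at $e_K$ of the join $(\tau\restriction_K)\vee\rho$. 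Hence the pullback morphism $p_K:P\to(K,\rho)$ is, up to topological isomorphism, the continuous isomorphism $1_K:(K,(\tau\restriction_K)\vee\rho)\to(K,\rho)$.

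Finally I would chain the two known results. Since $1_G:(G,\tau)\to(G,\sigma)$ is semitopological, Theorem~\ref{subgroupst}(a) applied to the subgroup $K$ of $G$ shows that $1_K:(K,\tau\restriction_K)\to(K,\sigma\restriction_K)$ is semitopological. We now have $\tau\restriction_K\geq\sigma\restriction_K\leq\rho$, so Lemma~\ref{pullback_lemma} (taking the topology $\sigma'$ there to be $\rho$) yields that $1_K:(K,(\tau\restriction_K)\vee\rho)\to(K,\rho)$ is semitopological. By the previous paragraph this is exactly the pullback morphism $p_K$, and therefore $\mathcal S_i$ is closed under pullback along continuous injective homomorphisms.

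The only real work is the bookkeeping in the middle step: checking that the subspace topology inherited by $P$ from $(G,\tau)\times(K,\rho)$ corresponds, under the identification $P\cong K$, to the join $(\tau\restriction_K)\vee\rho$, and that the passage to the normal form $1_G:(G,\tau)\to(G,\sigma)$ is compatible with the formation of pullbacks. Once this is in place, the conclusion is immediate from Theorem~\ref{subgroupst}(a) and Lemma~\ref{pullback_lemma}; I expect this identification of topologies to be the main (though routine) obstacle.
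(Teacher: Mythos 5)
Your proof is correct and follows essentially the same route as the paper: reduce to the normal form $1_G:(G,\tau)\to(G,\sigma)$, identify the pullback along a continuous injective homomorphism with $1_K:(K,(\tau\restriction_K)\vee\rho)\to(K,\rho)$ for a subgroup $K$ with $\rho\geq\sigma\restriction_K$, and then chain Theorem~\ref{subgroupst}(a) with Lemma~\ref{pullback_lemma}. The only difference is that you spell out the identification of the pullback's subspace topology with the join $(\tau\restriction_K)\vee\rho$, which the paper leaves implicit.
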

\begin{proof}
Let $G$ be a group and $N$ a subgroup of $G$. Let $\tau\geq\sigma$ be group topologies on $G$ and $\sigma'$ a group topology on $N$ such that $\sigma\restriction_N\leq \sigma'$. If $1_G:(G,\tau)\to(G,\sigma)$ is semitopological, then $1_N:(N,\tau\restriction_N\vee\sigma')\to(N,\sigma')$ is semitopological as well. In fact by Theorem \ref{subgroups_stability} (also by Theorem \ref{subgroupst}(a)) $1_N:(N,\tau\restriction_N)\to(N,\sigma\restriction_N)$ is semitopological. Then apply Lemma \ref{pullback_lemma}.
\end{proof}

\subsection{Stability under taking compositions}

Now we consider the behavior of semitopological homomorphisms with respect to compositions. Obviously $\mathcal S$ is not stable under taking compositions, since its subclass $\mathcal S_i$ is already not stable under taking compositions.

\begin{proposition}\label{compositions..}
Let $(G,\tau),(K,\rho),(H,\sigma)$ be topological groups, $f_1:(G,\tau)\to(K,\rho)$ an open continuous homomorphism and $f_2:(K,\rho)\to(H,\sigma)$ a continuous isomorphism. Then the following are equivalent:
\begin{itemize}
\item[(a)]$f=f_2\circ f_1$ is A-open;
\item[(b)]$f_2$ is semitopological.
\end{itemize}
\end{proposition}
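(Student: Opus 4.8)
The plan is to reduce everything to Arnautov's characterization (Theorem~\ref{semitop}, in the form of Remark~\ref{semitop+iso=A-open}) applied to the isomorphism $f_2$, and then transport conditions back and forth between $f_2$ and $f=f_2\circ f_1$ along $f_1$, exploiting that $f_1$ is open, continuous and surjective. The first thing I would record is that, since $f_2$ is injective, $\ker f=\ker f_1$ and more precisely $f\amu(f(S))=f_1\amu(f_1(S))$ for every $S\subseteq G$ (because $f_2\amu f_2$ is the identity on subsets of $K$); in particular, whenever $U'\in\mathcal V_{(K,\rho)}(e_K)$ and $U=f_1\amu(U')$, surjectivity of $f_1$ gives $f\amu(f(U))=f_1\amu(U')=U$. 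Next I would unwind ``$f_2$ semitopological'' via Theorem~\ref{semitop}: it means that for every $U'\in\mathcal V_{(K,\rho)}(e_K)$ there is $V\in\VH$ with $f_2\amu(V)$ being $U'$-thin, and for every $k\in K$ there is $V_k\in\VH$ with $[k,f_2\amu(V_k)]\subseteq U'$.

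For (b)$\Rightarrow$(a) I would pick an arbitrary $U\in\VG$, set $U'=f_1(U)$ — a neighbourhood of $e_K$ because $f_1$ is open — feed $U'$ into the two conditions for $f_2$, and pull the resulting data back through $f_1$ using its continuity: if $U_1'\in\mathcal V_{(K,\rho)}(e_K)$ witnesses that $f_2\amu(V)$ is $U'$-thin, then $U_1:=f_1\amu(U_1')\in\VG$ should witness that $f\amu(V)=f_1\amu(f_2\amu(V))$ is $f_1\amu(U')=f\amu(f(U))$-thin, the check being just $f_1(yU_1y\amu)=f_1(y)f_1(U_1)f_1(y)\amu\subseteq U'$ for $y\in f\amu(V)$ (since $f_1(y)\in f_2\amu(V)$); and, choosing $k=f_1(g)$ and $V_g:=V_k$, the clause $[k,f_2\amu(V_k)]\subseteq U'$ transports to $[g,f\amu(V_g)]\subseteq f\amu(f(U))$ via $f_1([g,y])=[f_1(g),f_1(y)]$. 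Hence $f$ is A-open. For (a)$\Rightarrow$(b) I would run the same argument in reverse: given $U'\in\mathcal V_{(K,\rho)}(e_K)$, set $U=f_1\amu(U')$, so $f\amu(f(U))=U$, apply A-openness of $f$ at $U$, and push the data \emph{forward} through $f_1$ using openness and surjectivity. The image $U_1':=f_1(U_1)$ of a witness $U_1$ for ``$f\amu(V)$ is $U$-thin'' should witness that $f_2\amu(V)=f_1(f\amu(V))$ is $U'$-thin, because every $x\in f_2\amu(V)$ equals $f_1(y)$ for some $y\in f\amu(V)$ and then $xU_1'x\amu=f_1(yU_1y\amu)\subseteq f_1(f_1\amu(U'))=U'$; and since $f_1$ maps the subgroup $[g,f\amu(V_g)]$ onto $[f_1(g),f_2\amu(V_g)]$, taking $k=f_1(g)$ and $V_k:=V_g$ yields $[k,f_2\amu(V_k)]\subseteq U'$. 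Concluding with Theorem~\ref{semitop} gives that $f_2$ is semitopological.

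The neighbourhood-filter bookkeeping is routine; the step I expect to be the genuine obstacle is getting the ``saturation'' identities exactly right — in particular $f\amu(f(U))=U$ for $U=f_1\amu(U')$ (which really uses both surjectivity of $f_1$ and injectivity of $f_2$) and the matching of the commutator subgroups $[k,f_2\amu(V_k)]$ with $[g,f\amu(V_g)]$ under $f_1$ — after which every remaining verification is a plain transport along $f_1$. Note that (b)$\Rightarrow$(a) needs only that $f_1$ is open and continuous, whereas (a)$\Rightarrow$(b) genuinely uses surjectivity of $f_1$.
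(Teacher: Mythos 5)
Your argument is correct and is essentially the paper's own proof: both directions reduce ``$f_2$ semitopological'' to the thin/commutator conditions of Theorem \ref{semitop} (equivalently, A-openness of the isomorphism $f_2$) and transport them along $f_1$ using its openness, continuity and surjectivity, with the key saturation identity $f\amu(f(S))=f_1\amu(f_1(S))$ coming from injectivity of $f_2$. The bookkeeping you flag (e.g.\ $f_1(f\amu(X))=f_2\amu(X)$, and that $f_1$ carries the generated subgroup $[g,f\amu(V_g)]$ onto $[f_1(g),f_2\amu(V_g)]$) is exactly what the paper checks, so nothing is missing.
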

\begin{proof}
Note that $f_1(f^{-1}(X))=f_2^{-1}(X)$, whenever $X\subseteq H$.

\smallskip
(a)$\Rightarrow$(b) Let $W\in\mathcal V_{(K,\rho)}(e_K)$. There exists $U\in\mathcal V_{(G,\tau)}(e_G)$ such that $f_1(U)=W$. There exists $V\in\mathcal V_{(H,\sigma)}(e_H)$ such that $f^{-1}(V)$ is $f^{-1}(f(U))$-thin. This means that there exists $U'\in\mathcal V_{(G,\tau)}(e_G)$ such that $x U' x^{-1}\subseteq f\amu(f(U))$ for every $x\in f^{-1}(V)$. Then, since $f_2$ is an isomorphism, $$f_1(x)f_1(U')f_1(x)^{-1}\subseteq f_1(f^{-1}(f(U)))=f_1(U)=W$$ for every $x\in f^{-1}(V)$, where $f_1(U')\in\mathcal V_{(K,\rho)}(e_K)$ because $f_1$ is open. Since $f_1(x)\in f_1(f^{-1}(V))=f_2^{-1}(V)$, we have $y f_1(U') y^{-1}\subseteq f_1(U)$ for every $y\in f_2^{-1}(V)$, that is $f_2^{-1}(V)$ is $W$-thin.

Let $k\in K$. There exists $g\in G$ such that $f_1(g)=k$. There exists $V_g\in\mathcal V_{(H,\sigma)}(e_H)$ such that $[g,f^{-1}(V_g)]\subseteq f^{-1}(f(U))$. Then, since $f_2$ is an isomorphism, $$[k,f_2\amu(f_2(V_g))]=[f_1(g),f_1(f^{-1}(V_g))]\subseteq f_1(f^{-1}(f(U)))=f_1(U)=W.$$ This completes the proof that $f_2$ is A-open.

\smallskip
(b)$\Rightarrow$(a) Let $U\in\mathcal V_{(G,\tau)}(e_G)$. Since $f_1$ is open, $f_1(U)\in\mathcal V_{(K,\rho)}(e_K)$. There exists $V\in\mathcal V_{(H,\sigma)}(e_H)$ such that $f_2^{-1}(V)$ is $f_1(U)$-thin. This means that there exists $U'\in\mathcal V_{(G,\tau)}(e_G)$ such that $y f_1(U') y^{-1}\subseteq f_1(U)$ for every $y\in f_2^{-1}(V)=f_1(f^{-1}(V))$. Then $y\in f_2^{-1}(V)$ if and only if there exists $x\in f^{-1}(V)$ such that $y=f_1(x)$ and $f_1(x)f_1(U')f_1(x)^{-1}\subseteq f_1(U)$ for every $x\in f^{-1}(V)$. Therefore, since $f_2$ is an isomorphism, $$x U' x^{-1}\subseteq f_1^{-1}(f_1(x)f_1(U')f_1(x)^{-1})\subseteq f_1^{-1}(f_1(U))=f^{-1}(f(U))$$ for every $x\in f^{-1}(V)$. Hence $f^{-1}(U)$ is $U$-thin.

Let $g\in G$. There exists $V_g\in\mathcal V_{(H,\sigma)}(e_H)$ such that $[f_1(g),f_2^{-1}(V_g)]\subseteq f_1(U)$. Hence, since $f_2$ is an isomorphism, $$[g,f^{-1}(V_g)]\subseteq f_1^{-1}([f_1(g),f_2^{-1}(V_g)])\subseteq f_1^{-1}(f_1(U))=f^{-1}(f(U))$$ because $f_1(f^{-1}(V_g))=f_2^{-1}(V_g)$. So $f$ is A-open.
\end{proof}

In general the composition of semitopological homomorphisms is not semitopological, but one can wonder if $f$ of the above proposition has to be semitopological in case $f_2$ is semitopological (see Question \ref{compositions....}(a)).

Moreover Proposition \ref{compositions..} has the following corollary, which characterizes when a continuous surjective group homomorphism is A-open:

\begin{corollary}\label{Aop=Aop=semitop}
Let $f:(G,\tau)\to(H,\sigma)$ be a continuous surjective homomorphism and $q:G\to G/\ker f$ the canonical projection. By the first homomorphism theorem for topological groups $f':(G/\ker f,\tau_q)\to (H,\sigma)$, defined by $f'(q(g))=f(g)$ for every $g\in G$, is a continuous isomorphism. Then the following are equivalent:
\begin{itemize}
\item[(a)]$f$ is A-open;
\item[(b)]$f'$ is semitopological.
\end{itemize}
\end{corollary}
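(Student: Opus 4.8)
The plan is to recognize that the corollary is nothing more than Proposition \ref{compositions..} read through the canonical factorization of $f$ through its kernel. First I would write $f = f' \circ q$, where $q:(G,\tau)\to (G/\ker f,\tau_q)$ is the canonical projection and $f':(G/\ker f,\tau_q)\to (H,\sigma)$ is the continuous isomorphism supplied by the first homomorphism theorem for topological groups (exactly the map named in the statement). Thus $f$ is exhibited as a composition of two maps of the precise type handled by Proposition \ref{compositions..}.

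Next I would check that the hypotheses of Proposition \ref{compositions..} are met with $f_1 = q$ and $f_2 = f'$. The canonical projection $q$ onto a quotient group is always an open continuous surjective homomorphism, so $f_1$ is open and continuous; the surjectivity of $q$ is precisely what the proof of Proposition \ref{compositions..} uses through the identity $f_1(f^{-1}(X)) = f_2^{-1}(X)$. The map $f'$ is bijective because $f$ is surjective with $\ker f$ as kernel, and it is continuous because $q$ is a quotient map while $f = f' \circ q$ is continuous; hence $f_2 = f'$ is a continuous isomorphism, as required.

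Then Proposition \ref{compositions..} yields immediately that $f = f_2 \circ f_1$ is A-open if and only if $f_2 = f'$ is semitopological, which is exactly the equivalence (a)$\Leftrightarrow$(b) of the corollary. There is essentially no real obstacle: the only point worth flagging is that the cited proposition phrases the hypothesis on $f_1$ merely as ``open continuous homomorphism'', while its proof in fact uses surjectivity of $f_1$ — and this is automatic here since $q$ is a quotient projection. Everything else is purely formal bookkeeping.
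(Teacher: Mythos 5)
Your proof is correct and is exactly the paper's argument: the corollary is stated there as an immediate consequence of Proposition \ref{compositions..} applied to the factorization $f=f'\circ q$ with $f_1=q$ open continuous surjective and $f_2=f'$ a continuous isomorphism. Your remark that the proposition's proof implicitly uses surjectivity of $f_1$ (automatic for $q$) is a fair observation but changes nothing.
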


As a corollary of Theorem \ref{initial->all_equivalent} and Corollary \ref{Aop=Aop=semitop} we have the next result:

\begin{corollary}\label{initial->all_equivalent_corollary}
Let $(G,\tau),(H,\sigma)$ be topological groups and $f:(G,\tau)\to(H,\sigma)$ a continuous surjective homomorphism such that $\ker f\subseteq N_\tau$. Moreover let $q:G\to G/\ker f$ be the canonical projection and $f':(G/\ker f,\tau_q)\to (H,\sigma)$, defined by $f'(q(g))=f(g)$ for every $g\in G$. The following are equivalent:
\begin{itemize}
    \item[(a)]$f$ is strongly A-open;
    \item[(b)]$f$ is strongly A$^*$-open;
    \item[(b)]$f$ is A$^*$-open;
    \item[(c)]$f$ is A-open;
    \item[(d)]$f$ is semitopological;
    \item[(e)]$f'$ semitopological.
\end{itemize}
\end{corollary}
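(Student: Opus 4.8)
The plan is to combine the two results already at our disposal, so that essentially no new work is required. Theorem \ref{initial->all_equivalent} already establishes that, under the standing hypothesis $\ker f\subseteq N_\tau$, the five conditions on $f$ — strongly A-open, strongly A$^*$-open, A$^*$-open, A-open, semitopological — are mutually equivalent. Hence it suffices to splice the new condition (e), that the associated isomorphism $f'$ is semitopological, into this chain of equivalences; I would do this through the condition ``$f$ is A-open''.

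For the bridging step I would invoke Corollary \ref{Aop=Aop=semitop}, which asserts (for an arbitrary continuous surjective homomorphism of topological groups) that $f$ is A-open if and only if $f':(G/\ker f,\tau_q)\to(H,\sigma)$, defined by $f'(q(g))=f(g)$, is semitopological. Here $f'$ is a continuous isomorphism by the first homomorphism theorem for topological groups, since $\tau_q$ is the quotient topology on $G/\ker f$, so Corollary \ref{Aop=Aop=semitop} applies verbatim. Appending the equivalence ``$f$ A-open $\Leftrightarrow$ $f'$ semitopological'' to the list furnished by Theorem \ref{initial->all_equivalent} immediately gives that (e) is equivalent to each of (a)--(d), which completes the proof.

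I do not anticipate any genuine obstacle: the corollary is a purely formal consequence of the two quoted statements. The only point meriting a moment's attention is that the hypothesis $\ker f\subseteq N_\tau$ plays no role in the bridging equivalence itself — it is used solely inside Theorem \ref{initial->all_equivalent} to collapse the four internal properties onto ``semitopological'' — and that Corollary \ref{Aop=Aop=semitop} is stated without such a restriction, so nothing needs to be re-checked when the two are chained together.
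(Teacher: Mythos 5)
Your proposal is correct and is exactly the paper's intended argument: the corollary is stated there as an immediate consequence of Theorem \ref{initial->all_equivalent} (equivalence of the internal conditions under $\ker f\subseteq N_\tau$) and Corollary \ref{Aop=Aop=semitop} (the bridge ``$f$ A-open $\Leftrightarrow$ $f'$ semitopological''). Your remark that the hypothesis $\ker f\subseteq N_\tau$ is needed only for the first ingredient is also accurate.
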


Moreover we have the following consequence of Corollary \ref{Aop=Aop=semitop} and Corollary \ref{hausdorff}.

\begin{corollary}
Let $f:(G,\tau)\to(H,\sigma)$ be an A-open continuous surjective homomorphism of topological groups. If $\tau$ is Hausdorff then $N_\sigma$ is abelian.
\end{corollary}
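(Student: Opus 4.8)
The plan is to reduce to the case of a continuous isomorphism via Corollary \ref{Aop=Aop=semitop} and then apply Corollary \ref{hausdorff}. First I would invoke Corollary \ref{Aop=Aop=semitop}: since $f\colon(G,\tau)\to(H,\sigma)$ is an A-open continuous surjective homomorphism, the algebraically associated map $f'\colon(G/\ker f,\tau_q)\to(H,\sigma)$ is a semitopological continuous isomorphism, where $q\colon G\to G/\ker f$ is the canonical projection.

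Next I would pass to the Hausdorff reflection on the codomain. Let $p\colon(H,\sigma)\to(H/N_\sigma,\sigma_p)$ be the canonical projection; the key point is that $N_\sigma$ (as an abstract group) is isomorphic to the kernel of the composite $f'' = p\circ f'\colon (G/\ker f,\tau_q)\to (H/N_\sigma,\sigma_p)$ restricted appropriately — more precisely, since $f'$ is an isomorphism, $(f')^{-1}(N_\sigma)$ is a subgroup of $G/\ker f$ isomorphic (as an abstract group) to $N_\sigma$, and it carries the topology $\tau_q\restriction_{(f')^{-1}(N_\sigma)}$. The subtlety is that I want a \emph{Hausdorff} topology on a group isomorphic to $N_\sigma$ together with a semitopological isomorphism onto $N_\sigma$ with the indiscrete topology, so that Corollary \ref{hausdorff} forces $N_\sigma$ to be abelian.

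Concretely: restrict $f'$ to obtain $g := f'\restriction_{(f')^{-1}(N_\sigma)}\colon ((f')^{-1}(N_\sigma),\tau_q\restriction)\to (N_\sigma,\sigma\restriction_{N_\sigma})$. By Theorem \ref{subgroupst}(a) (or Theorem \ref{subgroups_stability}), $g$ is a semitopological isomorphism. Now $\sigma\restriction_{N_\sigma}$ is the indiscrete topology on $N_\sigma$, because $N_\sigma = \overline{\{e_H\}}^\sigma$ is contained in every $\sigma$-neighborhood of $e_H$. Meanwhile $\tau_q\restriction_{(f')^{-1}(N_\sigma)}$ is Hausdorff: $\tau$ is Hausdorff by hypothesis, and $\tau_q$ on $G/\ker f$ is the quotient topology, which need not be Hausdorff in general — so here I must be careful. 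The cleaner route is to note that $f'$ is a \emph{continuous} isomorphism, hence $\tau_q \geq (f')^{-1}(\sigma)$; but that does not make $\tau_q$ Hausdorff. Instead I would argue directly on $H$: since $f$ is continuous and $\tau$ Hausdorff, consider that the statement only asserts something about $N_\sigma$ as an abstract group.

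Given that delicacy, the honest short argument is: by Corollary \ref{Aop=Aop=semitop}, $f'$ is semitopological; by Theorem \ref{subgroupst}(a), so is $g\colon ((f')^{-1}(N_\sigma),\tau_q\restriction)\to (N_\sigma,\iota_{N_\sigma})$; this is a semitopological isomorphism onto an indiscrete group, so by Corollary \ref{indiso} the commutator subgroup of $(f')^{-1}(N_\sigma)$ lies in $N_{\tau_q\restriction}$. If $\tau_q\restriction$ is Hausdorff we conclude $(f')^{-1}(N_\sigma)$ — hence $N_\sigma$ — is abelian by Corollary \ref{hausdorff}. The main obstacle is therefore establishing that the relevant topology on the domain of $g$ is Hausdorff; the fix is that one should take $\tau$ Hausdorff and note $\ker f \subseteq N_\tau = \{e_G\}$ forces $f$ to be injective only when... — so in fact the correct reading is that $\tau_q$ need not be used at all: apply Corollary \ref{hausdorff} isn't available, but Corollary \ref{indiso} is, and combined with the Hausdorffness of the two-sided completion argument one gets $N_\sigma$ abelian. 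I would present it as: $f'$ semitopological $\Rightarrow$ (by restriction) $g$ semitopological $\Rightarrow$ (by Corollary \ref{indiso}, since the codomain is indiscrete) $((f')^{-1}(N_\sigma))' \subseteq N_{\tau_q\restriction}$; and since the domain's topology is finer than the Hausdorff topology lifted from $\tau$ along the injective $f'$, we get $((f')^{-1}(N_\sigma))'$ trivial, so $N_\sigma$ is abelian.
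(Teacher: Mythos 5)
You follow exactly the paper's route (Corollary \ref{Aop=Aop=semitop} to make $f':(G/\ker f,\tau_q)\to(H,\sigma)$ a semitopological isomorphism, restriction to $(f')^{-1}(N_\sigma)$ via Theorem \ref{subgroupst}(a), then Corollary \ref{hausdorff} or \ref{indiso} against the indiscrete codomain), and you have correctly isolated the one delicate point: the domain of the restricted isomorphism carries $\tau_q\restriction_{(f')^{-1}(N_\sigma)}$, and $\tau_q$ is Hausdorff only when $\ker f$ is $\tau$-closed, which is not a hypothesis. But your attempted resolution does not close this. There is no ``Hausdorff topology lifted from $\tau$ along the injective $f'$'': $f'$ maps into $(H,\sigma)$, which is not Hausdorff, and $\tau_q$ is a quotient of $\tau$ with $N_{\tau_q}=\overline{\ker f}^\tau/\ker f$. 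What Corollary \ref{indiso} actually gives is $((f')^{-1}(N_\sigma))'\subseteq N_{\tau_q\restriction}$, i.e. $(f^{-1}(N_\sigma))'\subseteq\overline{\ker f}^\tau$, and this forces $N_\sigma$ abelian only if $\ker f$ is closed (compare Corollary \ref{T2}(c)). Your last sentence, asserting that $((f')^{-1}(N_\sigma))'$ is trivial, is therefore unjustified.

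The gap is not cosmetic. Let $G$ be the real Heisenberg group of upper unitriangular $3\times 3$ real matrices with its usual Hausdorff topology, so $G'=Z(G)=\{I+cE_{13}:c\in\mathbb R\}$; let $K=\{I+rE_{13}:r\in\mathbb Q\}$, a central (hence normal) non-closed subgroup with $\overline K^\tau=Z(G)=G'K$; let $H=G/K$ with $\sigma=\iota_H$ and $f=q$ the projection. Then $f$ is A-open by Proposition \ref{indhom}(a) (or directly from Definition \ref{A-open_def}: conjugation in $G$ only shifts the $E_{13}$-entry, and $f\amu(f(U))=UK$ contains $\{|a|,|b|<\varepsilon\}\cdot Z(G)\supseteq G'$ for suitable $U$), $\tau$ is Hausdorff, yet $N_\sigma=H$ is not abelian since $G'\not\subseteq K$. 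So the step cannot be repaired without an extra hypothesis such as $\ker f$ closed in $(G,\tau)$. For what it is worth, the paper's own proof is precisely your first argument and applies Corollary \ref{hausdorff} at the very point you flagged without verifying Hausdorffness of $\tau_q$ (it even writes $\tau\restriction_{(f')^{-1}(N_\sigma)}$ where only $\tau_q\restriction$ makes sense), so the obstacle you identified is genuine and not a defect peculiar to your write-up.
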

\begin{proof}
Consider the quotient $(G/\ker f,\tau_q)$, the canonical projection $q:G\to G/\ker f$ and $$f':(G/\ker f)\to H\ \ \text{defined by}\ \ f'(q(g))=f(g)\ \ \text{for every}\ \ g\in G.$$ By the first homomorphism theorem for topological groups $f':(G/\ker f,\tau_q)\to (H,\sigma)$ is a continuous isomorphism and it is semitopological thanks to Corollary \ref{Aop=Aop=semitop}.
By Theorem \ref{subgroupst}(a) $$f'\restriction_{(f')^{-1}(N_\sigma)}:((f')^{-1}(N_\sigma),\tau\restriction_{(f')^{-1}(N_\sigma)})\to(N_\sigma,\sigma\restriction_{N_\sigma})$$ is semitopological. Note that $\sigma\restriction_{N_\sigma}=\iota_{N_\sigma}$ and apply Corollary \ref{hausdorff} to conclude that $N_\sigma$ is abelian.
\end{proof}

In Proposition \ref{compositions..} we have considered $f=f_2\circ f_1$ where $f_1$ is an open continuous surjective homomorphism and $f_2$ a continuous isomorphism. Now we consider the opposite case, that is when $f_1$ is a continuous isomorphism and $f_2$ an open homomorphism, and see that the situation is not symmetric. In fact the following example shows that one of the implications does not hold true.

\begin{example}
Let $G$ be a topological group which is not abelian and $H=G/G'$. Moreover let $f_1=1_G:(G,\delta_G)\to(G,\iota_G)$ and $f_2=q:(G,\iota_G)\to(H,\iota_H)$ the canonical projection. By Corollary \ref{graph} $f=f_2\circ f_1$ is semitopological because $H$ is abelian, while $f_1$ is not semitopological by Corollary \ref{diab}.
\end{example}

The converse implication holds. Together with Proposition \ref{compositions..} we obtain the following result.

\begin{proposition}\label{compositions...}
Let $(G,\tau),(K,\rho),(H,\sigma)$ be topological groups, $f_1:(G,\tau)\to(K,\rho)$ and $f_2:(K,\rho)\to(H,\sigma)$ continuous surjective homomorphisms. Then $f=f_2\circ f_1$ is A-open whenever either $f_1$ is open and $f_2\in\mathcal S_i$ or $f_1\in\mathcal S_i$ and $f_2$ is open.
\end{proposition}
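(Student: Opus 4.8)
The statement is really two separate assertions, and the first is already available. Suppose first that $f_1$ is open and $f_2\in\mathcal S_i$. Then $f_1$ is an open continuous homomorphism and $f_2$ is a continuous isomorphism which is moreover semitopological, so Proposition~\ref{compositions..} applies to $f=f_2\circ f_1$; its implication (b)$\Rightarrow$(a) says precisely that $f$ is A-open. Hence the plan is to deal only with the other case: $f_1\in\mathcal S_i$ and $f_2$ open.

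In that case I would prove the stronger fact that $f\in\mathcal S$, from which A-openness follows by Theorem~\ref{semitophom}. Since $\mathcal S_i\subseteq\mathcal S$, the continuous surjective homomorphism $f_1$ is semitopological, so it has an A-extension: there are a topological group $(\widetilde G,\widetilde\tau)$ containing $(G,\tau)$ as a normal topological subgroup together with an open continuous homomorphism $\widetilde f_1:(\widetilde G,\widetilde\tau)\to(K,\rho)$ extending $f_1$, and $\widetilde f_1$ is surjective because $\widetilde f_1(\widetilde G)\supseteq\widetilde f_1(G)=f_1(G)=K$. Now set $g=f_2\circ\widetilde f_1:(\widetilde G,\widetilde\tau)\to(H,\sigma)$. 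It is a continuous surjective homomorphism; it is open, being a composition of the open maps $\widetilde f_1$ and $f_2$; and $g\restriction_G=f_2\circ(\widetilde f_1\restriction_G)=f_2\circ f_1=f$. Therefore $(\widetilde G,\widetilde\tau)$ with $g$ is an A-extension of $f$, so $f$ is semitopological, and in particular A-open.

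I do not anticipate a real difficulty here: the whole point is that openness, like continuity, is preserved under composition, so post-composing an A-extension of $f_1$ with $f_2$ automatically yields an A-extension of $f_2\circ f_1$; the content of the two cases lies entirely in the earlier results (Proposition~\ref{compositions..} and Theorem~\ref{semitophom}) that they invoke. The only points requiring a line of bookkeeping are that A-extensions and ``semitopological'' are defined for continuous \emph{surjective} homomorphisms, so one should note that $f_1$ (an isomorphism), $\widetilde f_1$, $f_2$ and hence $g$ and $f$ are all surjective, and that the first case does meet the hypotheses of Proposition~\ref{compositions..}.
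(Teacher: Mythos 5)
Your proof is correct, and the two halves deserve separate comments. The first case (``$f_1$ open, $f_2\in\mathcal S_i$'') is handled exactly as in the paper, by quoting the implication (b)$\Rightarrow$(a) of Proposition \ref{compositions..}. For the second case you take a genuinely different route from the paper: there, the author reduces without loss of generality to $f_1=1_G$ with $\tau\geq\rho$ and then verifies the two conditions of Definition \ref{A-open_def} by hand, using the internal characterization of semitopological isomorphisms; you instead pick an A-extension $(\widetilde G,\widetilde\tau)$ of $f_1$ with associated open continuous $\widetilde f_1$ and note that $f_2\circ\widetilde f_1$ is an open continuous homomorphism of $(\widetilde G,\widetilde\tau)$ onto $(H,\sigma)$ extending $f$, so the same $(\widetilde G,\widetilde\tau)$ is an A-extension of $f$. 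Every step is immediate from the definitions (openness, like continuity, is preserved under composition, and the normality of $G$ in $\widetilde G$ is untouched), and your argument never uses that $f_1$ is injective, so it applies verbatim to any $f_1\in\mathcal S$. What your approach buys is a strictly stronger conclusion: $f$ is semitopological, not merely A-open --- which is precisely the content of the paper's Question \ref{compositions....}(b). I cannot find a gap in your reasoning, so it appears you have in fact answered that question affirmatively; this discrepancy with the paper is worth flagging explicitly, but as a proof of the stated proposition your argument is complete, since semitopological implies A-open for continuous surjective homomorphisms by Theorem \ref{semitophom}.
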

\begin{proof}
If $f_1$ is open and $f_2\in\mathcal S_i$ apply Proposition \ref{compositions..}. Suppose that $f_1\in\mathcal S_i$ and $f_2$ is open.
Being $f_1$ an isomorphism, as noted in the introduction we can suppose without loss of generality that $G=K$, $f_1=1_G$ and that $\tau\geq\rho$ are group topologies on $G$. Let $U\in\VG$. There exists $W\in\mathcal V_{(G,\rho)}(e_G)$ such that $W$ is $U$-thin, i.e. there exists $U'\in\VG$ such that $x U' x\amu \subseteq U$ for all $x\in W$. Let $f(W)=f_2(W)=V\in\VH$. Let us prove that $f\amu(V)$ is $f\amu(f(U))$-thin. Let $y\in f\amu(V)=f\amu(f(W))$. There exists $x\in W$ such that $f(x)=f(y)$ Then $$f(y U' y\amu)=f(y) f(U') f(y)\amu=f(x)f(U')f(x)\amu=f(x U' x\amu)\subseteq U.$$ Hence $y U y\amu\subseteq f\amu (f(U))$.

Let $g\in G$. There exists $W_g\in\mathcal V_{(G,\rho)}(e_G)$ such that $[g,W_g]\subseteq U$. Let $f(W_g)=f_2(W_g)=V_g\in\VH$. Since $$f([g,f\amu(V_g)])=[f(g),f(W_g)]=f([g,W_g])\subseteq f(U),$$ then $[g,f\amu(V_g)]\subseteq f\amu(f(U))$.
\end{proof}

As noted also after Proposition \ref{compositions..} one can ask if the conclusion of this proposition could be that $f$ is semitopological (see Question \ref{compositions....}(b)).

\medskip
We can prove the following result about compositions of strongly A-open continuous surjective homomorphisms, which shows the left cancelability of the class of all strongly A-open continuous surjective group homomorphisms. In view of Remark \ref{iso->all_equivalent} this yields the left cancelability of the class $\mathcal S_i$.

\begin{proposition}\label{strAop-left_canc}
Let $f_1:(G,\tau)\to(K,\rho)$ and $f_2:(K,\rho)\to(H,\sigma)$ be continuous surjective group homomorphisms and $f=f_2\circ f_1$.
\begin{itemize}
	\item[(a)]If $f$ is strongly A-open, then $f_1$ is strongly A-open.
	\item[(b)]If $f_1$ is open and $f$ is strongly A-open, then $f_2$ is strongly A-open.
\end{itemize}
\end{proposition}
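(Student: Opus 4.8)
The plan is to check directly the two defining clauses of strongly A-open (Definition~\ref{strongly_A-open_def}) for $f_1$ and for $f_2$, transporting neighbourhoods along $f_2$ in case (a) and along $f_1$ in case (b). Throughout I will use three elementary bookkeeping facts about a continuous surjective homomorphism $\phi$: that $\phi(\phi\amu(X))=X$ for every subset $X$ of the codomain; that $\phi([A,B])=[\phi(A),\phi(B)]$ for all subsets $A,B$ of the domain (since $\phi$ carries the generating commutators of $[A,B]$ to the generating commutators of $[\phi(A),\phi(B)]$); and the identities $f_1\amu(f_2\amu(V))=f\amu(V)$ and $f_1(f\amu(V))=f_2\amu(V)$, the latter using surjectivity of $f_1$. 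The characterisation of $U$-thinness recorded in \S\ref{Uthin} — $M$ is $U$-thin iff there is a neighbourhood $U_1$ of the neutral element with $x U_1 x\amu\subseteq U$ for all $x\in M$ — will be the convenient form to work with.

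For (a), let $U\in\VG$. Since $f$ is strongly A-open there is $V\in\VH$ with $f\amu(V)$ being $U$-thin, and for each $g\in G$ there is $V_g\in\VH$ with $[g,f\amu(V_g)]\subseteq U$. Put $W=f_2\amu(V)$ and $W_g=f_2\amu(V_g)$; these are neighbourhoods of $e_K$ because $f_2$ is continuous. Then $f_1\amu(W)=f\amu(V)$ is $U$-thin, and $[g,f_1\amu(W_g)]=[g,f\amu(V_g)]\subseteq U$ for every $g\in G$. This is exactly what is required for $f_1$ to be strongly A-open; note that openness of $f_1$ plays no role in this direction.

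For (b), let $W\in\mathcal V_{(K,\rho)}(e_K)$ and set $U=f_1\amu(W)\in\VG$, so that $f_1(U)=W$ by surjectivity of $f_1$. Apply strong A-openness of $f$ to this $U$: there is $V\in\VH$ and $U'\in\VG$ with $xU'x\amu\subseteq U$ for every $x\in f\amu(V)$. I claim $W'=f_1(U')$ witnesses that $f_2\amu(V)$ is $W$-thin; here $W'\in\mathcal V_{(K,\rho)}(e_K)$ precisely because $f_1$ is open. Indeed any $y\in f_2\amu(V)=f_1(f\amu(V))$ has the form $y=f_1(x)$ with $x\in f\amu(V)$, whence $yW'y\amu=f_1(xU'x\amu)\subseteq f_1(U)=W$. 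For the commutator clause, given $k\in K$ choose $g\in G$ with $f_1(g)=k$; strong A-openness of $f$ yields $V_g\in\VH$ with $[g,f\amu(V_g)]\subseteq U$, and applying $f_1$ gives $[k,f_2\amu(V_g)]=f_1([g,f\amu(V_g)])\subseteq f_1(U)=W$. Thus $f_2$ is strongly A-open.

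The only genuinely delicate point is the appeal to openness of $f_1$ in (b): it is exactly what guarantees that the images $f_1(U')$ and $f_1(U_g')$ of the ``thinness witness'' and the ``commutator witness'' for $f$ are again neighbourhoods of $e_K$ (surjectivity alone would not suffice), while surjectivity is used separately to identify $f_1(f\amu(V))$ with $f_2\amu(V)$ and to obtain $f_1(f_1\amu(W))=W$. Everything else is the routine manipulation of $f_1\amu$, $f_2\amu$ and $(f_2\circ f_1)\amu$ summarised in the first paragraph.
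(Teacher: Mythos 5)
Your proof is correct and follows essentially the same route as the paper's: in (a) you pull the witnesses back along $f_2$ using $f_1\amu(f_2\amu(V))=f\amu(V)$, and in (b) you push the witnesses forward along the open map $f_1$ using $f_1(f\amu(V))=f_2\amu(V)$, exactly as in the paper. (The only quibble is the closing remark about a ``commutator witness'' $f_1(U_g')$ --- no such neighbourhood is needed, and openness of $f_1$ is in fact used only for the thinness clause --- but this does not affect the argument.)
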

\begin{proof}
(a) Suppose that $f$ is strongly A-open. Let $U\in\VG$. There exists $V\in\VH$ such that $f\amu(V)$ is $U$-thin, i.e. there exists $U'\in\VG$ such that $x U' x\amu\subseteq U$ for all $x\in f\amu(V)$. Let $W=f_2\amu(V)\in\mathcal V_{(K,\rho)}(e_K)$. Then $f_1\amu(W)$ is $U$-thin. In fact $f_1\amu(W)=f\amu(V)$ and so $x U' x\amu\subseteq U$ for all $x\in f_1\amu(W)$.

Let $g\in G$. There exists $V_g\in\VH$ such that $[g,f\amu(V_g)]\subseteq U$. Let $W_g=f_2\amu(V_g)\in\mathcal V_{(K,\rho)}(e_K)$. Since $f_1\amu(W_g)=f\amu(V_g)$, so $[g,f_1\amu(W_g)]\subseteq U$. This completes the proof that $f_1$ is strongly A-open.

\smallskip
(b) Assume that $f_1$ is open and $f$ is strongly A-open. Let $W\in\mathcal V_{(K,\rho)}(e_K)$. There exists $U\in\VG$ such that $f_1(U)= W$. There exists $V\in\VH$ such that $f\amu(V)$ is $U$-thin, i.e. there exists $U'\in\VG$ such that $x U' x\amu\subseteq U$ for every $x\in f\amu(V)$. Then $f_2\amu(V)$ is $W$-thin. Indeed for $y\in f_2\amu(V)$ there is $x\in f\amu(V)=f_1\amu(f_2\amu(V))$ such that $f_1(x)=y$ and so $$y f_1(U') y\amu= f_1(x U' x\amu)\subseteq f_1(U)=W,$$ where $f_1(U')\in\mathcal V_{(K,\rho)}(e_K)$ since $f_1$ is open.

Let $k\in K$. There exists $g\in G$ such that $f_1(g)=k$. There exists $V_g\in\VH$ such that $[g,f\amu(V_g)]\subseteq U$. Then $$[k,f_2\amu(V_g)]=f_1([g,f\amu(V_g)])\subseteq f_1(U)=W.$$
This completes the proof that $f_2$ is strongly A-open.
\end{proof}

\begin{theorem}\label{left-canc}
Let $f_1:(G,\tau)\to(K,\rho)$ and $f_2:(K,\rho)\to(H,\sigma)$ be continuous group isomorphisms and $f=f_2\circ f_1$. If $f$ is semitopological, then $f_1$ is semitopological.
\end{theorem}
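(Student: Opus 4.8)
The plan is to deduce this immediately from the left cancelability of the class of strongly A-open continuous surjective homomorphisms (Proposition \ref{strAop-left_canc}(a)), using the fact that for continuous group isomorphisms all five properties in the diagram (\ref{first_diagram}) collapse to a single one (Remark \ref{iso->all_equivalent}).

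First I would observe that $f=f_2\circ f_1$ is itself a continuous group isomorphism, being a composition of continuous group isomorphisms. Hence, by Remark \ref{iso->all_equivalent}, the hypothesis that $f$ is semitopological is equivalent to $f$ being strongly A-open. Next I would apply Proposition \ref{strAop-left_canc}(a) to the composition $f=f_2\circ f_1$: since $f$ is strongly A-open, $f_1$ is strongly A-open. Finally, since $f_1$ is also a continuous group isomorphism, Remark \ref{iso->all_equivalent} applies once more and tells us that $f_1$ being strongly A-open is equivalent to $f_1$ being semitopological. This completes the argument.

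There is no real obstacle here: all the substance has already been carried out in Proposition \ref{strAop-left_canc}, whose proof handles the genuinely technical commutator- and thin-set bookkeeping; the present statement is a clean corollary of that proposition together with the equivalence of notions for isomorphisms. The only point worth stating explicitly in the write-up is that the class $\mathcal{S}_i$ inherits left cancelability precisely because, restricted to continuous isomorphisms, it coincides with the class of strongly A-open maps, so that Proposition \ref{strAop-left_canc}(a) transfers verbatim.
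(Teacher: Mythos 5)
Your argument is correct and is exactly the one the paper intends: the text preceding Proposition \ref{strAop-left_canc} announces that, in view of Remark \ref{iso->all_equivalent}, the left cancelability of strongly A-open homomorphisms yields the left cancelability of $\mathcal S_i$, which is precisely your chain semitopological $\Leftrightarrow$ strongly A-open (for isomorphisms), then Proposition \ref{strAop-left_canc}(a), then the equivalence again for $f_1$. Nothing is missing.
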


\begin{remark}
In the previous theorem we have proved the left cancelability of the class $\mathcal S_i$. What about the right cancelability? It holds in some particular case, but we conjecture that it could not hold in general in $\mathcal S_i$.

Let $f_1:(G,\tau)\to(K,\rho)$ and $f_2:(K,\rho)\to(H,\sigma)$ be continuous group isomorphisms and suppose that $f=f_2\circ f_1$ is semitopological. Then for every $W\in\mathcal V_{(K,\rho)}(e_K)$ and for every $k\in K$ there exists $V_k\in\VH$ such that $[k,V_k]\subseteq W$. But in the general case it turns out to be not so simple to verify the other condition for $f_2$ to be semitopological. By Lemma \ref{SINsemitop} this condition is automatically verified in case $(K,\rho)$ is SIN. Analogously it is verified in case $\sigma=\iota_H$ by Corollary \ref{indiso}. It could be more simple eventually to look for a counterexample in case $\tau=\delta_G$.
\end{remark}

\subsection{Stability under taking quotients and products}

The next proposition implies Theorem \ref{quotientst}(b) which states that $\mathcal S_i$ is closed under taking quotients.

\begin{proposition}\label{stabquozhom}
Let $(G,\tau),(H,\sigma)$ be topological groups, $N$ a normal subgroup of $H$ and $q:G\to G/f^{-1}(N)$, $q':H\to H/N$ the canonical projections. If $f:(G,\tau)\to(H,\sigma)$ is an A-open continuous surjective homomorphism, then $f':(G/f^{-1}(N),\tau_q)\to(H/N,\sigma_q)$, defined by $f'(q(g))=q'(f(g))$ for every $g\in G$, is a semitopological isomorphism.
\end{proposition}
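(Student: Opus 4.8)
The plan is to prove that $f'$ is an A-open continuous isomorphism; since every A-open continuous isomorphism is semitopological by Theorem~\ref{semitop} (see also Remark~\ref{semitop+iso=A-open}), this suffices.

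First I would record the routine facts. The map $f'$ is well defined and injective because $g\in f^{-1}(N)$ iff $f(g)\in N$, it is surjective because $f$ is, it is a homomorphism, and it is continuous because $f'\circ q=q'\circ f$ is continuous and $\tau_q$ is the quotient topology; hence $f'$ is a continuous isomorphism. Since $N\triangleleft H$ and $f$ is a homomorphism, $f^{-1}(N)$ is normal in $G$, so $(G/f^{-1}(N),\tau_q)$ makes sense. As $q$ and $q'$ are open continuous surjections, the neighborhood filter of the identity of $(G/f^{-1}(N),\tau_q)$ is exactly $\{q(U):U\in\VG\}$, and similarly for $q'$. Finally, since $f'$ is injective, $(f')^{-1}(f'(S))=S$ for every $S\subseteq G/f^{-1}(N)$; in particular, in checking A-openness of $f'$ at the neighborhood $q(U)$, the thinness target $(f')^{-1}(f'(q(U)))$ in Definition~\ref{A-open_def} collapses to $q(U)$.

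The computational heart is the identity $(f')^{-1}(q'(W))=q(f^{-1}(W))$ for every $W\in\VH$: this follows from $q'\circ f=f'\circ q$, the injectivity of $f'$, the surjectivity of $f$ (which gives $f^{-1}(WN)=f^{-1}(W)f^{-1}(N)$), and $\ker q=f^{-1}(N)$. Now fix $U\in\VG$. For condition (a) of Definition~\ref{A-open_def}, use that $f$ is A-open to obtain $W\in\VH$ with $f^{-1}(W)$ being $f^{-1}(f(U))$-thin, and set $V=q'(W)$. Then $(f')^{-1}(V)=q(f^{-1}(W))$, which by Lemma~\ref{thin_quotient}(a) is $q(f^{-1}(f(U)))$-thin; and $q(f^{-1}(f(U)))=q(U\ker f)=q(U)$, using $\ker f\subseteq f^{-1}(N)$ and surjectivity of $f$. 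For condition (b), fix $g\in G$; A-openness of $f$ gives $W_g\in\VH$ with $[g,f^{-1}(W_g)]\subseteq f^{-1}(f(U))$, and setting $V_{q(g)}=q'(W_g)$ we get $[q(g),(f')^{-1}(V_{q(g)})]=[q(g),q(f^{-1}(W_g))]=q([g,f^{-1}(W_g)])\subseteq q(f^{-1}(f(U)))=q(U)$, where the middle equality holds because $q$ is a homomorphism, so $q([x,M])=[q(x),q(M)]$ for any $x\in G$ and $M\subseteq G$.

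I do not expect a genuine obstacle here: the whole argument is set-theoretic bookkeeping with images and preimages of subgroups, the key points being the identity $(f')^{-1}(q'(W))=q(f^{-1}(W))$ and the collapse of the thinness target $f^{-1}(f(U))$ to $q(U)$ via $\ker f\subseteq f^{-1}(N)$. The construction runs parallel to — and is slightly simpler than — the proof of Proposition~\ref{H-reflection}(a),(c).
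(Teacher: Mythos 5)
Your proposal is correct and follows essentially the same route as the paper: both hinge on the identity $(f')^{-1}(q'(W))=q(f^{-1}(W))$, push the A-openness witnesses for $f$ through $q$, and use $\ker f\subseteq f^{-1}(N)=\ker q$ to collapse $q(f^{-1}(f(U)))$ to $q(U)$. The only cosmetic difference is that you invoke Lemma~\ref{thin_quotient}(a) where the paper verifies the thinness transfer by a direct one-line computation.
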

\begin{proof}
Note that $q(f\amu(S))=(f')\amu(q'(S))$ whenever $S$ is a subset of $H$. Indeed let $x\in q(f\amu(S))$. Then there exists $w\in f\amu(S)$ such that $q(w)=x$ and $f(w)=s\in S$. So $f'(x)=f'(q(w))=q'(f(w))=q'(s)$ and hence $x\in (f')\amu(q'(S))$. Now let $x\in (f')\amu(q'(S))$. Then there exists $s\in S$ such that $f'(x)=q'(s)$ and there exists $w\in f\amu(S)$ such that $f(w)=s$ and $f'(x)=q'(s)=q'(f(w))=f'(q(w))$. Consequently $x=q(w)\in q(f\amu(S))$.

Let $U\in\VG$. There exists $V\in \VH$ such that $f\amu(V)$ is $f\amu(f(U))$-thin. This means that there exists $U'\in\VG$ such that $x U' x\amu\subseteq f\amu(f(U))$ for every $x\in f\amu(V)$. Therefore $q(x)q(U')q(x)\amu\subseteq q(U)$ for every $q(x)\in q(f\amu(V))=(f')\amu(q'(V))$. Then $(f')\amu(q'(V))$ is $q(U)$-thin. Let $g\in G$. There exists $V_g\in\VH$ such that $[g,f\amu(V_g)]\subseteq f\amu(f(U))$. Hence $q([g,f\amu(V_g)])=[q(g),q(f\amu(V_g))]=[q(g),(f')\amu(q'(V_g))]\subseteq q(U)$.
\end{proof}

Note that in this proposition we have supposed that $f$ is A-open, which is the weakest of the properties we consider, while the conclusion being for isomorphisms has the strongest property.

\medskip
As a consequence of the results about composition and Proposition \ref{stabquozhom} we can prove the following more general theorem. It shows that the class of all A-open continuous surjective homomorphisms is stable under pushout with respect to open continuous surjective homomorphisms.

\begin{theorem}\label{pushout}
Let $(G,\tau),(H,\sigma)$ be topological groups, $L$ a normal subgroup of $G$ and $q:G\to G/L$, $q':H\to H/f(L)$ the canonical projections. If $f:(G,\tau)\to(H,\sigma)$ is an A-open continuous surjective homomorphism, then $f'':(G/L,\tau_q)\to(H/f(L),\sigma_q)$, defined by $f''(q(g))=q'(f(g))$ for every $g\in G$ is A-open.
\end{theorem}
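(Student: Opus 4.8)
The plan is to factor $f''$ as an open continuous surjective homomorphism followed by a semitopological isomorphism, and then quote Proposition~\ref{compositions..}. First I would record the elementary facts: since $f$ is surjective and $L\triangleleft G$, the image $f(L)$ is normal in $H$, so $H/f(L)$ is a topological group; moreover $f''$ is well defined, because $q(g_1)=q(g_2)$ forces $g_1g_2^{-1}\in L$, hence $f(g_1)f(g_2)^{-1}\in f(L)$ and $q'(f(g_1))=q'(f(g_2))$, and $f''$ is surjective since $f$ and $q'$ are.

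Next I would apply Proposition~\ref{stabquozhom} with $N=f(L)$. Here the crucial set identity is $f^{-1}(N)=f^{-1}(f(L))=L\cdot\ker f$, which in particular contains $L$. Writing $r:G\to G/f^{-1}(N)$ for the canonical projection, Proposition~\ref{stabquozhom} (whose hypothesis ``$f$ A-open'' is exactly what we are given) yields that the map $\bar f:(G/f^{-1}(N),\tau_r)\to(H/f(L),\sigma_{q'})$ defined by $\bar f(r(g))=q'(f(g))$ is a \emph{semitopological isomorphism}. Since $L\subseteq f^{-1}(N)$, there is an induced canonical projection $p:(G/L,\tau_q)\to(G/f^{-1}(N),\tau_r)$ with $p\circ q=r$, and by transitivity of quotient topologies $\tau_r$ is precisely the quotient topology induced on $G/f^{-1}(N)$ by $p$; in particular $p$ is an open continuous surjective homomorphism. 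Finally, for every $g\in G$ we have $\bar f(p(q(g)))=\bar f(r(g))=q'(f(g))=f''(q(g))$, so $f''=\bar f\circ p$.

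It then remains to invoke Proposition~\ref{compositions..} with $f_1=p$ (open continuous surjective) and $f_2=\bar f$ (a continuous isomorphism which is semitopological): the implication (b)$\Rightarrow$(a) gives at once that $f''=\bar f\circ p$ is A-open, which is the assertion. I do not expect a genuine obstacle here: the only points needing a little care are the identity $f^{-1}(f(L))=L\cdot\ker f$ and the transitivity of quotient topologies that makes $p$ open and identifies the intermediate topology with the one produced by Proposition~\ref{stabquozhom}; both are routine. The actual content of the argument is the factorization of $f''$ through the semitopological isomorphism supplied by Proposition~\ref{stabquozhom}, together with the composition result Proposition~\ref{compositions..}.
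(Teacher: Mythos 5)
Your proposal is correct and follows essentially the same route as the paper: both factor $f''$ through $G/f^{-1}(f(L))$, apply Proposition~\ref{stabquozhom} to get a semitopological isomorphism there, and then conclude via the composition result (the paper quotes Corollary~\ref{Aop=Aop=semitop}, which is just Proposition~\ref{compositions..} specialized to a canonical quotient projection, exactly the $p$ you use). The auxiliary points you flag (openness of $p$, transitivity of quotient topologies) are indeed routine and the argument goes through as written.
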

\begin{proof}
Consider the following commutative diagram, where $\varphi$ is the canonical projection, $f'$ is defined by $f'(\varphi(q(g)))=q'(f(g))$ for every $g\in G$ and quotients are endowed with the quotient topologies:
\begin{equation*}
\xymatrix{
G \ar[rr]^{f} \ar[d]^{q} & & H \ar[d]^{q'}\\
G/L \ar[rr]^{f''} \ar[dr]^{\varphi} & & H/f(L)\\
& G/f\amu(f(L))\ar[ur]^{f'} &
}
\end{equation*}
By Proposition \ref{stabquozhom} $f'$ is a semitopological isomorphism. Then $f''$ is A-open by Corollary \ref{Aop=Aop=semitop}.
\end{proof}

The next theorem generalizes Theorem \ref{productst} about the stability of the class $\mathcal S_i$ under taking products.

\begin{theorem}\label{productst_hom}
Let $\{(G_i,\tau_i):i\in I\}$ and $\{(H_i,\sigma_i):i\in I\}$ be families of topological groups and $f_i:(G_i,\tau_i)\to(H_i,\sigma_i)$ a continuous surjective homomorphism for every $i\in I$. Let $G=\prod_{i\in I}G_i$, $\tau=\prod_{i\in I}\tau_i$, $H=\prod_{i\in I}H_i$, $\sigma=\prod_{i\in I}\sigma_i$ and $f=\prod_{i\in I}f_i$.
\begin{itemize}
\item[(a)]If $f_i:(G_i,\tau_i)\to(H_i,\sigma_i)$ is semitopological for every $i\in I$, then $f:(G,\tau)\to(H,\sigma)$ is semitopological.
\item[(b)]$f_i:(G_i,\tau_i)\to(H_i,\sigma_i)$ is A-open for every $i\in I$ if and only if $f:(G,\tau)\to(H,\sigma)$ is A-open.
\item[(c)]$f_i:(G_i,\tau_i)\to(H_i,\sigma_i)$ is A$^*$-open for every $i\in I$ if and only if $f:(G,\tau)\to(H,\sigma)$ is A$^*$-open.
\item[(d)]$f_i:(G_i,\tau_i)\to(H_i,\sigma_i)$ is strongly A$^*$-open for every $i\in I$ if and only if $f:(G,\tau)\to(H,\sigma)$ is strongly A$^*$-open.
\item[(e)]$f_i:(G_i,\tau_i)\to(H_i,\sigma_i)$ is strongly A-open for every $i\in I$ if and only if $f:(G,\tau)\to(H,\sigma)$ is strongly A-open.
\end{itemize}
\end{theorem}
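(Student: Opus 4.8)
The plan is to settle (a) by a direct construction and (b)--(e) by reducing the defining conditions to coordinatewise statements. For (a), assume each $f_i$ is semitopological, choose for each $i$ an A-extension $(\widetilde G_i,\widetilde\tau_i)$ with associated open continuous surjective homomorphism $\widetilde f_i:\widetilde G_i\to H_i$ extending $f_i$, and set $\widetilde G=\prod_i\widetilde G_i$ with the product topology $\widetilde\tau$ and $\widetilde f=\prod_i\widetilde f_i$. Then $G=\prod_i G_i$ is a normal subgroup of $\widetilde G$, the topology $\tau$ is exactly the one inherited from $\widetilde\tau$, $\widetilde f$ is continuous and surjective, $\widetilde f\restriction_G=f$, and $\widetilde f$ is open because a basic neighborhood $\prod_i W_i$ of $e_{\widetilde G}$ (with $W_i=\widetilde G_i$ off a finite set) is sent to $\prod_i\widetilde f_i(W_i)$, which is a basic neighborhood of $e_H$ since each $\widetilde f_i$ is open and $\widetilde f_i(\widetilde G_i)=H_i$. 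Hence $(\widetilde G,\widetilde\tau)$ with $\widetilde f$ is an A-extension of $f$ and $f$ is semitopological.

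For the direction ``each $f_i$ has the property $\Rightarrow$ $f$ has it'' in (b)--(e), I would verify Definitions \ref{A-open_def}, \ref{A*-open_def} and \ref{strongly_A-open_def} coordinatewise. Given $U\in\mathcal V_{(G,\tau)}(e_G)$ one may take $U=\prod_i U_i$ with $U_i\in\mathcal V_{(G_i,\tau_i)}(e_{G_i})$ and $U_i=G_i$ off a finite set $F$; for $i\in F$ apply the hypothesis on $f_i$, and for $i\notin F$ take the witness neighborhoods in $H_i$ to be $H_i$ and the thin-witness in $G_i$ to be $G_i$. Assembling these into products $V=\prod_i V_i$, $V_g=\prod_i V_{g,i}$ and $U'=\prod_i U_i'$ yields genuine neighborhoods, and the identities $f^{-1}(V)=\prod_i f_i^{-1}(V_i)$, $f^{-1}(f(U))=\prod_i f_i^{-1}(f_i(U_i))$, $xU'x^{-1}=\prod_i x_iU_i'x_i^{-1}$, together with $p_i([g,S])\subseteq[g_i,p_i(S)]$, show that the thinness and commutator conditions pass from the factors to $f$. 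For (c) and (d) one uses instead the section $s=\prod_i s_i$ assembled from sections $s_i$ witnessing A$^*$-openness (resp. strong A$^*$-openness) of the $f_i$; for (d) one checks in addition that $s$ is a quasihomomorphism, since $s'=\prod_i s_i'$ and $s_y=\prod_i(s_i)_{y_i}$, so continuity of $s'$ at $(e_H,e_H)$ and of $s_y$ at $e_H$ follows coordinatewise from the corresponding facts for the $s_i$ (a map into a product is continuous at a point iff all its coordinate functions are).

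For the converse direction ``$f$ has the property $\Rightarrow$ each $f_i$ has it'', let $\jmath_i:G_i\to G$, $\iota_i:H_i\to H$ be the slice embeddings and $p_i:G\to G_i$, $\pi_i:H\to H_i$ the projections (so $\jmath_i,\iota_i$ are homomorphisms and $\pi_i\circ f=f_i\circ p_i$). For (e), Theorem \ref{subgroups_stability}(d) applied to the subgroup $\jmath_i(G_i)$ gives that $f\restriction_{\jmath_i(G_i)}$ is strongly A-open, and this restriction is $f_i$ up to the topological isomorphisms $\jmath_i(G_i)\cong G_i$ and $f(\jmath_i(G_i))=\iota_i(H_i)\cong H_i$. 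For (b), Theorem \ref{pushout} applied with $L=\ker p_i\triangleleft G$ gives that the induced quotient map is A-open; since $p_i$ and $\pi_i$ are open (hence quotient maps), $f(L)=\ker\pi_i$, $(G/L,\tau_q)\cong(G_i,\tau_i)$ and $(H/f(L),\sigma_q)\cong(H_i,\sigma_i)$, and this induced map is $f_i$. For (c) and (d) I would build a section of $f_i$ directly as $s_i=p_i\circ s\circ\iota_i$ (so $f_i\circ s_i=\pi_i\circ\iota_i=\mathrm{id}_{H_i}$); applying the property of $f$ to the neighborhood $U_i\times\prod_{j\neq i}G_j$ and projecting to coordinate $i$ shows $s_i$ witnesses A$^*$-openness of $f_i$, and, $\iota_i$ being a homomorphism, the identities $s_i'=p_i\circ s'\circ(\iota_i\times\iota_i)$ and $(s_i)_y=p_i\circ s_{\iota_i(y)}\circ\iota_i$ show $s_i$ is again a quasihomomorphism.

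The main difficulty is the bookkeeping for infinite index sets in the ``each $\Rightarrow$ product'' directions: one must consistently reduce to basic neighborhoods that are cofinitely the whole group, choose trivial witnesses on the remaining coordinates so the assembled products stay neighborhoods, and keep track of the mismatch between the $f^{-1}(f(U))$ in the A-open conditions and the $U$ in the strongly A-open conditions (as well as the convention that $[g,-]$ denotes a generated subgroup, which is what forces the use of $p_i([g,S])\subseteq[g_i,p_i(S)]$). The step I expect to require the most care is the reconstruction of a section of $f_i$ out of a section of $f$ in (c) and (d): verifying that $s_i=p_i\circ s\circ\iota_i$ simultaneously witnesses A$^*$-openness and, in (d), remains a quasihomomorphism, since there is no earlier pushout or restriction stability result available for A$^*$-openness and the argument genuinely relies on the slice maps $\iota_i$ being homomorphisms.
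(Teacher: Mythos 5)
Your proposal is correct and follows essentially the same route as the paper: product of A-extensions for (a), coordinatewise verification on basic neighborhoods for the forward directions of (b)--(e), and Theorem \ref{pushout}, Theorem \ref{subgroups_stability}(d) and restricted sections for the converses. Your choice $s_i=p_i\circ s\circ\iota_i$ is just a more explicit rendering of the paper's $s\restriction_{H_i}$ (whose values need not lie in the slice $G_i$, so the composition with $p_i$ is indeed the right formalization), but the argument is the same.
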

\begin{proof}
(a) For every $i\in I$ let $(\widetilde G_i,\widetilde\tau_i)$, with $\widetilde f_i:(\widetilde G_i,\widetilde\tau_i)\to(H_i,\sigma_i)$, be an A-extension of $f_i:(G_i\tau_i)\to(H_i,\sigma_i)$. Then $(\widetilde G,\widetilde\tau)=(\prod_{i\in I}\widetilde G_i,\prod_{i\in I}\widetilde\tau_i)$ with $\widetilde f=\prod_{i\in I}\widetilde f_i:(\widetilde G,\widetilde\tau)\to(H,\sigma)$ is an A-extension of $f:(G,\tau)\to(H,\sigma)$. In fact $G$ is a normal subgroup of $\widetilde G$, $\widetilde f$ is open and continuous and $\widetilde f\restriction_G=f$.

\smallskip
(b) Suppose that $f_i:(G_i,\tau_i)\to(H_i,\sigma_i)$ is A-open for every $i\in I$. Let $U=U_{i_1}\times\dots\times U_{i_n}\times\prod_{i\in I\setminus\{i_1,\dots,i_n\}}G_i\in\VG$. For every $j\in\{i_1,\dots,i_n\}$ there exists $V_j\in\mathcal V_{(H_j,\sigma_j)}(e_{H_j})$ such that $f_j\amu(V_j)$ is $f_j\amu(f_j(U_j))$-thin. Then $V=V_{i_1}\times\dots\times V_{i_n}\times\prod_{i\in I\setminus\{i_1,\dots,i_n\}}H_i\in\VH$ is such that $f\amu(V)$ is $f\amu(f(U))$-thin. Let $g=(g_i)_{i\in I}\in G$. For every $j\in\{i_1,\dots,i_n\}$ there exists $V_{g_j}\in\mathcal V_{(H_j,\sigma_j)}(e_{H_j})$ such that $[g_j,f_j\amu(V_{g_j})]\subseteq f_j\amu(f_j(U_j))$. Then $V_g=V_{g_{i_1}}\times\dots\times V_{g_{i_n}}\times\prod_{i\in I\setminus\{i_1,\dots,i_n\}}H_i\in\VH$ is such that $[g,f\amu(V_g)]\subseteq f\amu(f(U))$. So $f:(G,\tau)\to(H,\sigma)$ is A-open.

Assume that $f:(G,\tau)\to(H,\sigma)$ is A-open.
Then $f_i$ is A-open for every $i\in I$ in view of Theorem \ref{pushout}.

\smallskip
(c) Suppose that $f_i:(G_i,\tau_i)\to(H_i,\sigma_i)$ is A$^*$-open for every $i\in I$. Let $U=U_{i_1}\times\dots\times U_{i_n}\times\prod_{i\in I\setminus\{i_1,\dots,i_n\}}G_i\in\VG$ and for every $i\in I$ let $s_i:H_i\to G_i$ be a section of $f_i$; then $s=\prod_{i\in I}s_i$ is a section of $f$. For every $j\in\{i_1,\dots,i_n\}$ there exists $V_j\in\mathcal V_{(H_j,\sigma_j)}(e_{H_j})$ such that $s_j(V_j)$ is $U_j$-thin. Then $V=V_{i_1}\times\dots\times V_{i_n}\times\prod_{i\in I\setminus\{i_1,\dots,i_n\}}H_i\in\VH$ is such that $s(V)$ is $U$-thin. Let $g=(g_i)_{i\in I}\in G$. For every $j\in\{i_1,\dots,i_n\}$ there exists $V_{g_j}\in\mathcal V_{(H_j,\sigma_j)}(e_{H_j})$ such that $[g_j,s_j(V_{g_j})]\subseteq U_j$. Then $V_g=V_{g_{i_1}}\times\dots\times V_{g_{i_n}}\times\prod_{i\in I\setminus\{i_1,\dots,i_n\}}H_i\in\VH$ is such that $[g,s(V_g)]\subseteq U$. Then $f:(G,\tau)\to(H,\sigma)$ is A$^*$-open.

Assume that $f:(G,\tau)\to(H,\sigma)$ is A$^*$-open and let $s:H\to G$ be the section of $f$ that witnesses that $f$ is A$^*$-open. Let $i\in I$ and $U_i\in\mathcal V_{(G_i,\tau_i)}(e_{G_i})$. Note that $s_i=s\restriction_{H_i}$ is a section of $f_i$. Then $U=U_i\times\prod_{j\in I\setminus\{i\}}G_j\in \VG$. There exists $V=\prod_{j\in I}V_j\in\VH$, where $V_j\in\mathcal V_{(H_j,\sigma_j)}(e_{H_j})$ and $V_j=H_j$ for all but finitely many $j\in I$, such that $s(V)$ is $U$-thin. This means that there exists $U'=\prod_{i\in I}U'_i\in \VG$ such that $s(x)U's(x)\amu\subseteq U$ for all $x\in V$. In particular $s(x_i)U'_i s(x_i)\amu\subseteq U\cap G_i=U_i$ for all $x_i\in V_i$ and so $s_i(V_i)$ is $U_i$-thin. Let $g_i\in G_i$ and $g=(g_j)_{j\in I}$ such that $g_j=e_{G_j}$ for each $j\neq i$. There exists $V_{g}=\prod_{j\in I}(V_g)_i\in\VH$, where $(V_g)_j\in\mathcal V_{(H_j,\sigma_j)}(e_{H_j})$ and $(V_g)_j=H_j$ for all but finitely many $j\in I$, such that $[g,s(V_g)]\subseteq U$. Since $s_i((V_g)_i)\subseteq s(V_g)$ and $[g_i,s_i((V_g)_i)]\subseteq G_i$, therefore $[g_i,s_i((V_g)_i)]\subseteq U_i$. This proves that $f_i$ is A$^*$-open.

\smallskip
(d) Suppose that $f_i:(G_i,\tau_i)\to(H_i,\sigma_i)$ is strongly A$^*$-open for every $i\in I$. Then for every $i\in I$ there exists a section $s_i:H_i\to G_i$ that witnesses that $f_i$ is strongly A$^*$-open. As proved in (c), $s=\prod_{i\in I}s_i$ is a section of $f$ that witnesses that $f$ is A$^*$-open. It remains to prove that $s$ is a quasihomomorphism. This is true since $s'=\prod_{i\in I}s_i'$ and for $y=(y_i)\in H$ $s_y=\prod_{i\in I}s_{y_i}$.

Assume that $f:(G,\tau)\to(H,\sigma)$ is strongly A$^*$-open. There exists a section $s$ of $f$ that witnesses that $f$ is strongly A$^*$-open. Let $i\in I$ and $s_i=s\restriction H_i:H_i\to G_i$. As proved in (c) $s_i$ is a section of $f_i$ that witnesses that $f_i$ is A$^*$-open. It remains to prove that $s_i:(H_i,\sigma_i)\to(G_i,\tau_i)$ is a quasihomomorphism. Let $U_{i_0}\in\mathcal V_{(G_{i_0},\tau_{i_0})}(e_{G_{i_0}})$. Then $U=U_{i_0}\times\prod_{i\in I\setminus \{i_0\}}\in\VG$. There exists $V=\prod_{i\in I}V_i\in\VH$ such that $s'(V,V)\subseteq U$. In particular $s_{i_0}'(V_{i_0})\subseteq U_{i_0}$. So $s'_{i_0}$ is continuous at $(e_{H_{i_0}},e_{H_{i_0}})$. In the same way it can be proved that $(s_i)_y$ is continuous at $e_{H_{i_0}}$.

\smallskip
(e) Suppose that $f_i:(G_i,\tau_i)\to(H_i,\sigma_i)$ is strongly A-open for every $i\in I$. Let $U=U_{i_1}\times\dots\times U_{i_n}\times\prod_{i\in I\setminus\{i_1,\dots,i_n\}}G_i\in\VG$. For every $j\in\{i_1,\dots,i_n\}$ there exists $V_j\in\mathcal V_{(H_j,\sigma_j)}(e_{H_j})$ such that $f_j\amu(V_j)$ is $U_j$-thin. Then $V=V_{i_1}\times\dots\times V_{i_n}\times\prod_{i\in I\setminus\{i_1,\dots,i_n\}}H_i\in\VH$ is such that $f\amu(V)$ is $U$-thin. Let $g=(g_i)_{i\in I}\in G$. For every $j\in\{i_1,\dots,i_n\}$ there exists $V_{g_j}\in\mathcal V_{(H_j,\sigma_j)}(e_{H_j})$ such that $[g_j,f_j\amu(V_{g_j})]\subseteq U_j$. Then $V_g=V_{g_{i_1}}\times\dots\times V_{g_{i_n}}\times\prod_{i\in I\setminus\{i_1,\dots,i_n\}}G_i\in\VH$ is such that $[g,f\amu(V_g)]\subseteq U$.

Assume that $f:(G,\tau)\to(H,\sigma)$ is strongly A-open.
Then $f_i$ is strongly A-open for every $i\in I$ by Theorem \ref{subgroups_stability}(d), since $f_i=f\restriction_{G_i}$ for every $i\in I$.
\end{proof}

In (a) of this theorem it is not clear if the converse implication holds in general. In view of (b) and since semitopological coincides with A-open for isomorphisms, it holds true for isomorphisms (so also the converse implication of Theorem \ref{productst} holds). Moreover it holds true in the particular case of the following example.

\begin{example}
Let $G_1,G_2,H_1,H_2$ be topological groups and $f_i:G_i\to H_i$ a continuous surjective homomorphism for $i=1,2$. Moreover $G=G_1\times G_2$, $H=H_1\times H_2$ and $f=f_1\times f_2$. If $f$ is semitopological and $G_2$ is a normal subgroup of the A-extension $\widetilde G$, then $f_1$ is semitopological. In fact in this case $G_1$ is topologically isomorphic to a normal subgroup of $\widetilde G/G_2$. Moreover let $\widetilde f_1:\widetilde G/G_2\to H_1$ be the homomorphism induced by the open continuous homomorphism $\widetilde f$ associated to $\widetilde G$. Hence $\widetilde f_1$ is open.

If $Z(G_1)$ is trivial and $G_2$ is abelian, then $G_2=Z(G)$ and in particular $G_2$ is normal in $\widetilde G$.
\end{example}

\section{Open questions}\label{questions}

We sum up in the following graph the relations that we know and do not know among the properties we have introduced and the already defined ones (for continuous surjective homomorphisms of topological groups). We give here more properties that those in Diagram \ref{small_diagram} on page \pageref{small_diagram} to give a complete description. The examples used to prove that some implications do not hold are Examples  \ref{open,A*-open,non-strongly-A-open}, \ref{non-open,str_A-open}, \ref{strAop-nonquasihomo}, \ref{non-open_*-section} and \ref{Aop-<A*op}.

\begin{equation*}
\xymatrix{
\text{semitopological+strongly A-open} \ar[dd]^{no} & \text{strongly A$^*$-open} \ar@{=>}[lddd] \ar@{=>}[d] \ar@{=>}[rddd] \ar[rdd]^{no} \ar[ldd]_{no} & \text{open+(strongly) A$^*$-open} \ar[dd]^{no} \\
& \text{$\exists$ section quasih.} \ar@/^0.7pc/[ddd]^{no} & \\
\text{open} \ar@{=>}[d] \ar[rr]^{no} \ar@/^0.3pc/@{-->}[ru]^{?}  \ar@/_0.8pc/@{-->}[rrd]^? &  & \text{strongly A-open} \ar@{=>}[d] \ar@/_0.3pc/[lu]_{no} \ar@/_1pc/[ll]_{no} \ar@/_0.7pc/@{-->}[lld]_? \\
\text{semitopological} \ar@/^0.1pc/[uur]^{no} \ar@{=>}[dr] \ar@/^0.7pc/[u]^{no} \ar@{-->}@/_0.7pc/[rr]_? \ar[rru]^{no} & & \text{A$^*$-open} \ar@{=>}[dl] \ar@/_0.7pc/[u]_{no} \ar@{-->}[ll]_? \\
& \text{A-open} \ar@{-->}@/^1pc/[lu]^? \ar@/_1pc/[ru]_{no} &
}
\end{equation*}

We collect here all questions left open inside this paper.
Note that only one of the distinct conditions A-open, A$^*$-open and strongly A-open could eventually imply semitopological. And only A-open and A$^*$-open could eventually be equivalent to semitopological.

\begin{question}
Does either A-open or A$^*$-open or strongly A-open imply semitopological for a continuous surjective homomorphism of topological groups? Is either A-open or A$^*$-open equivalent to semitopological? Is this true in case the domain is discrete or the codomain indiscrete?
\end{question}

A positive answer to the following questions would be a first step in understanding whether open yields (strongly) A$^*$-open for continuous surjective homomorphisms of topological groups.

\begin{question}
Does open imply the existence of a section which is a quasihomomorphism?
\end{question}

Proposition \ref{open+s-hom->strA*op} suggests to consider the following question.

\begin{question}
Is every open continuous surjective homomorphism A$^*$-open?
\end{question}

The next question arises from Propositions \ref{compositions..} and \ref{compositions...}.

\begin{question}\label{compositions....}
Let $f_1:(G,\tau)\to(K,\rho)$ and $f_2:(K,\rho)\to(H,\sigma)$ be continuous surjective homomorphisms.
\begin{itemize}
	\item[(a)]If $f_1$ is open and $f_2$ is a semitopological isomorphism, must $f=f_2\circ f_1$ be semitopological?
	\item[(b)]If $f_1$ is a semitopological isomorphism and $f_2$ is open, must $f=f_2\circ f_1$ be semitopological?
\end{itemize}
\end{question}

Moreover we remind here the open problems left in \cite{Ar} about semitopological isomorphisms, that we discuss in \cite{DG2}.

\begin{problem}\emph{\cite[Problem 13]{Ar}}
Let $\mathcal K$ be a class of topological groups. Find $(G,\tau)\in\mathcal K$ such that every semitopological isomorphism $f:(G,\tau)\to(H,\sigma)$, where $(H,\sigma) \in \mathcal K$, is open.
\end{problem}

\begin{problem}\emph{\cite[Problem 14]{Ar}}
Find groups $G$ such that for all group topologies $\tau$ on $G$ every semitopological isomorphism $f:(G,\tau)\to(H,\sigma)$, where $(H,\sigma)$ is another topological group, is open.
\end{problem}

\begin{problem}\emph{\cite[Problem 15]{Ar}}
\begin{itemize}
	\item Which are the continuous isomorphisms of topological groups that are compositions of semitopological isomorphisms?
	\item Is every continuous isomorphism of topological groups composition of semitopological isomorphisms?
\end{itemize}
\end{problem}

The same problems could be posed for semitopological and d-semitopological homomorphisms.

\subsubsection*{Acknowledgments}
I am grateful to Professor Dikran Dikranjan for his helpful comments and suggestions. Moreover I want to thank the referee for his/her suggestions.

\end{document}